\theoremstyle{plain}
\newtheorem{thm}{Theorem}[section]
\newtheorem{cor}{Corollary}[section]
\newtheorem{lem}{Lemma}[section]
\theoremstyle{remark}
\theoremstyle{definition}
\newtheorem{defn}{Definition}[section]
\newtheorem{exmp}{Example}[section]
\newtheorem{rem}{Remark}[section]
\newtheorem{asmpt}{Assumption}[section]
\title{Szeg\"{o} kernel equivariant asymptotics under Hamiltonian Lie group actions}
\author{Roberto Paoletti\footnote{\noindent{\bf Address:}
Dipartimento di Matematica e Applicazioni, Universit\`a degli Studi
di Milano-Bicocca, Via R. Cozzi 55, 20125 Milano,
Italy; {\bf e-mail}: roberto.paoletti@unimib.it }}
\date{}
\begin{document}
\maketitle

\begin{abstract}
Suppose that a compact and connected Lie group $G$
acts on a complex Hodge 
manifold $M$ in a holomorphic and Hamiltonian manner, and that the action linearizes
to a positive holomorphic line bundle $A$ on $M$. Then there is an induced unitary representation
on the associated Hardy space and, if the moment map of the action is nowhere vanishing,
the corresponding isotypical components are all finite dimensional. We study the asymptotic concentration
behavior of the corresponding equivariant Szeg\"{o} kernels near certain loci
defined by the moment map.
\end{abstract}

\section{Introduction}

Let $M$ be a connected complex $d$-dimensional projective manifold,
and $A$ an holomorphic ample line bundle on it. There exists an Hermitian metric
$h$ such that the unique covariant $\nabla$ derivative on $A$ that is compatible with both
the complex structure and the metric has curvature $\Theta=-2\,\imath\,\omega$, where
$\omega$ is a K\"{a}hler form on $M$. Thus the triple $(M,J,\omega)$ is a 
K\"{a}hler manifold, with associated Riemannian metric $\rho^M$ and volume form
$\mathrm{d}V_M:=\omega^{\wedge d}/d!$.

We shall denote by $A^\vee$ the dual line bundle of $A$, and by $X\subset A^\vee$
the unit circle bundle; thus $X=\partial D$, where $D\subset A^\vee$ is the unit disc bundle,
a strictly pseudoconvex domain. Then $\nabla$ determines a connection
$1$-form $\alpha$ on $X$. If $\pi:X\rightarrow M$ is the projection, then 
$\mathrm{d}V_X:=\frac{1}{•2\,\pi}\,\alpha\wedge \pi^*(\mathrm{d}V_M)$
is a volume form. 
Furthermore, there is on $X$ a natural choice of an $S^1$-invariant 
Riemannian metric $\rho^X$, determined by
the conditions that $\pi$ be a Riemannian submersion with $\ker (\alpha)$ as horizontal tangent
bundle, and that the fibers of $\pi$ have unit length.
Hence $|\mathrm{d}V_X|$ is the Riemannian density of $\rho^X$.
We shall denote by $\mathrm{dist}_X$ the Riemannian distance function of $\rho^X$.

If $H(X)\subset L^2(X)$ is the Hardy space, the orthogonal projector 
$\Pi:L^2(X)\rightarrow H(X)$ is known as the \textit{Szeg\"{o} projector}  of $X$, and its distributional kernel
$\Pi\in \mathcal{D}'(X\times X)$ as the \textit{Szeg\"{o} kernel} (\cite{bs}, \cite{z}).

Furthermore, let $G$ be a connected compact  
Lie group, with Lie algebra $\mathfrak{g}$ and
coalgebra $\mathfrak{g}^\vee$; 
we shall denote by $d_G$ and $r_G$, respectively, the dimension and the rank of
$G$. 
Let us assume that $\mu:G\times M\rightarrow M$ is a 
Hamiltonian and holomorphic action on $(M,J,2\,\omega)$,
with moment map $\Phi:M\rightarrow \mathfrak{g}^\vee$.

Then to every $\boldsymbol{\xi}\in \mathfrak{g}$
there is associated an Hamiltonian vector field $\boldsymbol{\xi}_M\in \mathfrak{X}_{ham}(M)$, which
canonically lifts to a contact and CR vector field
$\boldsymbol{\xi}_X\in \mathfrak{X}_{cont}(X)$, according to the law \cite{ko}
\begin{equation}
\label{eqn:contact}
\boldsymbol{\xi}_X:=\boldsymbol{\xi}_M^\sharp-\langle\Phi,\boldsymbol{\xi}\rangle\,\partial_\theta;
\end{equation}
here notation is as follows:
\begin{enumerate}
\item for any vector field $V$ on $M$, $V^\sharp$ denotes its its horizontal lift to $X$ 
with respect to $\alpha$ (and similarly for tangent vectors);
\item $\partial_\theta$ is the generator of the standard circle action on $X$ (fiber rotation). 
\end{enumerate}
In other words, the infinitesimal action of $\mathfrak{g}$ on $M$ lifts
to an infinitesimal contact action on $X$.

We shall make the stronger hypothesis that $\mu$ itself lifts to an action  
$\tilde{\mu}:G\times X\rightarrow X$, of which the correspondence $\boldsymbol{\xi}\mapsto \boldsymbol{\xi}_X$
is the differential.
Then $\tilde{\mu}$ is a contact and CR action, and naturally determines a unitary representation
of $G$ on $H(X)$. According to the Theorem of Peter and Weyl, 
there is an equivariant unitary Hilbert direct sum 
decomposition of $H(X)$ into isotypical components corresponding to the irreducible representations
(in the following, irreps) of $G$
\cite{st}.

For a given choice of a maximal torus 
$T\leqslant G$
and of a set $R^+$ of positive roots of $\mathfrak{g}$, the irreps of $G$ are
determined by the their maximal weights, which range in the set of dominant weights.
This sets up a bijective correspondence between the family $\hat{G}$ of irreps of
$G$ and a subset
$\mathcal{D}^G$ of the collection $\mathcal{D}$ of all dominant weights;
we have $\mathcal{D}=\mathcal{D}^G$ if $G$ is simply connected. We shall 
label the irrep with maximal weight $\boldsymbol{\lambda}
\in \mathcal{D}^G$ by the regular\footnote{i.e., belonging to the open positive Weyl chamber} 
half-weight $\boldsymbol{\nu}=\boldsymbol{\lambda}+\boldsymbol{\delta}$,
where $\boldsymbol{\delta}:=2^{-1}\,\sum_{\beta\in R^+}\beta$. 
We shall denote by $V_{\boldsymbol{\nu}}$ the irrep corresponding to 
$\boldsymbol{\nu}$, and by $\chi_{\boldsymbol{\nu}}:G\rightarrow \mathbb{C}$ the corresponding character. 
This labeling is consistent with the philosophy of the Kirillov character formula \cite{ki}, that
we shall recall in the course of the paper. 
Furthermore, let us set
$d_{\boldsymbol{\nu}}:=\dim (V_{\boldsymbol{\nu}})$; if 
$\varphi$ denotes any Euclidean scalar product on $\mathfrak{g}^\vee$
which is invariant uder the coadjoint action then, by the Weyl dimension formula
(see, e.g., \S 1 of \cite{su}, \S 2.5 of \cite{v1}),
\begin{equation}
\label{eqn:weyldf}
d_{\boldsymbol{\nu}}=\prod_{\boldsymbol{\beta}\in R^+}\frac{\varphi (\boldsymbol{\nu},\boldsymbol{\beta})}{\varphi (\boldsymbol{\delta},\boldsymbol{\beta})}.
\end{equation}
In particular, $d_{\boldsymbol{k\,\nu}}=k^{\frac{1}{2}\,( d_G-r_G)}\,d_{\boldsymbol{\nu}}$.
Thus, if we set
$\mathcal{E}^G:=\mathcal{D}^G+\boldsymbol{\delta}$, we have 

\begin{equation}
\label{eqn:pw}
H(X)=\bigoplus_{\boldsymbol{\nu}\in \mathcal{E}^G}H(X)^{\tilde{\mu}}_{\boldsymbol{\nu}}
\end{equation}
where $H(X)^{\tilde{\mu}}_{\boldsymbol{\nu}}$ is the isotypical component corresponding to $V_{\boldsymbol{\nu}}$.
For each $\boldsymbol{\nu}\in \mathcal{E}^G$ we have the associated equivariant Szeg\"{o} projector
$\Pi_{\boldsymbol{\nu}}^{\tilde{\mu}}:L^2(X)\rightarrow H(X)^{\tilde{\mu}}_{\boldsymbol{\nu}}$.

In general, $H(X)^{\tilde{\mu}}_{\boldsymbol{\nu}}$ may well be infinite-dimensional, and
does not correspond to a space of holomorphic sections of any tensor power of $A$. Nonetheless,
it follows
from the theory of \cite{gs2} that if $\mathbf{0}\not\in \Phi(M)$, then
$\dim  H(X)^{\tilde{\mu}}_{\boldsymbol{\nu}} <+\infty$ for every $\boldsymbol{\nu}$
(see \S 2 \cite{pao-IJM}). Thus $\Pi_{\boldsymbol{\nu}}^{\tilde{\mu}}$ is a smoothing operator, so that 
its distributional kernel $\Pi_{\boldsymbol{\nu}}^{\tilde{\mu}}\in \mathcal{C}^\infty(
X\times X)$.

We are interested in the local asymptotics of 
$\Pi_{k\,\boldsymbol{\nu}}^{\tilde{\mu}}$ for a fixed $\boldsymbol{\nu}\in \mathcal{E}^G$ 
and $k\rightarrow+\infty$
with $k\,\boldsymbol{\nu}\in \mathcal{E}^G$. The latter condition is satisfied for any $k$ if
$\boldsymbol{\delta}\in \mathcal{D}^G$, so that 
$\mathcal{E}^G\subset \mathcal{D}^G$, as is the case when $G$ is simply connected. 

This general theme has already been studied in specific cases (\cite{pao-IJM}, \cite{pao-loa}, 
\cite{ca}, \cite{gp1}, \cite{gp2}),
building on the approach developed in \cite{z}, \cite{bsz} and \cite{sz} to the basic Fourier
case where $G=S^1$, $\mu$ is trivial 
and $\Phi=\imath$. We refer the reader to the introductions of 
\cite{pao-IJM}, \cite{pao-loa}, 
\cite{gp1}, \cite{gp2} for an ampler discussion of motivation and general framing.
The theme is geometrically relevant, being related to
interesting geometric quotients \cite{pao-polorb}.

The results in \cite{gp1} and \cite{gp2} are based on the pairing of the Weyl character and 
integration formulae with the techniques in \cite{z} and \cite{sz}.
The new ingredient here is the Kirillov character formula (\cite{ki}, \cite{r}) which considerably simplifies 
some of the
arguments, and allows to deal with more general Lie groups;
on the other hand, it forces restrictions on the stabilizer subgroups.

The following results are governed by the interplay between 
$\Phi$ and the cone over the coadjoint orbit through $\boldsymbol{\nu}$,
$\mathcal{O}_{\boldsymbol{\nu}}\subset \mathfrak{g}^\vee$. As $\boldsymbol{\nu}$ is 
a regular element of $\mathfrak{g}^\vee$, 
$\mathcal{O}_{\boldsymbol{\nu}}$ is equivariantly diffeomorphic to 
$G/T$, hence it  has dimension $d_G-r_G$. 
Let us set $\mathcal{C}( \mathcal{O}_{\boldsymbol{\nu}}):=\mathbb{R}_+\cdot 
\mathcal{O}_{\boldsymbol{\nu}}$.

We shall need the following hypothesis.
 
\begin{asmpt}
\label{asmpt:basic1}
We shall assume that:
\begin{enumerate}
\item $\mathbf{0}\not\in \Phi(m)$;
\item $\Phi$ is transverse to $\mathcal{C}( \mathcal{O}_{\boldsymbol{\nu}})$
(equivalently, $\Phi$ is transverse to the ray $\mathbb{R}_+\cdot \boldsymbol{\nu}$);
\item $M_{\mathcal{O}_{\boldsymbol{\nu}}}
:=\Phi^{-1}\big( \mathcal{C}( \mathcal{O}_{\boldsymbol{\nu}})\big)\neq \emptyset$.

\end{enumerate}

\end{asmpt}

If Assumption \ref{asmpt:basic1} holds, $M_{\mathcal{O}_{\boldsymbol{\nu}}}$ is a compact and 
connected $G$-invariant submanifold of
$M$, of (real) codimension $r_G-1$ (see the discussion of \cite{gp1}).
Let us set
$X_{\mathcal{O}_{\boldsymbol{\nu}}}:=\pi^{-1}(M_{\mathcal{O}_{\boldsymbol{\nu}}})$.

We shall also make the following assumption on the compact and connected Lie group $G$.

\begin{asmpt}
\label{asmpt:basic2}
Let $L(G)\subset \mathfrak{t}^\vee$ be the lattice of integral forms
on $G$; then
$\boldsymbol{\delta}\in L(G)$. 
\end{asmpt}

This condition is satisfied
if $G$ is either $U(n)$ for some $n\ge 1$, or 
a connected and simply connected compact semisimple Lie group.
If $G$ satisfies this assumption, then it is called \textit{acceptable}
in Harish-Chandra's terminology
(\S 2.5 of \cite{v1}).
Under Assumption \ref{asmpt:basic2}, $\mathcal{E}^G\subset \mathcal{D}^G$.

In the following we shall assume throughout that 
Assumptions \ref{asmpt:basic1} and \ref{asmpt:basic2}
hold.

\begin{thm}
\label{thm:rapid decrease resc}
Suppose that
$\mathcal{O}_{\boldsymbol{\nu}}\cap \mathfrak{t}^0=\emptyset$.
Fix $C,\,\epsilon>0$. Then, uniformly for
$\mathrm{dist}_X\left(G\cdot x,G\cdot y  \right)\ge C\,k^{\epsilon-\frac{1}{2}}$,
we have
$$
\Pi^{\tilde{\mu}}_{k\,\boldsymbol{\nu}}(x,y)=O\left(k^{-\infty}\right).
$$
\end{thm}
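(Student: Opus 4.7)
The plan is to express $\Pi^{\tilde{\mu}}_{k\boldsymbol{\nu}}(x,y)$ as an oscillatory integral with large parameter $k$ by combining the Boutet de Monvel-Sj\"ostrand parametrix for the Szeg\"{o} kernel with the Kirillov character formula, and then to extract the decay from the positivity of $\mathrm{Im}\,\psi$. Starting from the character identity
$$\Pi^{\tilde{\mu}}_{k\boldsymbol{\nu}}(x,y) \;=\; d_{k\boldsymbol{\nu}}\int_G \overline{\chi_{k\boldsymbol{\nu}}(g)}\, \Pi\bigl(\tilde{\mu}_{g^{-1}}(x),\, y\bigr)\,\mathrm{d}g,$$
I would replace $\Pi(x',y)$ by its FIO parametrix $\int_0^{+\infty} e^{it\,\psi(x',y)}\,s(x',y,t)\,\mathrm{d}t$ modulo $C^\infty(X\times X)$, where $\mathrm{Im}\,\psi(x',y) \ge c\,\mathrm{dist}_X(x',y)^2$ near the diagonal, and $\chi_{k\boldsymbol{\nu}}(\exp\boldsymbol{\xi})$ by the rescaled Kirillov representation
$$\chi_{k\boldsymbol{\nu}}(\exp\boldsymbol{\xi}) \;=\; \frac{k^{(d_G-r_G)/2}}{j(\boldsymbol{\xi})}\int_{\mathcal{O}_{\boldsymbol{\nu}}} e^{ik\,\langle\boldsymbol{\eta},\boldsymbol{\xi}\rangle}\,\mathrm{d}\boldsymbol{\eta},$$
with $j$ the smooth invertible Jacobian arising from the Duistermaat-Heckman-type formula. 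After the change of variables $t=ku$, these two pieces combine into an oscillatory integral for $\Pi^{\tilde{\mu}}_{k\boldsymbol{\nu}}(x,y)$ (up to an $O(k^{-\infty})$ remainder) with phase
$$\Psi_{x,y}(\boldsymbol{\xi},\boldsymbol{\eta},u)\;=\;u\,\psi\bigl(\tilde{\mu}_{\exp(-\boldsymbol{\xi})}(x),\, y\bigr)\;-\;\langle\boldsymbol{\eta},\boldsymbol{\xi}\rangle,$$
and amplitude polynomially bounded in $k$, supported near $\boldsymbol{\xi}=0$.

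The hypothesis $\mathrm{dist}_X(G\cdot x,\,G\cdot y) \ge C\,k^{\epsilon-1/2}$ translates into the uniform lower bound $\mathrm{dist}_X\bigl(\tilde{\mu}_{\exp(-\boldsymbol{\xi})}(x),\, y\bigr) \ge C\,k^{\epsilon-1/2}$, whence $\mathrm{Im}\,\psi \gtrsim k^{2\epsilon-1}$. I would then split the $u$-integral at a fixed threshold $\delta>0$. On $\{u\ge\delta\}$ the exponential damping
$$|e^{ik\Psi_{x,y}}| \;\le\; e^{-c\,u\,k^{2\epsilon}} \;\le\; e^{-c\delta\, k^{2\epsilon}}$$
overwhelms the polynomial prefactors collected from $d_{k\boldsymbol{\nu}}\sim k^{(d_G-r_G)/2}$, from $k^{(d_G-r_G)/2}$ in Kirillov, and from the Jacobian of the $t$-rescaling. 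On $\{u\in[0,\delta]\}$ the leading term of $\partial_{\boldsymbol{\xi}}\Psi_{x,y}$ is $-\boldsymbol{\eta}$; since $\mathcal{O}_{\boldsymbol{\nu}}$ is a compact subset of $\mathfrak{g}^\vee\setminus\{\mathbf{0}\}$ bounded away from the origin, one has $|\partial_{\boldsymbol{\xi}}\Psi_{x,y}|\ge c>0$ once $\delta$ is small enough. Repeated integration by parts in $\boldsymbol{\xi}$ then produces a factor $k^{-1}$ at each step, yielding $O(k^{-\infty})$ on this region as well.

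The main technical obstacle is the transition from the Kirillov formula, which is intrinsically local near the identity of $G$, to the global $G$-integration. The condition $\mathcal{O}_{\boldsymbol{\nu}}\cap \mathfrak{t}^0=\emptyset$ is the pivot here: it guarantees that on charts covering the regular part of $G$ away from $e$, the phase $\langle\boldsymbol{\eta},\boldsymbol{\xi}\rangle$ has nowhere-vanishing gradient in the orbital variable $\boldsymbol{\eta}$, so non-stationary phase on $\mathcal{O}_{\boldsymbol{\nu}}$ produces $O(k^{-\infty})$ contributions from those pieces. Combining this with an invariant partition of unity on $G$ built from conjugation-translates of Kirillov confines the genuinely oscillatory analysis to a neighborhood of $e$ and delivers the claimed rapid decay uniformly in $(x,y)$ with $\mathrm{dist}_X(G\cdot x,\,G\cdot y)\ge Ck^{\epsilon-1/2}$.
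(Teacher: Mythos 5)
Your proposal is a genuinely different route from the paper's: for Theorem \ref{thm:rapid decrease resc} the paper deliberately uses the \textbf{Weyl integration and character formulas} (Theorems \ref{thm:weyl if}--\ref{thm:weyl cf}), not Kirillov. Kirillov is reserved for Theorems \ref{thm:rapid decrease off locus resc} and \ref{thm:scal asymp}, both of which carry an explicit freeness assumption. That distinction is not cosmetic, and your attempt to bridge it contains a real gap.

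The Kirillov formula \eqref{eqn:kirillovcf} represents $\chi_{k\boldsymbol{\nu}}(\exp\boldsymbol{\xi})$ only for $\boldsymbol{\xi}$ near $\mathbf{0}$, i.e.\ $g$ near $1_G$. In the paper's Kirillov-based proof (of the \emph{other} rapid-decrease theorem) the cut-off $\rho'_x$ is supported near $1_G$ precisely because $\tilde\mu$ is assumed free along $X_{\mathcal{O}_{\boldsymbol{\nu}}}$; under Assumption \ref{asmpt:basic1} alone the action is only \emph{locally} free, so the set $\{g : \tilde\mu_{g^{-1}}(x) \text{ near } y\}$ can contain elements far from $1_G$ (stabilizer elements, or near-stabilizer elements). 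For those $g$ the exponential map does not cover them by a single chart at $\mathbf 0$, the Kirillov formula does not apply, and $\mathrm{Im}\,\psi$ gives no damping since $\tilde\mu_{g^{-1}}(x)$ can be close to $y$. Your remedy --- ``an invariant partition of unity on $G$ built from conjugation-translates of Kirillov'' combined with non-stationary phase in the orbital variable $\boldsymbol{\eta}$ --- is not a valid fix: the $\boldsymbol{\eta}$-derivative of $\langle\boldsymbol{\eta},\boldsymbol{\xi}\rangle$ is $\boldsymbol{\xi}$ restricted to $T_{\boldsymbol{\eta}}\mathcal{O}_{\boldsymbol{\nu}}$, which vanishes exactly when $\boldsymbol{\xi}$ lies in the isotropy algebra of $\boldsymbol{\eta}$, a condition unrelated to $\mathcal{O}_{\boldsymbol{\nu}}\cap\mathfrak{t}^0=\emptyset$. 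You have therefore misidentified where the hypothesis enters. In the paper it is used at a completely different spot: after the Weyl reduction the phase is differentiated in the torus variable $\boldsymbol{\vartheta}\in\mathfrak{t}$, giving \eqref{eqn:theta derivative}, $\partial_{\boldsymbol{\vartheta}}\Gamma \sim u\,\mathrm{Ad}_{g^{-1}}\Phi(m_y)|_{\mathfrak{t}}-\boldsymbol{\nu}$, and the hypothesis guarantees a lower bound $\|\Phi^T(m)\|\ge r_0>0$ on $M_{\mathcal{O}_{\boldsymbol{\nu}}}$, which is what makes this derivative large for $u\gg 1$ (yielding Lemma \ref{lem:cmpt supp u}). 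In your scheme the differentiation is in all of $\mathfrak{g}$ and $\boldsymbol{\eta}$ ranges over the whole orbit, so the hypothesis plays no role in your small-$u$ bound --- a warning sign that your argument, if it did close, would prove more than is stated, yet without closing the large-$g$ gap.

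Concretely: the Weyl integration formula is global on $G$, so the paper can handle the entire $G$-integral in one stroke regardless of stabilizers; Kirillov is local near $1_G$ and, absent a freeness assumption, cannot. Once $u$ is shown to be compactly supported, the final integration by parts in $\mathrm{d}u$ using $|\mathrm{Im}\,\psi|\gtrsim k^{2\epsilon-1}$ (hence each step gains $k^{-2\epsilon}$ on a phase scaled by $k$) is the same in both approaches, and this part of your argument is sound. The missing piece is a valid representation of the character on neighborhoods of non-identity stabilizer elements, which your proposal does not supply.
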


When
$G=U(n)$, the previous hypothesis is satisfied by any $\boldsymbol{\nu}\in \mathcal{E}^G$
with $\sum_{j=1}^n\nu_j\neq 0$.
It is never satisfied when $G=SU(2)$,
but the statement of Theorem \ref{thm:rapid decrease resc} is 
nonetheless true in this case, 
see \cite{gp2}.
More generally, let $\mu^T:T\times M\rightarrow M$ be the restriction of $\mu$, and
let $\Phi^T:M\rightarrow \mathfrak{t}^\vee$ be the moment map induced by
$\Phi$ (that is, the composition of $\Phi$ with the restriction
$\mathfrak{g}^\vee \rightarrow \mathfrak{t}^\vee$).
If $\mathbf{0}\not\in \Phi^T(M)$, then the hypothesis of Theorem 
\ref{thm:rapid decrease resc}
is satisfied for any $\boldsymbol{\nu}$ such that
$M_{\mathcal{O}_{\boldsymbol{\nu}}}\neq \emptyset$.

\begin{thm}
\label{thm:rapid decrease off locus resc}
Let us fix $C,\,\epsilon>0$, and assume that $\tilde{\mu}$ is free along
$X_{\mathcal{O}}$. Then, uniformly for 
$$
\max\left\{\mathrm{dist}_X(x,X_{\mathcal{O}_{\boldsymbol{\nu}}}),\mathrm{dist}_X(y,X_{\mathcal{O}_{\boldsymbol{\nu}}})\right\}
\ge C\,k^{\epsilon-\frac{1}{2}},
$$
we have 
$$
\Pi^{\tilde{\mu}}_{k\,\boldsymbol{\nu}}(x,y)=O\left(k^{-\infty}\right).
$$
If $\tilde{\mu}$ is only generically free along $X_{\mathcal{O}}$, and 
$X'\subset X$ is the open subset where it is free, the same estimate holds 
uniformly on compact subsets of $X'$.
\end{thm}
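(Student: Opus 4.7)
The plan is to adapt the oscillatory integral approach presumably developed for Theorem \ref{thm:rapid decrease resc}, this time isolating as critical locus $X_{\mathcal{O}_{\boldsymbol{\nu}}}\times X_{\mathcal{O}_{\boldsymbol{\nu}}}$ instead of the $G$-orbit diagonal, and then exploiting the distance hypothesis by a quantitative non-stationary phase argument. I start from the Peter--Weyl representation
\begin{equation*}
\Pi^{\tilde{\mu}}_{k\,\boldsymbol{\nu}}(x,y)=d_{k\,\boldsymbol{\nu}}\int_G \overline{\chi_{k\,\boldsymbol{\nu}}(g)}\,\Pi\bigl(\tilde{\mu}_{g^{-1}}(x),y\bigr)\,\mathrm{d}g,
\end{equation*}
localize $g$ to a small neighborhood of the identity (the complement contributing $O(k^{-\infty})$ by arguments already used for Theorem \ref{thm:rapid decrease resc}), write $g=\exp(\boldsymbol{\xi})$, and apply Kirillov's formula
\begin{equation*}
\chi_{k\,\boldsymbol{\nu}}(\exp\boldsymbol{\xi})=j^{-1/2}(\boldsymbol{\xi})\int_{\mathcal{O}_{k\,\boldsymbol{\nu}}}e^{\imath\,\langle\beta,\boldsymbol{\xi}\rangle}\,\mathrm{d}\sigma(\beta).
\end{equation*}
Rescaling $\beta=k\,\beta'$ with $\beta'\in \mathcal{O}_{\boldsymbol{\nu}}$, inserting the Boutet de Monvel--Sj\"ostrand parametrix $\Pi(x',y)=\int_0^\infty e^{\imath t\,\psi(x',y)}\,s(t,x',y)\,\mathrm{d}t$, and rescaling $t=k\,u$, one obtains an oscillatory integral in $(u,\boldsymbol{\xi},\beta')$ with large parameter $k$ and total phase
\begin{equation*}
\Psi(u,\boldsymbol{\xi},\beta')=u\,\psi\bigl(\tilde{\mu}_{\exp(-\boldsymbol{\xi})}(x),y\bigr)-\langle \beta',\boldsymbol{\xi}\rangle.
\end{equation*}

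Next, I would analyze the critical locus of $\Psi$ in the complex sense of Melin--Sj\"ostrand. Real-stationarity in $u$ forces $\tilde{\mu}_{\exp(-\boldsymbol{\xi})}(x)=y$; the $\boldsymbol{\xi}$-derivative at $\boldsymbol{\xi}=0$, computed via the fundamental identities $\alpha(\boldsymbol{\xi}_X)=-\langle\Phi,\boldsymbol{\xi}\rangle$ from (\ref{eqn:contact}) and $\mathrm{d}_{x'}\psi(x',x')=-\alpha_{x'}$, yields $\beta'=u\,\Phi(\pi(x))$ (up to sign conventions). Since $\beta'$ is constrained to lie on $\mathcal{O}_{\boldsymbol{\nu}}$, this forces $\Phi(\pi(x))\in \mathcal{C}(\mathcal{O}_{\boldsymbol{\nu}})$, i.e., $x\in X_{\mathcal{O}_{\boldsymbol{\nu}}}$, and symmetrically $y\in X_{\mathcal{O}_{\boldsymbol{\nu}}}$. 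The freeness hypothesis along $X_{\mathcal{O}_{\boldsymbol{\nu}}}$ enters precisely here to ensure that $\boldsymbol{\xi}\mapsto \boldsymbol{\xi}_X(x)$ is injective and that the $\boldsymbol{\xi}$-Hessian of $\Psi$ is transversally non-degenerate along the critical set, making the latter clean.

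Finally, the hypothesis $\max\{\mathrm{dist}_X(x,X_{\mathcal{O}_{\boldsymbol{\nu}}}),\mathrm{dist}_X(y,X_{\mathcal{O}_{\boldsymbol{\nu}}})\}\ge C\,k^{\epsilon-1/2}$ must be converted into a lower bound $|\nabla\Psi|\gtrsim k^{\epsilon-1/2}$ on the effective support of the amplitude; repeated integration by parts with the operator $(\imath k)^{-1}\,|\nabla\Psi|^{-2}\,\langle\overline{\nabla\Psi},\nabla\rangle$ then produces a factor $k^{-\epsilon}$ per pass, yielding the asserted $O(k^{-\infty})$. For the generically free variant one inserts a smooth cut-off supported in $X'$, and the preceding analysis applies verbatim, with uniformity on compact subsets of $X'$. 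The main technical obstacle is making the gradient bound uniform simultaneously in $\beta'\in \mathcal{O}_{\boldsymbol{\nu}}$ and in $\boldsymbol{\xi}$ throughout a shrinking neighborhood of the origin: the cancellation between the $\psi$-component and the Kirillov component of $\nabla\Psi$ must be controlled separately along directions parallel to and transverse to $X_{\mathcal{O}_{\boldsymbol{\nu}}}$, which is where Taylor expansion around $\boldsymbol{\xi}=0$ and almost-analytic extensions of $\psi$ become indispensable.
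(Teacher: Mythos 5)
Your overall framework—Peter--Weyl representation, localization near the identity of $G$, Kirillov character formula, Boutet de Monvel--Sj\"ostrand parametrix, rescaling, then non-stationary phase—is the correct one and matches the paper's approach. But there is a genuine gap at exactly the point you flag as the ``main technical obstacle'': you never actually establish the gradient lower bound $|\nabla\Psi|\gtrsim k^{\epsilon-1/2}$ uniformly in $(\boldsymbol{\xi},\beta',u)$, and the remarks about almost-analytic extensions, Melin--Sj\"ostrand clean critical loci, and transversal non-degeneracy of the $\boldsymbol{\xi}$-Hessian are a detour—this is purely a non-stationary phase estimate and no stationary-phase geometry is needed.

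The paper resolves the obstacle with three steps you did not use. First, by Cauchy--Schwarz, $|\Pi^{\tilde{\mu}}_{k\boldsymbol{\nu}}(x,y)|\le \Pi^{\tilde{\mu}}_{k\boldsymbol{\nu}}(x,x)^{1/2}\Pi^{\tilde{\mu}}_{k\boldsymbol{\nu}}(y,y)^{1/2}$ together with an \emph{a priori} polynomial bound on the diagonal reduces everything to $x=y$, eliminating the cross-term interactions between $x$ and $y$ in the phase that you are worried about. Second, there is a two-stage control on $\boldsymbol{\xi}$: a dyadic cut-off $\gamma_k(\boldsymbol{\xi})=\gamma(k^{1/2-\epsilon}\boldsymbol{\xi})$ localizes to $\|\boldsymbol{\xi}\|\lesssim k^{\epsilon-1/2}$, and on the complementary region $\|\boldsymbol{\xi}\|\ge k^{\epsilon-1/2}$ one uses $\Im\psi\ge D_X\,\mathrm{dist}_X(\cdot,\cdot)^2$ together with freeness (Lemma \ref{lem:positive distance resc}) to bound $|\partial_u\Gamma_x|\gtrsim k^{2\epsilon-1}$ and integrate by parts in $u$; the freeness assumption enters here, to control the isotropy, not via Hessian non-degeneracy. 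Third, after the rescaling $\boldsymbol{\xi}\mapsto\boldsymbol{\xi}/\sqrt{k}$, the $\boldsymbol{\xi}$-gradient of the reduced phase $\Upsilon_x(u,\boldsymbol{\xi},\boldsymbol{\lambda})=\langle u\,\Phi(m_x)-\boldsymbol{\lambda},\boldsymbol{\xi}\rangle$ equals $u\,\Phi(m_x)-\boldsymbol{\lambda}$, and the \emph{transversality} hypothesis in Assumption \ref{asmpt:basic1} gives the uniform estimate
$$
\|\Phi(m)-\boldsymbol{\lambda}\|\ge s_0\,\mathrm{dist}_M(m,M_{\mathcal{O}_{\boldsymbol{\nu}}})\qquad\forall\,m\in M,\ \forall\,\boldsymbol{\lambda}\in\mathcal{C}(\mathcal{O}_{\boldsymbol{\nu}}),
$$
which combined with the distance hypothesis yields $\|\partial_{\boldsymbol{\xi}}\Upsilon_x\|\ge (s_0 C/D)\,k^{\epsilon-1/2}$ uniformly in $u\in(1/D,D)$ and $\boldsymbol{\lambda}\in\mathcal{O}_{\boldsymbol{\nu}}$. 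Each integration by parts in $\mathrm{d}\boldsymbol{\xi}$ then gains $O(k^{-\epsilon})$. Without the Cauchy--Schwarz reduction and without invoking the transversality inequality, your argument does not close; these are the missing ingredients.

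Two smaller remarks: your factorization $\chi_{k\boldsymbol{\nu}}(\exp\boldsymbol{\xi})=j^{-1/2}(\boldsymbol{\xi})\int_{\mathcal{O}_{k\boldsymbol{\nu}}}e^{\imath\langle\beta,\boldsymbol{\xi}\rangle}\mathrm{d}\sigma$ is equivalent to (\ref{eqn:kirillovcfk}) after rescaling, so that part is fine. The sign in $\mathrm{d}_{(x,x)}\psi=(\alpha_x,-\alpha_x)$ from (\ref{eqn:diagodiffpsi}) is the one used in the paper (not $-\alpha_{x'}$ on the first slot), though this is only a convention and does not affect the argument.
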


We shall focus on the near diagonal asymptotics of $\Pi^{\tilde{\mu}}_{k\,\boldsymbol{\nu}}(x,x)$
for $x$ belonging to a shrinking tubular neighborhood of $X_{\mathcal{O}_{\boldsymbol{\nu}}}$, of radius
$O\left(k^{\epsilon-1/2}\right)$. Using the normal exponential map, we may parametrize
such a neighborhood by a neighborhood of the zero section in the normal bundle of
$X_{\mathcal{O}_{\boldsymbol{\nu}}}\subset X$, which is the pull-back of the normal bundle 
$N(M_{\mathcal{O}_{\boldsymbol{\nu}}}/M)$ of
$M_{\mathcal{O}_{\boldsymbol{\nu}}}\subset M$.
If $V_{\mathcal{O}_{\boldsymbol{\nu}}}\subset X$ is a sufficiently small neighborhood of 
$X_{\mathcal{O}_{\boldsymbol{\nu}}}$, we shall accordingly write the general $y\in V_{\mathcal{O}_{\boldsymbol{\nu}}}$ in additive notation as $y=x+\mathbf{v}$, for unique
$x\in X_{\mathcal{O}_{\boldsymbol{\nu}}}$ and $\mathbf{v}\in 
N_{\pi (x)}(M_{\mathcal{O}_{\boldsymbol{\nu}}}/M)$\footnote{We may interpret $x+\mathbf{v}$ in terms of a system of 
Hesenberg local coordinates on $X$ centered at $x$ \cite{sz}, smoothly varying with $x$, see
\S \ref{sctn:proofrapiddec}.}.

In order to state the next Theorem, some further notation is needed. 

\begin{defn}
\label{defn:leading order coeff}
Let $\varphi$ be an $\mathrm{Ad}$-invariant Euclidean product on $\mathfrak{g}$, 
with associated norm $\|\cdot\|_\varphi$.
Let us also denote by $\varphi$ the induced bi-invariant Riemannian metric on $G$.
Clearly, $\varphi$ restricts to an invariant Riemannian metric $\varphi^T$ on $T$.
We shall adopt the following notation:

\begin{enumerate}
\item $\mathrm{d}^\varphi V_G$: the Riemannian density on $G$ associated to $\varphi$;
\item $\mathrm{vol}^\varphi (G)=\int_G\,\mathrm{d}^\varphi V_G(g)$; 
\item $\mathrm{d}^\varphi V_T$: the Riemannian density on $T$ 
\item $\mathrm{vol}^\varphi (T):=\int_T\,\mathrm{d}^\varphi V_T(t)$.
\item For any $\boldsymbol{\gamma}\in \mathfrak{g}^\vee$,
\begin{itemize}
\item $\boldsymbol{\gamma}^\varphi\in \mathfrak{g}$ is uniquely
determined by the condition $\boldsymbol{\gamma}=\varphi\big(\boldsymbol{\gamma}^\varphi,\cdot)$;
\item $\|\boldsymbol{\gamma}\|_\varphi:=\|\boldsymbol{\gamma}^\varphi\|_\varphi$;
\item $\boldsymbol{\gamma}_{\varphi,u}:=\frac{1}{•\|\boldsymbol{\gamma}^{\varphi}\|_\varphi}\,
\boldsymbol{\gamma}
=\frac{1}{•\|\boldsymbol{\gamma}\|_\varphi}\,
\boldsymbol{\gamma}
\in \mathfrak{g}^\vee$;
\item $\boldsymbol{\gamma}^{\varphi}_{u}:=\frac{1}{•\|\boldsymbol{\gamma}^{\varphi}\|_\varphi}\,
\boldsymbol{\gamma}^{\varphi}\in \mathfrak{g}$.
\end{itemize} 
\item $\mathfrak{t}^{\perp_{\varphi}}\subseteq \mathfrak{g}$: the Euclidean orthocomplement of
$\mathfrak{t}$ with respect to $\varphi$.
\item  For any $\boldsymbol{\tau}\in \mathfrak{t}$, 
$S_{\boldsymbol{\tau}}:\mathfrak{t}^{\perp_\varphi}\rightarrow \mathfrak{t}^{\perp_\varphi}$
denotes the restriction of $\mathrm{ad}_{\boldsymbol{\tau}}$;
when $\boldsymbol{\tau}$ is \textit{regular} 
(as is the case when $\boldsymbol{\tau}=\boldsymbol{\nu}^\varphi$ for
$\boldsymbol{\nu}\in \mathcal{E}^G$),
$S_{\boldsymbol{\tau}}$ is a linear automorphism, skew-symmetric with respect to
the restriction of $\varphi$.
\end{enumerate}
\end{defn}

Let us identify the coalgebra of $T$, $\mathfrak{t}^\vee$, with the subspace
of those $\boldsymbol{\lambda}\in \mathfrak{g}^\vee$ fixed by $T$ under the coadjoint action,
and let $\mathfrak{t}^\vee_{reg}\subset \mathfrak{t}^\vee$ be the open and dense subset
of those elements, called regular, that are fixed precisely by $T$.
Hence $\mathcal{E}^G\subset \mathfrak{t}^\vee_{reg}$.

By definition, for any $m\in M_{\mathcal{O}_{\boldsymbol{\nu}}}$
there exist $h_m\,T\in G/T$ and $\varsigma (m)>0$ such that
\begin{equation}
\label{eqn:hmzetam}
\Phi (m)=\varsigma (m)\,\mathrm{Coad}_{h_m}(\boldsymbol{\nu})\in 
\mathrm{Coad}_{h_m}(\mathfrak{t}^\vee).
\end{equation}
Since $\boldsymbol{\nu}$ is regular, $h_m\,T$ and $\varsigma (m)$ are uniquely
determined, and the functions 
$m\in M_{\mathcal{O}_{\boldsymbol{\nu}}}\mapsto h_m\,T\in G/T$ and
$\varsigma: M_{\mathcal{O}_{\boldsymbol{\nu}}}\rightarrow \mathbb{R}$
are smooth.

For any non-zero $\boldsymbol{\lambda}\in \mathfrak{g}^\vee$, let 
$\boldsymbol{\lambda}^0=\left( \boldsymbol{\lambda}^\varphi  \right)^{\perp_\varphi}
\subset \mathfrak{g}$ be its annihilator hyperplane.
Then
\begin{equation}
\label{eqn:annihilator phim}
\Phi(m)^0=\mathrm{Ad}_{h_m}\left(\boldsymbol{\nu}^0\right).
\end{equation}
Let us set $\mathfrak{t}_{\boldsymbol{\nu}}:=\mathfrak{t}\cap \boldsymbol{\nu}^0$,
so that we have a $\varphi$-orthogonal direct sum decomposition
$$
\mathfrak{t}=\mathrm{span}(\boldsymbol{\nu}^\varphi)\oplus \mathfrak{t}_{\boldsymbol{\nu}}.
$$
For every $m\in M_{\mathcal{O}_{\boldsymbol{\nu}}}$, we shall set
$\mathfrak{t}_m:=\mathrm{Ad}_{h_m}(\mathfrak{t})$; thus
$\mathfrak{t}_m$ is the unique Cartan subalgebra of $\mathfrak{g}$
containing $\Phi(m)$, or equivalently the Lie subalgebra of the (unique) maximal
torus $T_m=h_m\,T\,h_m^{-1}$ stabilizing $\Phi(m)$. Thus,
\begin{equation}
\label{eqn:t_m characterized}
\mathfrak{t}_m=\left\{ \boldsymbol{\eta}\in 
\mathfrak{g}:\,\left[\boldsymbol{\eta},\Phi(m)^\varphi\right]=0\right\}.
\end{equation}
Furthermore, we shall set
$\mathfrak{t}_m':=\mathrm{Ad}_{h_m}(\mathfrak{t}_{\boldsymbol{\nu}})=
\mathfrak{t}_m\cap \Phi (m)^0$. More explicitly,
\begin{equation}
\label{eqn:t_m' characterized}
\mathfrak{t}_m'=\left\{ \boldsymbol{\eta}\in 
\mathfrak{g}:\,\left[\boldsymbol{\eta},\Phi(m)^\varphi\right]=0,
\,\langle\Phi(m),\boldsymbol{\eta}\rangle =0\right\}.
\end{equation}
Hence we have the $\varphi$-orthogonal direct sum
\begin{equation}
\label{eqn:tprime m}
\mathfrak{t}_m=\mathrm{span}\big(\Phi(m)^\varphi)\oplus \mathfrak{t}_m'.
\end{equation}

Assume $m\in M_{\mathcal{O}_{\boldsymbol{\nu}}}$. 
Then $\mathrm{val}_m$ is injective on $\Phi(m)^0$ by Remark \ref{rem:equiv transv lf};
hence $\mathrm{val}_m$ is injective \textit{a fortiori} on $\mathfrak{t}_m'$.

\begin{defn}
\label{defn:Dphim}
In the following, $m\in M_{\mathcal{O}_{\boldsymbol{\nu}}}$. 
We shall adopt the following notation.
\begin{enumerate} 
\item $\mathrm{val}_m^*(\rho^M_m)$: the pull-back to
$\mathfrak{g}$ of the Euclidear product $\rho^M_m=\omega_m(\cdot,J_m\cdot)$ on $T_mM$;
$\rho'_m$: the restriction of 
$\mathrm{val}_m^*(\rho^M_m)$
to $\mathfrak{t}_m'$. Thus $\rho'_m$ is
non-degenerate (whence positive definite).

\item Given an arbitrary orthonormal basis $\mathcal{R}_m$ of 
$\mathfrak{t}_m'$ for the restriction of $\varphi$, let $D^\varphi(m):=M_{\mathcal{R}_m}(\rho'_m)$ be the representative
matrix of $\rho'_m$ w.r.t. $\mathcal{R}_m$, and set
$$
\mathcal{D}^\varphi (m):=\sqrt{\det D^\varphi(m)};
$$
then $\mathcal{D}^\varphi:M_{\mathcal{O}_{\boldsymbol{\nu}}}\rightarrow (0,+\infty)$ is well-defined 
and $\mathcal{C}^\infty$;
\end{enumerate}
We can now define a $\mathcal{C}^\infty$ function 
$\Psi_{\boldsymbol{\nu}}:M_{\mathcal{O}_{\boldsymbol{\nu}}}\rightarrow \mathbb{R}_+$ by setting
$$
\Psi_{\boldsymbol{\nu}}(m):=2^{1+\frac{r_G-1}{2}}\,\pi\cdot 
\frac{1}{\|\Phi (m)\|_{\varphi}\,\mathcal{D}^\varphi (m)}\cdot 
\frac{\mathrm{vol}(\mathcal{O}_{\boldsymbol{\nu}_{\varphi,u}},\sigma_{\boldsymbol{\nu}_{\varphi,u}})^2}
{\big| \det(S_{\boldsymbol{\nu}^{\varphi}_u})  \big|}
\cdot 
\frac{\mathrm{vol}^\varphi(T)}{\mathrm{vol}^\varphi(G)^2}.
$$
\end{defn}
By Theorem \ref{thm:scal asymp}
below, $\Psi_{\boldsymbol{\nu}}$ is actually independent of the choice of $\varphi$.

We need some further pieces of notation.

Given a real vector subspace $R\subseteq T_mM$, we shall
denote by $R^{\perp_{h_m}}$ the orthocomplement of $R$ with respect to the Hermitian structure
$h_m=\rho^M_m-\imath\,\omega_m$; equivalently, $R^{\perp_{h_m}}$ is the orthocomplement of the complex
subspace $R+J_m(R)$ of $T_mM$, and is a complex subspace of $(T_mM,J_m)$. Clearly, 
\begin{equation}
\label{eqn:Rhsomega}
R^{\perp_{h_m}}=R^{\perp_{\rho^M_m}}\cap R^{\perp_{\omega_m}}.
\end{equation}
where $R^{\perp_{\rho^M_m}}$ and $R^{\perp_{\omega_m}}$ are, respectively, the Riemannian and symplectic
orthocomplements of $R$. 

If $m\in M$, let $\mathfrak{g}_M(m)\subseteq T_mM$ be the vector subspace given by
the evaluations at $m$ of the all the vector fields on $M$ induced by the elements of
$\mathfrak{g}$. 
We shall see in Lemma \ref{lem:normal bundle} that
for any $m\in M_{\mathcal{O}_{\boldsymbol{\nu}}}$ the normal space of
$M_{\mathcal{O}_{\boldsymbol{\nu}}}$ at $m$ satisfies
\begin{equation}
\label{eqn:normal space Mm}
N_m(M_{\mathcal{O}_{\boldsymbol{\nu}}})\subseteq J_m\big( \mathfrak{g}_M(m) \big),\quad 
\text{hence}\quad
N_m(M_{\mathcal{O}_{\boldsymbol{\nu}}})\cap \mathfrak{g}_M(m)^{\perp_{h_m}}
=(0).
\end{equation}

In this setting, small displacements from a fixed $x\in X$ are conveniently
expressed in Heisenberg local coordinates (HLC) on $X$ centered at $x$ \cite{sz}.
A choice of HLC at $x$ gives a meaning to the expression $x+\mathbf{v}$, where 
$\mathbf{v}\in T_{\pi(x)}M$ has sufficiently small norm. Furthermore, the curve
$\gamma_{x,\mathbf{v}}:\tau\in (-\epsilon,\epsilon)\mapsto x+\tau\,\mathbf{v}$ is horizontal at
$\tau=0$, and in fact $\gamma_{x,\mathbf{v}}'(0)=\mathbf{v}^\sharp$.
More will be said in \S \ref{sctn:proofrapiddec}.

A further notational ingredient that will go into the statement of Theorem \ref{thm:scal asymp} is
an invariant governing the exponential decay of various asymptotics related to Szeg\"{o}
kernels \cite{sz}. 

\begin{defn}
\label{defn:psi2}
Let $\|\cdot\|$ and $\omega_0$ be the standard norm and symplectic structures
on $\mathbb{R}^{2\,d}$, respectively. Let us define
$\psi_2:\mathbb{R}^{2\,d}\times \mathbb{R}^{2\,d}\rightarrow \mathbb{R}$ by setting
$$
\psi_2(u,v):=-\imath\,\omega_0(u,v)-\frac{1}{2•}\,\|u-v\|^2.
$$
\end{defn}

A choice of Heisenberg local coordinates at $x$ entails the choice of
a unitary ismophism $T_{\pi(x)}M\cong \mathbb{C}^d$ (with the standard Hermitian structure),
by means of which we shall view $\psi_2$ as being defined on $T_{\pi(x)}M$.
For the sake of simplicity, we shall consider displacements of the
form $x+k^{-1/2}\,(\mathbf{v}+\mathbf{w})$, where $\mathbf{v}$ is normal to
$M_{\mathcal{O}_{\boldsymbol{\nu}}}$ and $\mathbf{w}$ is in $\mathfrak{g}_M(m)^{\perp_{h_m}}$.
Heuristically, $x+\mathbf{w}$ covers a displacement in a suitable quotient.

\begin{thm}
\label{thm:scal asymp}
Assume that $x\in X_{\mathcal{O}_{\boldsymbol{\nu}}}$, and that $\tilde{\mu}$ is free at
$x$. Set $m_x=\pi (x)$ and fix $C>0$, $\epsilon\in (0,1/6)$.
Then, uniformly for 
$\mathbf{v}_j\in N_{m_x}(M_{\mathcal{O}_{\boldsymbol{\nu}}})$ and 
$\mathbf{w}_j\in \mathfrak{g}_M(m)^{\perp_{h_m}}$ satisfying
$\|\mathbf{v}_j\|,\,\|\mathbf{w}_j\|\le C\,k^\epsilon$, the following
asymptotic expansion holds as $k\rightarrow +\infty$:
\begin{eqnarray*}
\lefteqn{\Pi^{\tilde{\mu}}_{k\,\boldsymbol{\nu}}
\left(x+\frac{1}{\sqrt{k}}\,(\mathbf{v}_1+\mathbf{w}_1) ,
x+\frac{1}{\sqrt{k}}\,(\mathbf{v}_2+\mathbf{w}_2)   \right)}\\
&\sim&
\Psi_{\boldsymbol{\nu}}(m_x)\,\left(\frac{k}{\varsigma(m_x)\,\pi}\right)^{d+\frac{1-r_G}{2}}
\cdot e^{\frac{1}{\varsigma(m_x)}\,\,\left[ 
 \psi_2(\mathbf{w}_1,\mathbf{w}_2)  -(\|\mathbf{v}_1\|_{m_x}^2+\|\mathbf{v}_2\|_{m_x}^2 )
\right]}\\
&&\cdot \left[1+\sum_{j\ge 1}k^{-j/2}\,P_j(m_x;\mathbf{v}_j,\mathbf{w}_j)   \right],
\end{eqnarray*}
where $P_j(m_x;\cdot)$ is a polynomial of degree $\le 3\,j$ and parity $j$.
If $X'_{\mathcal{O}_{\boldsymbol{\nu}}}\subseteq X_{\mathcal{O}_{\boldsymbol{\nu}}}$
is the open subset on which $\tilde{\mu}$ is free, the estimate holds uniformly on 
the compact subsets of $X'_{\mathcal{O}_{\boldsymbol{\nu}}}$. 
\end{thm}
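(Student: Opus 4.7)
The plan is to combine the Peter--Weyl integral representation of the isotypical projector with the Kirillov character formula and the Boutet de Monvel--Sj\"ostrand parametrix for $\Pi$, and then apply complex stationary phase with large parameter $k$. I start from
$$\Pi^{\tilde\mu}_{k\boldsymbol\nu}(x,y)=d_{k\boldsymbol\nu}\,\int_G\overline{\chi_{k\boldsymbol\nu}(g)}\,\Pi\bigl(\tilde\mu_{g^{-1}}x,y\bigr)\,dg.$$
For $g=\exp(\boldsymbol\eta)$ close to the identity, the Kirillov formula gives $\chi_{k\boldsymbol\nu}(\exp\boldsymbol\eta)\,j(\boldsymbol\eta)=\int_{\mathcal{O}_{k\boldsymbol\nu}}e^{i\langle\boldsymbol\beta,\boldsymbol\eta\rangle}\sigma_{k\boldsymbol\nu}(\boldsymbol\beta)$; reparametrizing the orbit via $hT\mapsto k\,\mathrm{Coad}_h\boldsymbol\nu$ exposes an explicit factor $k^{(d_G-r_G)/2}$ from the symplectic measure. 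Inserting the BdMS representation $\Pi(p,q)=\int_0^{\infty}e^{it\psi(p,q)}\,s(t,p,q)\,dt$ and rescaling $t=k\tau$ converts the whole expression into an oscillatory integral in $(\tau,\boldsymbol\eta,hT)$ with large-parameter phase
$$\Phi_k=k\,\bigl[\tau\,\psi\bigl(\tilde\mu_{\exp(-\boldsymbol\eta)}x,y\bigr)-\langle\mathrm{Coad}_h\boldsymbol\nu,\boldsymbol\eta\rangle\bigr].$$
Theorem \ref{thm:rapid decrease off locus resc}, together with a standard non-stationary-phase argument for $\boldsymbol\eta$ away from the origin (using freeness of $\tilde\mu$ at $x$ and $\mathbf 0\notin\Phi(M)$), then restricts the relevant integration region to a shrinking neighborhood of $\boldsymbol\eta=0$ and of $h_{m_x}T$ in $G/T$.

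Next I analyse the critical locus. The equation $\partial_\tau\Phi_k=0$ forces $\psi(\tilde\mu_{g^{-1}}x,y)=0$, i.e.\ $y=\tilde\mu_g x$ modulo the isotropic cone of $\psi$; using the identity $d\psi(\boldsymbol\eta_X)|_{\mathrm{diag}}=-\langle\Phi,\boldsymbol\eta\rangle$, the condition $\partial_{\boldsymbol\eta}\Phi_k|_{\boldsymbol\eta=0}=0$ becomes $\tau\,\Phi(m_x)=\mathrm{Coad}_h\boldsymbol\nu$, which by (\ref{eqn:hmzetam}) is solved exactly by $\tau_*=1/\varsigma(m_x)$ and $h\in h_{m_x}T$; the remaining $\partial_{hT}$-equation is automatic at $\boldsymbol\eta=0$. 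Hence the critical manifold is a single $T$-orbit. I then expand $\psi$ at the displaced arguments via Heisenberg local coordinates centered at $x$: from $\psi(x+u_1,x+u_2)=\psi_2(u_1,u_2)+O(|u|^3)$ with $u_j=k^{-1/2}(\mathbf v_j+\mathbf w_j)$, and from the orthogonality (\ref{eqn:normal space Mm}) (which implies $J_{m_x}\mathbf v_j\in\mathfrak g_M(m_x)$, hence the $h_{m_x}$-orthogonality of $\mathbf v_j$ to $\mathbf w_l$), the mixed symplectic terms $\omega_0(\mathbf v_j,\mathbf w_l)$ cancel out, leaving the quadratic form $\psi_2(\mathbf w_1,\mathbf w_2)-(\|\mathbf v_1\|_{m_x}^2+\|\mathbf v_2\|_{m_x}^2)$; multiplication by $\tau_*=1/\varsigma(m_x)$ reproduces the exponential factor in the theorem. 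The restriction $\epsilon<1/6$ is dictated by the requirement that the cubic tail $k\cdot O(k^{-3/2+3\epsilon})$ remain asymptotically negligible, after which stationary phase in the rescaled variables yields a genuine asymptotic expansion in half-integer powers of $k$ with polynomial coefficients $P_j$ whose parity follows from the standard symbolic calculus.

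The main technical difficulty is computing the leading coefficient and identifying it with $\Psi_{\boldsymbol\nu}(m_x)$; this requires a block analysis of the transverse Hessian of $\Phi_k$ along the critical manifold. The $(\tau,\mathrm{span}(\boldsymbol\nu^\varphi))$-block, coupling the dilation parameter to the radial direction in $\mathfrak t$, contributes the factor $\|\Phi(m_x)\|_\varphi^{-1}$; the $(\mathfrak t_{\boldsymbol\nu},\mathfrak t_{m_x}')$-block contributes $\mathcal{D}^\varphi(m_x)^{-1}$ through the bilinear form $\rho'_{m_x}$ of Definition \ref{defn:Dphim}; and the $(\mathfrak t^{\perp_\varphi},G/T)$-block is governed by the skew-symmetric operator $S_{\boldsymbol\nu^\varphi_u}$ and, after unfolding the Kirillov reparametrization, produces $|\det S_{\boldsymbol\nu^\varphi_u}|^{-1}\,\mathrm{vol}(\mathcal O_{\boldsymbol\nu_{\varphi,u}},\sigma_{\boldsymbol\nu_{\varphi,u}})^2$. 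Gathering these with $d_{\boldsymbol\nu}$, with the $2\pi$-powers produced by stationary phase, with the Kirillov Jacobian value $j(0)=1$, and with the normalization ratio $\mathrm{vol}^\varphi(T)/\mathrm{vol}^\varphi(G)^2$ introduced by the $G$- and $G/T$-integrations, one recovers exactly the expression defining $\Psi_{\boldsymbol\nu}(m_x)$; the claimed $\varphi$-independence then follows from the observation that each factor scales under $\varphi\mapsto c\,\varphi$ by a compensating power of $c$, so that the product is invariant.
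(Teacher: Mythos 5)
Your overall strategy is the right one, and it matches the paper's: Peter--Weyl integral representation for $\Pi^{\tilde\mu}_{k\boldsymbol\nu}$, Kirillov character formula near the identity, Boutet de Monvel--Sj\"ostrand parametrix, $u\mapsto k u$ rescaling, localization to a shrinking neighborhood of the critical locus, and a stationary phase expansion. The critical point analysis ($\tau_*=1/\varsigma(m_x)$, $\boldsymbol\eta=0$, coset $h_{m_x}T$) and the decomposition $\mathfrak{g}=\mathrm{span}(\boldsymbol\nu^\varphi)\oplus\mathfrak{t}_{\boldsymbol\nu}\oplus\mathfrak{t}^{\perp_\varphi}$ are also in line with the paper, as is the reason for $\epsilon<1/6$.

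However, there is a genuine gap in the passage from the near-diagonal expansion of $\psi$ to the stated exponential factor. You claim that, after cancelling the mixed terms $\omega_0(\mathbf v_j,\mathbf w_l)$, the Taylor expansion ``leaves the quadratic form $\psi_2(\mathbf w_1,\mathbf w_2)-(\|\mathbf v_1\|_{m_x}^2+\|\mathbf v_2\|_{m_x}^2)$.'' This is not what the second-order expansion of $\psi$ yields. First, $\omega_{m_x}(\mathbf v_1,\mathbf v_2)$ is not killed by the orthogonality to $\mathfrak g_M(m_x)^{\perp_{h_{m_x}}}$; its vanishing is a separate fact (Lemma \ref{lem:v_j involution}), which follows because the normal vectors $\mathbf v_j$ correspond under $J_{m_x}$ to elements of the abelian subalgebra $\mathfrak t'_{m_x}$, whose induced Hamiltonian vector fields are in involution. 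Second, and more importantly, even after that cancellation the Taylor expansion of $\psi_2$ in the $\mathbf v$-variables produces $-\tfrac12\|\mathbf v_1-\mathbf v_2\|^2_{m_x}$, not $-(\|\mathbf v_1\|^2+\|\mathbf v_2\|^2)$. The missing piece, $-\tfrac12\|\mathbf v_1+\mathbf v_2\|^2_{m_x}$, arises only after integrating out the degenerate direction $\boldsymbol\xi'\in\mathfrak t_{\boldsymbol\nu}$: the expansion of the phase contains the linear coupling $\omega_{m_x}\big(\boldsymbol\xi'_M(m_x),\mathbf v_1+\mathbf v_2\big)$, and the resulting Gaussian integral over $\boldsymbol\xi'$ against the damping factor $e^{-\tfrac{u}{2}\|\boldsymbol\xi'_M(m_x)\|^2}$ produces exactly $e^{-\tfrac{1}{2\varsigma(m_x)}\|\mathbf v_1+\mathbf v_2\|^2_{m_x}}$ together with the normalization $\mathcal D^\varphi(m_x)^{-1}$. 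Your proposal never runs this integration, so neither the exponent nor the $\mathcal D^\varphi$-factor is actually accounted for.

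Related to this, your ``$(\mathfrak t_{\boldsymbol\nu},\mathfrak t_{m_x}')$-block'' of the Hessian is not well-posed: after the reduction $\Phi(m_x)=\varsigma(m_x)\boldsymbol\nu$ one has $\mathfrak t_{m_x}'=\mathfrak t_{\boldsymbol\nu}$, so there is no off-diagonal pairing between these two ``spaces.'' The real phase $\Upsilon_x$ is degenerate along $\mathfrak t_{\boldsymbol\nu}$, and the nondegenerate Hessian blocks are the $(u,s)$-coupling, with determinant $-\varsigma(m_x)^2\|\boldsymbol\nu^\varphi\|_\varphi^2$, and the $(\boldsymbol\xi'',\boldsymbol\gamma)$-coupling through $Z_{\boldsymbol\nu^\varphi}$. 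The $\mathfrak t_{\boldsymbol\nu}$-direction is handled separately by the Gaussian integration just described, not by a Hessian block. You could equivalently package the imaginary part of $\psi$ into a complex phase and treat everything as one complex stationary phase, but then you must explicitly include the $\mathfrak t_{\boldsymbol\nu}$-direction in the complex Hessian with the imaginary bilinear form $\tau_*\rho'_{m_x}$ and its coupling to $\mathbf v_1+\mathbf v_2$; as written, the proposal is silent on exactly the step that produces both the $\mathbf v$-dependence of the Gaussian and the factor $\mathcal D^\varphi(m_x)^{-1}$ in $\Psi_{\boldsymbol\nu}(m_x)$.
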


By a Gaussian integral computation in normal Heisenberg coordinates, as in the
proof of Corollary 1.3 of \cite{pao-IJM}, one can then deduce the following:

\begin{cor}
Assume that $\tilde{\mu}$ is free along $X_{\mathcal{O}_{\boldsymbol{\nu}}}$. Then
there is an asymptotic expansion
$$
\dim H(X)^{\tilde{\mu}}_{k\,\boldsymbol{\nu}}\sim 
\left(\frac{k}{\pi}\right)^{d+1-r_G}\,\left[
\delta_{\boldsymbol{\nu},0}+k^{-1}\,\delta_{\boldsymbol{\nu},1}+\ldots\right],
$$
with
$$
\delta_{\boldsymbol{\nu},0}:=\frac{1}{2^{\frac{r_g-1}{2}}}\,\int_{M_{\mathcal{O}_{\boldsymbol{\nu}}}}
\left[ \frac{\Psi_{\boldsymbol{\nu}}(m)}{\varsigma (m)^{d+1-r_G}} \right]\,\mathrm{d}V_{M_{\mathcal{O}_{\boldsymbol{\nu}}}}(m),
$$
where $\mathrm{d}V_{M_{\mathcal{O}_{\boldsymbol{\nu}}}}$
is the density on $M_{\mathcal{O}_{\boldsymbol{\nu}}}$ for the induced Riemannian metric.
\end{cor}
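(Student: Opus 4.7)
The approach starts from the trace identity
$$\dim H(X)^{\tilde{\mu}}_{k\,\boldsymbol{\nu}}=\int_X \Pi^{\tilde{\mu}}_{k\,\boldsymbol{\nu}}(x,x)\,\mathrm{d}V_X(x),$$
valid because the equivariant projector has finite rank and smooth kernel. Theorem \ref{thm:rapid decrease off locus resc} forces the integrand to be $O(k^{-\infty})$ outside a tubular neighborhood of $X_{\mathcal{O}_{\boldsymbol{\nu}}}$ of radius $C\,k^{\epsilon-1/2}$, so the computation reduces modulo $O(k^{-\infty})$ to an integral over this shrinking tube. I would parametrize the tube via the normal exponential map combined with Heisenberg local coordinates, writing each point as $x+\mathbf{v}$ with $x\in X_{\mathcal{O}_{\boldsymbol{\nu}}}$ and $\mathbf{v}\in N_{\pi(x)}(M_{\mathcal{O}_{\boldsymbol{\nu}}}/M)$; the volume form then decomposes as $\mathrm{d}V_{X_{\mathcal{O}_{\boldsymbol{\nu}}}}(x)\,\mathrm{d}\mathbf{v}$ multiplied by a Jacobian admitting a Taylor expansion $1+O(\|\mathbf{v}\|)$.

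Next I would specialize Theorem \ref{thm:scal asymp} to the diagonal, taking $\mathbf{v}_1=\mathbf{v}_2=\mathbf{v}$ and $\mathbf{w}_1=\mathbf{w}_2=\mathbf{0}$; since $\psi_2(0,0)=0$, the Gaussian factor collapses to $e^{-2\|\mathbf{v}\|_{m_x}^2/\varsigma(m_x)}$. Rescaling $\mathbf{v}=k^{-1/2}\mathbf{u}$ contributes a Jacobian $k^{-(r_G-1)/2}$ from the $(r_G-1)$-dimensional normal fiber, and the Gaussian decay permits extending the $\mathbf{u}$-integration from the ball of radius $C\,k^{\epsilon}$ to all of $\mathbb{R}^{r_G-1}$ with only an $O(k^{-\infty})$ error. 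The leading Gaussian integral
$$\int_{\mathbb{R}^{r_G-1}}e^{-2\|\mathbf{u}\|^2/\varsigma(m_x)}\,\mathrm{d}\mathbf{u}=\bigl(\pi\,\varsigma(m_x)/2\bigr)^{(r_G-1)/2}$$
combines with the scalar prefactor $\Psi_{\boldsymbol{\nu}}(m_x)\bigl(k/(\varsigma(m_x)\pi)\bigr)^{d+(1-r_G)/2}$ and the factor $k^{-(r_G-1)/2}$ to yield, after bookkeeping of the $k$, $\pi$, $\varsigma$ and $2$ exponents, the pointwise contribution $\Psi_{\boldsymbol{\nu}}(m_x)\,2^{-(r_G-1)/2}\,(k/\pi)^{d+1-r_G}\,\varsigma(m_x)^{-(d+1-r_G)}$. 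Since $X_{\mathcal{O}_{\boldsymbol{\nu}}}\to M_{\mathcal{O}_{\boldsymbol{\nu}}}$ is a Riemannian $S^1$-bundle with fibers of unit length, integration of this $S^1$-invariant function against $\mathrm{d}V_{X_{\mathcal{O}_{\boldsymbol{\nu}}}}$ reduces to integration against $\mathrm{d}V_{M_{\mathcal{O}_{\boldsymbol{\nu}}}}$, producing precisely $\delta_{\boldsymbol{\nu},0}$.

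The lower-order coefficients $\delta_{\boldsymbol{\nu},j}$ arise from pairing the polynomials $P_j$ of Theorem \ref{thm:scal asymp} and the successive Taylor coefficients of the Jacobian of the normal exponential map against the Gaussian in $\mathbf{u}$; the parity-$j$ clause on $P_j$, together with the parity structure of the Jacobian expansion in normal Heisenberg coordinates, eliminate the half-integer powers of $k$, leaving an expansion in integer powers of $k^{-1}$. The main technical point I anticipate is the uniformity in $x\in X_{\mathcal{O}_{\boldsymbol{\nu}}}$ of all these asymptotic expansions, which is secured by the compactness of $M_{\mathcal{O}_{\boldsymbol{\nu}}}$ together with the uniform clause of Theorem \ref{thm:scal asymp} under the global freeness hypothesis; a secondary care is to check that the passage from the Heisenberg coordinate measure on the normal fiber to the Riemannian density $\mathrm{d}\mathbf{v}$ admits a polynomial asymptotic expansion on the shrinking tube, exactly in the vein of the proof of Corollary 1.3 of \cite{pao-IJM}.
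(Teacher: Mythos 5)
Your proposal is correct and follows essentially the same route as the paper, which merely refers to the Gaussian integral argument in normal Heisenberg coordinates from Corollary 1.3 of \cite{pao-IJM}. Your bookkeeping of the $k$, $\pi$, $\varsigma$ and $2$ powers (yielding the net $(k/\pi)^{d+1-r_G}\,2^{-(r_G-1)/2}\,\varsigma(m)^{-(d+1-r_G)}$), the reduction from $X_{\mathcal{O}_{\boldsymbol{\nu}}}$ to $M_{\mathcal{O}_{\boldsymbol{\nu}}}$ via unit-length fibers, and the parity argument excluding half-integer powers of $k$ all match what the cited computation delivers.
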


In closing this  introduction, we mention that there is a wider scope for the results in this
paper. While
our focus is on the complex projective setting,
in view of the microlocal theory of almost complex Szeg\"{o} kernels in \cite{sz}
the present approach can be naturally extended to the compact symplectic category.

\section{Examples}

We check the statement of Theorem \ref{thm:weyl cf} against those in
\cite{pao-IJM} , \cite{gp1}, \cite{gp2}. 

\begin{exmp}
Suppose $G=T$ is an $r$-dimensional torus.
Let us take the standard metric $\varphi$, so that
$\mathrm{vol}^\varphi(G)=(2\,\pi)^r$.
We obtain
$$
\Psi_{\boldsymbol{\nu}}(m_x)=\frac{2^{1+\frac{r-1}{2}}\,\pi}{(2\,\pi)^r}
\cdot 
\frac{1}{\|\Phi (m)\|_{\varphi}\,\mathcal{D}^\varphi (m)}
=\frac{1}{\left(\sqrt{2}\,\pi  \right)^{r-1}}\,\frac{1}{\|\Phi (m)\|_{\varphi}\,\mathcal{D}^\varphi (m)}.
$$
Thus Theorem \ref{thm:weyl cf} fits with Theorem 2 of \cite{pao-IJM}.
\end{exmp}

\begin{exmp}
Suppose $G=SU(2)$, so that $d_G=3$, $r_G=1$. Let 
$\varphi:\mathfrak{su}(2)\times \mathfrak{su}(2)\rightarrow \mathbb{R}$
be defined by 
$$
\varphi(A,B):=\mathrm{trace}\left(A\,\overline{B}^t\right)
=\mathrm{trace}\left(A\,B\right).
$$
Let $T\leqslant G$ be the standard torus; $\mathfrak{t}$ is generated by the
diagonal matrix $Z$ with entries $\imath,\,-\imath$, which has norm $\sqrt{2}$.
For $\nu\in \mathbb{Z}$, we shall denote by $\boldsymbol{\nu}\in \mathfrak{t}^\vee$
the weight taking value $\nu$ on $Z$.
We have
$$
\mathrm{vol}^\varphi(G)=2^{3/2}\cdot (2\,\pi^2),\quad 
\mathrm{vol}^\varphi(T)=\sqrt{2}\cdot  2\,\pi.
$$

For any $\nu\in \mathbb{Z}$, let $\boldsymbol{\nu}$ be the weight such that
$\langle\boldsymbol{\nu},Z\rangle =\nu$.
Then
$$
\boldsymbol{\nu}^\varphi=\frac{\nu}{2}\,Z,\quad \|\boldsymbol{\nu}\|_{\varphi}=\frac{1}{\sqrt{2}}\,\nu.
$$
Furthermore,
$$
\mathrm{vol}(\mathcal{O}_{\boldsymbol{\nu}},\sigma_{\boldsymbol{\nu}})=2\,\pi\,\nu,
\quad
|\det(S_{\boldsymbol{\nu}^\varphi})|=\nu^2.
$$

Finally, let $\lambda (m)>0$ be defined by the condition that $\Phi(m)^\varphi$ be similar to 
$\lambda (m)\,Z$.
Then $\|\Phi(m)\|_{\varphi}=\left\|\Phi(m)^\varphi\right\|_{\varphi}=\sqrt{2}\,\lambda (m)$
and $\varsigma (m)=\big(2\,\lambda (m)\big)/\nu$.
We obtain 
\begin{equation*}
\Psi_{\boldsymbol{\nu}}(m)=2\,\pi\cdot 
\frac{1}{\sqrt{2}\,\lambda (m)}\cdot 
4\,\pi^2
\cdot 
\frac{\sqrt{2}\cdot  2\,\pi}{\left( 2^{3/2}\cdot (2\,\pi^2) \right)^2}
=\frac{1}{2\,\lambda (m)},
\end{equation*}
in agreement with \cite{gp2}.
\end{exmp}

\begin{exmp}
If $G=U(2)$, we have $d_G=4$, $r_G=2$.
Let $\varphi:\mathfrak{u}(2)\times \mathfrak{u}(2)\rightarrow \mathbb{R}$ be defined
as for $SU(2)$. Let $T\leqslant G$ be the standard maximal torus; then $\mathfrak{t}$ has orthonormal
basis $(R,S)$, where $R$ and $S$ are the diagonal matrices with diagonal entries
$\begin{pmatrix}
\imath&0
\end{pmatrix}$ and $\begin{pmatrix}
0&\imath
\end{pmatrix}$, respectively.
Let $\boldsymbol{\nu}=\nu_1\,R^*+\nu_2\,S^*$, where $(R^*,S^*)$ is the dual basis.
Then $\boldsymbol{\nu}^\varphi=\nu_1\,R+\nu_2\,S$.

We have in this case
$$\mathrm{vol}^\varphi(\mathcal{O}_{\boldsymbol{\nu}},\sigma_{\boldsymbol{\nu}})
=2\,\pi\,(\nu_1-\nu_2), \quad |\det(S_{\boldsymbol{\nu}^\varphi})|=(\nu_1-\nu_2)^2.
$$
Furthermore,
$$
\mathrm{vol}^\varphi(T)=(2\,\pi)^2,\quad \mathrm{vol}^\varphi(G)=8\,\pi^3.
$$
We obtain for $m\in M_{\mathcal{O}_{\boldsymbol{\nu}}}$
\begin{equation}
\label{eqn:psinuu2}
\Psi_{\boldsymbol{\nu}}(m)=
2^{\frac{3}{2}}\,\pi\cdot 
\frac{1}{\|\Phi (m)\|_{\varphi}\,\mathcal{D}^\varphi (m)}\cdot 
4\,\pi^2
\cdot 
\frac{4\,\pi^2}{64\,\pi^6}=\frac{1}{\sqrt{2}\,\pi}\cdot 
\frac{1}{\|\Phi (m)\|_{\varphi}\,\mathcal{D}^\varphi (m)},
\end{equation}
which tallies with the front factor in the 
pointwise expansion in Theorem 1.4 of \cite{gp1}. 
In the latter expansion the numerical factor is written in a slightly 
less explicit form, but replacing $V_3=2\,\pi^2$ it is readily seen to equal
the one in (\ref{eqn:psinuu2}).
\end{exmp}

\section{Preliminaries}
\label{sctn:preliminaries}

We shall adopt the following notational conventions:
\begin{enumerate}
\item $R_j$ will denote a smooth real, complex or vector valued function defined in the
neighborhood of the origin of some vector space, vanishing to $j$-th order at the
origin, and allowed to vary from line to line;
\item if $G$ acts smoothly on a manifold $Z$ and $\boldsymbol{\xi}\in \mathfrak{g}$,
$\boldsymbol{\xi}_Z$ will denote the induced vector field on $Z$;
\item under the same assumption, if $p\in Z$ we shall denote by
$\mathrm{val}_p:\boldsymbol{\xi}\in\mathfrak{g}\rightarrow \boldsymbol{\xi}_Z(p)\in T_pZ$
the evaluation map;
\item if $m\in M$ and $\mathbf{v}\in T_mM$, we shall denote by $\|\mathbf{v}\|_m$
the norm of $\mathbf{v}$ with respect to $\rho^M$;
\item if $x\in X$ and $\upsilon=a\,\partial_\theta|_x+\mathbf{v}^\sharp\in T_xX$, 
in computations it will be convenient to set 
$\|\upsilon\|_x:=\sqrt{a^2+\|\mathbf{v}\|_m^2}$ (this is the norm in an obvious vertical
rescaling of $\rho^X$).
\end{enumerate}

\begin{rem}
\label{rem:equiv transv lf}
Arguing as in \S 2 of \cite{pao-IJM}  (or \S 4.1.1 of \cite{gp1}), one verifies
that the following conditions are equivalent:

\begin{enumerate}
\item Assumption \ref{asmpt:basic1} holds;
\item $\tilde{\mu}$ is locally free along $X_{\mathcal{O}_{\boldsymbol{\nu}}}$;
\item for every $m\in M_{\mathcal{O}_{\boldsymbol{\nu}}}$,
$\mathrm{val}_m:\mathfrak{g}\rightarrow  T_mM$
is injective on the annihilator of $\Phi(m)$, that is, 
$$
\ker(\mathrm{val}_m)\cap \Phi(m)^0=(0).
$$
\end{enumerate}
\end{rem}

Let us define
\begin{eqnarray}
\label{eqn:Znu}
\mathcal{Z}_{\boldsymbol{\nu}}:=\big\{(x,y)\in X_{\mathcal{O}}\times X_{\mathcal{O}}:
\,y\in G\cdot x\big\}.
\end{eqnarray}
Then $\mathcal{Z}_{\boldsymbol{\nu}}$ is a $G\times G$-invariant compact and
connected submanifold of $X\times X$.

\begin{thm}
\label{thm:rapid decrease}
Uniformly on compact subsets of $(X\times X)\setminus \mathcal{Z}_{\boldsymbol{\nu}}$,
we have 
$$
\Pi^{\tilde{\mu}}_{k\,\boldsymbol{\nu}}(x,y)=O\left(  k^{-\infty}\right).
$$
\end{thm}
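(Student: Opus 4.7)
The strategy is to rewrite $\Pi^{\tilde\mu}_{k\boldsymbol{\nu}}(x,y)$ as an oscillatory integral with large parameter $k$, and to verify that off $\mathcal{Z}_{\boldsymbol{\nu}}$ the combined phase admits no critical points; iterated integration by parts then yields $O(k^{-\infty})$.

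First I would represent the equivariant projector as the character-convolution
$$
\Pi^{\tilde\mu}_{k\boldsymbol{\nu}}(x,y) = d_{k\boldsymbol{\nu}} \int_G \overline{\chi_{k\boldsymbol{\nu}}(g)}\,\Pi\bigl(\tilde\mu_{g^{-1}}(x),y\bigr)\,dg.
$$
On a suitable neighborhood of $e\in G$, parametrized by $g=\exp(\boldsymbol{\xi})$, I would invoke the Kirillov character formula
$$
\chi_{k\boldsymbol{\nu}}\bigl(\exp(\boldsymbol{\xi})\bigr) = j(\boldsymbol{\xi})^{-1/2}\int_{\mathcal{O}_{\boldsymbol{\nu}}} e^{ik\langle\beta,\boldsymbol{\xi}\rangle}\,d\lambda_{\boldsymbol{\nu}}(\beta),
$$
together with the Boutet de Monvel--Sj\"ostrand parametrix
$\Pi(x',y') \equiv \int_0^{+\infty} e^{it\,\psi(x',y')}\,s(t,x',y')\,dt$ modulo $\mathcal{C}^\infty$. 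After rescaling $t\mapsto kt$, this yields, up to a smoothing remainder, an oscillatory integral with phase
$$
\Xi(t,\boldsymbol{\xi},\beta;x,y) := t\,\psi\bigl(\tilde\mu_{\exp(-\boldsymbol{\xi})}(x),y\bigr) + \langle\beta,\boldsymbol{\xi}\rangle
$$
and amplitude a symbol in $k$ of bounded order; the polynomial growth $d_{k\boldsymbol{\nu}}=O(k^{(d_G-r_G)/2})$ is harmless. The complementary $g$-region is reduced to the regular part of $T$ via Weyl integration and handled by an analogous analysis.

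The core of the argument is the critical-point analysis of $\Xi$. The condition $\partial_t\Xi = 0$ forces $\psi\bigl(\tilde\mu_{\exp(-\boldsymbol{\xi})}(x),y\bigr)=0$; since the Boutet de Monvel--Sj\"ostrand phase vanishes only on the diagonal of $X\times X$, this gives $y=\tilde\mu_{\exp(-\boldsymbol{\xi})}(x)$, i.e., $y\in G\cdot x$. Using $d_1\psi$ along the diagonal together with the contact lift (\ref{eqn:contact}), which yields $\alpha(\boldsymbol{\xi}_X)=-\langle\Phi,\boldsymbol{\xi}\rangle$, the equation $\partial_{\boldsymbol{\xi}}\Xi = 0$ evaluated at this configuration reads $\beta = t\,\Phi(\pi(x))$; coupled with $\beta\in\mathcal{O}_{\boldsymbol{\nu}}$ and $t>0$, this forces $\Phi(\pi(x))\in\mathcal{C}(\mathcal{O}_{\boldsymbol{\nu}})$, hence $x\in X_{\mathcal{O}_{\boldsymbol{\nu}}}$ and, by $G$-equivariance, also $y\in X_{\mathcal{O}_{\boldsymbol{\nu}}}$. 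Thus the critical set of $\Xi$ coincides with $\mathcal{Z}_{\boldsymbol{\nu}}$.

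On any compact $K\subset (X\times X)\setminus \mathcal{Z}_{\boldsymbol{\nu}}$ there is therefore a uniform lower bound $|d\Xi|\ge c>0$, and the standard non-stationary phase lemma (iterated integration by parts in $t$, $\boldsymbol{\xi}$ and $\beta$) produces an arbitrary negative power of $k$. The main obstacle is the correct bookkeeping of the $\boldsymbol{\xi}$-derivatives: one must verify that the Kirillov Jacobian $j(\boldsymbol{\xi})^{-1/2}$ combines with $\partial_{\boldsymbol{\xi}}[\tilde\mu_{\exp(-\boldsymbol{\xi})}(x)]=-\boldsymbol{\xi}_X(x)+O(|\boldsymbol{\xi}|^2)$ so that the critical equations decouple as claimed, that (\ref{eqn:contact}) delivers the precise relation $\beta=t\,\Phi(\pi(x))$, and that all bounds remain uniform on compact subsets of $(X\times X)\setminus \mathcal{Z}_{\boldsymbol{\nu}}$.
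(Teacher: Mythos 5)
Your proposal takes a genuinely different route than the paper. The paper's proof of this theorem is much softer: it invokes the wave front set inclusion $\mathrm{WF}(\Pi_L)\subseteq\mathcal{Z}_{\boldsymbol{\nu}}$ for the ladder projector $\Pi_L=\bigoplus_{k\ge 1}\Pi^{\tilde\mu}_{k\boldsymbol{\nu}}$ (from the Guillemin--Sternberg theory in \cite{gs1}), cuts off by a $G\times G$-invariant function $\varrho$ vanishing near $\mathcal{Z}_{\boldsymbol{\nu}}$ and equal to $1$ near the given compact $K$, thereby obtaining a \emph{smooth} kernel $\varrho\,\Pi_L$, and then reduces the claim to rapid decay of the matrix Fourier coefficients $\mathcal{F}(F_{x,y})(k\boldsymbol{\nu}-\boldsymbol{\delta})$ of the smooth function $F_{x,y}(g):=(\varrho\,\Pi_L)(\tilde\mu_{g^{-1}}(x),y)$ on $G$, obtained by iterating the Casimir operator and Parseval's identity. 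No parametrix, no Kirillov formula, no stationary-phase analysis is needed. Your plan is closer in spirit to the paper's proofs of the rescaled Theorems~\ref{thm:rapid decrease resc} and~\ref{thm:rapid decrease off locus resc}: combine the Boutet de Monvel--Sj\"{o}strand parametrix with the Kirillov formula and analyze the resulting oscillatory integral. Your critical-point computation correctly identifies $\mathcal{Z}_{\boldsymbol{\nu}}$ as the locus on which the phase stabilizes and would, if completed, reprove the wave front inclusion as a byproduct rather than import it.

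As written, however, there are two genuine gaps. First, you invoke the Kirillov formula only near $e\in G$, but the $g$'s at which $\Pi\bigl(\tilde\mu_{g^{-1}}(x),y\bigr)$ is singular are those with $\tilde\mu_{g^{-1}}(x)\approx y$, and these need not lie near $e$; your remark about reducing the complementary region to $T$ via Weyl integration does not settle this, since $\psi\bigl(\tilde\mu_{g^{-1}}(x),y\bigr)$ depends on the full group element $g$ and not merely on its conjugacy class, so the phase does not descend to $T$ after Weyl's formula. One would need first to exploit the $G\times G$-invariance of $K$ to conjugate the potential singularity to $g\approx e$ (replacing $y$ by a suitable $G$-translate), and separately to handle the region where $\Pi\bigl(\tilde\mu_{g^{-1}}(x),y\bigr)$ is uniformly smooth in $g$ — where the problem collapses to the Fourier-decay estimate the paper uses globally. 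Second, $\psi$ is a complex phase with $\Im\psi\ge D_X\,\mathrm{dist}_X^2\ge 0$; a ``critical set'' analysis must therefore be bifurcated: away from the diagonal of $X\times X$ one must invoke the exponential decay of $e^{\imath k t\psi}$ coming from $\Im\psi>0$ (with $t$ bounded away from $0$), and only near the diagonal yet transversal to $\mathcal{Z}_{\boldsymbol{\nu}}$ is it an honest non-stationary-phase integration by parts in $(t,\boldsymbol{\xi},\beta)$. Both gaps are fixable, but they are precisely the technical work that the paper's wave-front argument sidesteps, which is why the paper reserves the parametrix-plus-character-formula machinery for the finer rescaled estimates.
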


\begin{proof}
The argument is a slight modification of the one in \S 3.1 and \S 3.2 of \cite{gp1},
based on the theory in \cite{gs1};
hence we shall be somewhat sketchy. The ladder Szeg\"{o} projector
$$
\Pi_L:=\bigoplus _{k=1}^{+\infty}\Pi^{\tilde{\mu}}_{k\,\boldsymbol{\nu}}:
L^2(X)\longrightarrow \bigoplus_{k=1}^{+\infty}H(X)^{\tilde{\mu}}_{k\,\boldsymbol{\nu}} 
$$
has a distributional kernel whose wave front satisfies 
$
\mathrm{WF}(\Pi_L)\subseteq \mathcal{Z}_{\boldsymbol{\nu}}$.

Let $K\Subset (X\times X)\setminus \mathcal{Z}_{\boldsymbol{\nu}}$.
Without loss, we may assume that $K$ is $G\times G$-invariant. There exists a 
$G\times G$-invariant smooth cut-off
function $\varrho\ge 0$ on $X\times X$, which is identically equal to $1$ on a 
neighborhood of $K$, and
vanishes identically on a neighborhood of $\mathcal{Z}_{\boldsymbol{\nu}}$.
Hence $\varrho \cdot \Pi_L\in \mathcal{C}^\infty(X\times X)$. 
Hence, we obtain a $\mathcal{C}^\infty$ function
$$
F:(g,x,y)\in G\times X\times X\mapsto (\varrho \cdot \Pi_L)\left(\tilde{\mu}_{g^{-1}}(x),y\right)
\in \mathbb{C}.
$$
We shall set $F_{x,y}:=F(\cdot,x,y):G\rightarrow \mathbb{C}$.

Let us denote $P_{k\,\boldsymbol{\nu}}:L^2(X)\rightarrow L^2(X)_{k\,\boldsymbol{\nu}}$
the projector. Hence, $\Pi_{k\,\boldsymbol{\nu}}^{\tilde{\mu}}=
P_{k\,\boldsymbol{\nu}}\circ \Pi_L^{\tilde{\mu}}$. 
If $\mathrm{d}^HV_G(g)$ is the Haar measure on $G$, this means that for $(x,y)\in X\times X$
\begin{eqnarray}
\label{eqn:smooth integrand K}
\Pi^{\tilde{\mu}}_{k\,\boldsymbol{\nu}}(x,y)&=&d_{k\,\boldsymbol{\nu}}\int_G
\,\overline{\chi_{k\,\boldsymbol{\nu}}(g)}\,\Pi_L
\left(\tilde{\mu}_{g^{-1}}(x),y\right)\,\mathrm{d}^HV_G(g).
\end{eqnarray}

If $(x,y)\in K$, therefore, 
\begin{eqnarray}
\label{eqn:smooth integrand K1}
\Pi^{\tilde{\mu}}_{k\,\boldsymbol{\nu}}(x,y)&=&d_{k\,\boldsymbol{\nu}}\cdot\int_G
\,\overline{\chi_{k\,\boldsymbol{\nu}}(g)}\,(\varrho\,\Pi_L)
\left(\tilde{\mu}_{g^{-1}}(x),y\right)\,\mathrm{d}^HV_G(g)\\
&=&d_{k\,\boldsymbol{\nu}}\cdot 
\mathrm{trace}\big(\mathcal{F}(F_{x,y})(k\,\boldsymbol{\nu}-\boldsymbol{\delta})\big),
\nonumber
\end{eqnarray}
where $\mathcal{F}$ denotes the Fourier transform on $G$ \cite{su}, viewed as a function
on $\mathcal{D}^G$.
Since $d_{k\,\boldsymbol{\nu}}\le C_{\boldsymbol{\nu}}\,k^{d_G-r_G}$,
it suffices to show that 
$\mathcal{F}(F_{x,y})(k\,\boldsymbol{\nu}-\boldsymbol{\delta})=
O\left(k^{-\infty}\right)$ in Hilbert-Schmidt norm for $k\rightarrow +\infty$.
To this end, we apply arguments in \S 1 of \cite{su}.

To begin with, for any $\boldsymbol{\lambda}\in \mathcal{D}^G\setminus\{\mathbf{0}\}$
we have (see eq. (1.21) of \cite{su})
\begin{equation}
\label{eqn:parceval1}
\left\| \mathcal{F}\left(F_{x,y}\right) (\boldsymbol{\lambda})   \right\|_{H S}^2\le 
\|\boldsymbol{\lambda}\|^{-4\,l}\,\left\| \mathcal{F}\left(\Delta_G^lF_{x,y}\right) (\boldsymbol{\lambda})   \right\|_{H S}^2;
\end{equation}
here $\Delta_G$ denotes the Laplacian (Casimir) operator on $G$, and 
$\left\| \cdot   \right\|$ is the Hilbert-Schmidt norm. Hence for $k\gg 0$
\begin{equation}
\label{eqn:parceval1.1}
\left\| \mathcal{F}\left(F_{x,y}\right) (k\,\boldsymbol{\nu}-\boldsymbol{\delta})   \right\|_{H S}^2\le 
2\,k^{-4\,l}\|\boldsymbol{\nu}\|^{-4\,l}\,\left\| \mathcal{F}\left(\Delta_G^lF_{x,y}\right) (k\,\boldsymbol{\nu}-\boldsymbol{\delta})   \right\|_{H S}^2;
\end{equation}
On the other hand, by the Parceval identity
(eq. (1.16) of \cite{su}) we also have
\begin{equation}
\label{eqn:parceval1.2}
\left\| \mathcal{F}\left(\Delta_G^lF_{x,y}\right) (\boldsymbol{\lambda})   \right\|^2_{HS}\le 
\frac{1}{d_{\boldsymbol{\lambda}+\boldsymbol{\delta}}}\,\|\Delta_G^lF_{x,y}\|^2_2\le 
\|\Delta_G^lF_{x,y}\|^2_2
\end{equation}
where 
$\|\cdot\|_2$ denotes the $L^2$-norm on $G$
($d_{\boldsymbol{\lambda}+\boldsymbol{\delta}}=d(\boldsymbol{\lambda})$ in the notation of
\cite{su}).

Furthermore, by compactness for any $l\ge 0$ we can find $C_l>0$ such that 
$\|\Delta_G^lF_{x,y}\|_2^2\le C_l$ for all $(x,y)\in X\times X$.
Hence, by the Parceval identity, for every $\boldsymbol{\lambda}\in \mathcal{D}^G$
we have
\begin{equation}
\label{eqn:parceval1.3}
\left\| \mathcal{F}\left(\Delta_G^lF_{x,y}\right) (\boldsymbol{\lambda})   \right\|_{HS}^2\le 
\frac{1}{d_{\boldsymbol{\lambda}+\boldsymbol{\delta}}}\,C_l\le C_l.
\end{equation}
Therefore, for $k\gg 0$ we have
\begin{equation}
\label{eqn:parceval1.4}
\left\| \mathcal{F}\left(F_{x,y}\right) (k\,\boldsymbol{\nu}-\boldsymbol{\delta})   \right\|_{H S}^2\le 
k^{-4\,l}\,C_{l}'
\end{equation}

\end{proof}

\section{Proof of Theorem \ref{thm:rapid decrease resc}}

\label{sctn:proofrapiddec}
In the proof of Theorem \ref{thm:rapid decrease resc} we shall use the 
Weyl integration and character formulae, which we  briefly recall below, 
referring e.g. to \cite{v1} (\S 2.3-2.5) and  \cite{v2} (\S 4.13 and 4.14)
for a detailed treatment.

Let $W$ 
denote the Weyl group of $(\mathfrak{g},\mathfrak{t})$; then $W$ naturally acts on $\mathfrak{t}^\vee$
preserving the root lattice $L(R)\subset \mathfrak{t}^\vee$.

Let $L(G)\subset \mathfrak{t}^\vee$ be the lattice of integral forms of $G$.
Every $\boldsymbol{\gamma}\in L(G)$ defines a character 
$E_{\boldsymbol{\gamma}}:T\rightarrow S^1$, and we may define 
$$
A_{\boldsymbol{\gamma}}:=\sum_{s\in W}\epsilon(s)\,E_{s(\boldsymbol{\gamma})}:T\rightarrow 
\mathbb{C},
$$
where $\epsilon(s)=\det(s)$ (here $s$ is viewed as a linear map $\mathfrak{t}\rightarrow\mathfrak{t}$).
In particular, since $\boldsymbol{\delta}\in  L(G)$ by Assumption \ref{asmpt:basic2},
 we may set
$\Delta:=A_{\boldsymbol{\delta}}$.

Let $\mathrm{d}^HV_G$, $\mathrm{d}^HV_T$, $\mathrm{d}^HV_{G/T}$ 
be the Haar measures on $G$, $T$
and $G/T$, respectively (\S 2.3 of \cite{v1}, \S 4.13 of \cite{v2}).
Then the following holds (Theorem 4.13.5 of \cite{v2}):

\begin{thm}
\label{thm:weyl if}
(Weyl Integration Formula)
Let us set 
For every $L^1$-function on $G$, 
$$
\int _G \,f(g)\,\mathrm{d}^HV_G(g)=\frac{1}{|W|}\cdot\int_T\,f_T(t)\,\big|\Delta (t)\big|^2\,\mathrm{d}^HV_T(t),
$$
where
$$
f_T(t):=\int_{G/T}\,f\left( g\,t\,g^{-1} \right)\,\mathrm{d}^HV_{GT}(gT).
$$
\end{thm}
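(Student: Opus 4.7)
The plan is to realize the integral over $G$ as an iterated integral over $G/T \times T$ via the conjugation map, and to compute the Jacobian explicitly. Consider the smooth map
$$
\phi: G/T \times T \longrightarrow G, \qquad \phi(gT, t) := g\,t\,g^{-1}.
$$
This is well-defined because $T$ is abelian, and its image contains the open dense set $G_{reg}$ of regular elements. Any regular $g \in G$ lies in a unique maximal torus, so the fiber $\phi^{-1}(g)$ is in bijection with the set of maximal tori containing $g$ intersected with the Weyl orbit structure on $T$; concretely, $\phi$ is a $|W|$-to-one covering onto $G_{reg}$. The singular locus has measure zero in both source and target, so it can be discarded.

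Next I would compute the Jacobian of $\phi$ at a regular point $(eT, t)$. Using the splitting $\mathfrak{g} = \mathfrak{t} \oplus \mathfrak{t}^{\perp_\varphi}$ and differentiating $s \mapsto \exp(sX)\, t\, \exp(sH)\, \exp(-sX)$, the tangent map takes $(X, H) \in \mathfrak{t}^{\perp_\varphi} \oplus \mathfrak{t}$ to $t\,H + t\,(\mathrm{Ad}_{t^{-1}} - I)(X)$. Because $T$ preserves $\mathfrak{t}^{\perp_\varphi}$, the operator $(\mathrm{Ad}_{t^{-1}} - I)$ maps $\mathfrak{t}^{\perp_\varphi}$ into itself, so the differential is block-triangular and its Jacobian determinant equals $\bigl|\det\bigl((\mathrm{Ad}_{t^{-1}} - I)|_{\mathfrak{t}^{\perp_\varphi}}\bigr)\bigr|$. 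Diagonalizing $\mathrm{Ad}_t$ on $\mathfrak{g}_{\mathbb{C}}/\mathfrak{t}_{\mathbb{C}}$ via the root space decomposition gives
$$
\det\!\bigl((\mathrm{Ad}_{t^{-1}} - I)|_{\mathfrak{t}^{\perp_\varphi}}\bigr) = \prod_{\alpha \in R}\bigl(E_{-\alpha}(t) - 1\bigr) = \prod_{\alpha \in R^+}\bigl|E_{\alpha/2}(t) - E_{-\alpha/2}(t)\bigr|^{2},
$$
and by the Weyl denominator identity this last expression equals $|\Delta(t)|^{2}$. Left-translation by $g$ is an isometry of $G$ for the bi-invariant metric, so the Jacobian formula is independent of the coset representative.

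The final step is to feed this into the change-of-variables formula. Writing $\mathrm{d}^H V_G$, $\mathrm{d}^H V_{G/T}$, $\mathrm{d}^H V_T$ for the respective Haar measures (normalized so that the product measure on $G/T \times T$ agrees with the one induced from $G$), the pullback $\phi^*(\mathrm{d}^H V_G)$ equals $|\Delta(t)|^{2}\, \mathrm{d}^H V_{G/T}(gT)\, \mathrm{d}^H V_T(t)$. Since $\phi$ has degree $|W|$ over $G_{reg}$, for integrable $f$ we get
$$
\int_G f(g)\, \mathrm{d}^H V_G(g) = \frac{1}{|W|} \int_{G/T \times T} f(g\,t\,g^{-1})\, |\Delta(t)|^{2}\, \mathrm{d}^H V_{G/T}(gT)\, \mathrm{d}^H V_T(t),
$$
and Fubini rearranges this into the stated form with $f_T(t) = \int_{G/T} f(g t g^{-1})\, \mathrm{d}^H V_{G/T}(gT)$.

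The main obstacle is the Jacobian computation, specifically the passage from $\det(\mathrm{Ad}_{t^{-1}} - I)|_{\mathfrak{t}^{\perp_\varphi}}$ to $|\Delta(t)|^{2}$: this requires the Weyl denominator formula $\Delta = \prod_{\alpha \in R^+}(E_{\alpha/2} - E_{-\alpha/2})$, which in turn depends on Assumption \ref{asmpt:basic2} to guarantee that $\boldsymbol{\delta} \in L(G)$ and hence that the half-sum $\boldsymbol{\delta}$ integrates to an honest character of $T$. The degree-$|W|$ fiber count and the measure-zero singular locus are standard and require only that $T$ is a maximal torus and that the regular set in $T$ is open and dense.
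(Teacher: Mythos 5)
Your argument is the standard proof of the Weyl integration formula (degree-$|W|$ conjugation map $G/T \times T \to G$, Jacobian via the restriction of $\mathrm{Ad}_{t^{-1}} - I$ to $\mathfrak{t}^{\perp_\varphi}$, and the Weyl denominator identity), and it is correct. The paper does not prove this theorem but cites Theorem 4.13.5 of Varadarajan \cite{v2}, whose proof proceeds by exactly this method, so there is no genuine divergence to compare. One small remark: the use of the denominator identity to pass from $\prod_{\alpha\in R^+}\big|E_{\alpha/2}(t)-E_{-\alpha/2}(t)\big|^2$ to $|\Delta(t)|^2$ is only needed to match the notation $\Delta=A_{\boldsymbol{\delta}}$ fixed under Assumption \ref{asmpt:basic2}; the Jacobian itself is $\prod_{\alpha\in R}\big|1-E_\alpha(t)\big|$, which involves only integral weights and so holds without acceptability, as you observe.
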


Similarly, under Assumption \ref{asmpt:basic2}
$A_{\boldsymbol{\nu}}$ is well-defined for any $\boldsymbol{\nu}\in \mathcal{E}^G$.
Then we have (Theorem 4.14.4 of \cite{v2}):
\begin{thm}
\label{thm:weyl cf}
(The Weyl Character formula)
On the open and dense regular locus $T'\subset T$ (defined by $\Delta\neq 0$), we have
$$
\left.\chi_{\boldsymbol{\nu}}\right|_{T'}=\frac{A_{\boldsymbol{\nu}}}{•\Delta}. 
$$
\end{thm}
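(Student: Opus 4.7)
The plan is to follow the classical algebraic argument due to Weyl, using the Weyl integration formula of Theorem \ref{thm:weyl if} as the main analytic input and the $W$-symmetry/antisymmetry dichotomy as the main structural input. First, I would recall that the character $\chi_{\boldsymbol{\nu}}$ of $V_{\boldsymbol{\nu}}$ restricted to $T$ is a finite Fourier sum $\sum_{\boldsymbol{\mu}} m_{\boldsymbol{\mu}}\,E_{\boldsymbol{\mu}}$ with $m_{\boldsymbol{\mu}}\in\mathbb{Z}_{\ge 0}$, the sum running over weights of $V_{\boldsymbol{\nu}}$. Since any two elements of $T$ are conjugate in $G$ precisely when they lie in a $W$-orbit, and the trace of the representation is a conjugation invariant, $\chi_{\boldsymbol{\nu}}|_T$ is $W$-invariant. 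Moreover, the highest weight $\boldsymbol{\lambda}=\boldsymbol{\nu}-\boldsymbol{\delta}$ occurs with multiplicity one, and every other $\boldsymbol{\mu}$ with $m_{\boldsymbol{\mu}}>0$ satisfies $\boldsymbol{\mu}\prec\boldsymbol{\lambda}$ in the usual dominance order.

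Next, I would observe that by definition $\Delta = A_{\boldsymbol{\delta}}$ is $W$-antisymmetric: $s^*\Delta = \epsilon(s)\,\Delta$ for each $s\in W$. Consequently the product $\Delta\cdot\chi_{\boldsymbol{\nu}}|_T$ is again a finite Fourier series on $T$ that is $W$-antisymmetric. The structural lemma to invoke here is that any such $W$-antisymmetric finite Fourier series is uniquely a $\mathbb{Z}$-linear combination $\sum_{\boldsymbol{\mu}} c_{\boldsymbol{\mu}}\,A_{\boldsymbol{\mu}}$, with $\boldsymbol{\mu}$ ranging over strictly dominant integral weights; this follows from the fact that $W$ acts freely on the regular locus of the weight lattice and that under Assumption \ref{asmpt:basic2} all the shifts $\boldsymbol{\mu}=\boldsymbol{\lambda}'+\boldsymbol{\delta}$ encountered lie in $L(G)$. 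Thus $\Delta\cdot\chi_{\boldsymbol{\nu}}|_T=\sum_{\boldsymbol{\mu}} c_{\boldsymbol{\mu}}\,A_{\boldsymbol{\mu}}$ with only finitely many nonzero $c_{\boldsymbol{\mu}}\in\mathbb{Z}$.

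Then I would pin down the coefficients using $L^2$-orthogonality. For characters of irreducible representations one has $\int_G |\chi_{\boldsymbol{\nu}}(g)|^2\,\mathrm{d}^HV_G(g)=1$. Applying the Weyl integration formula of Theorem \ref{thm:weyl if} to the class function $|\chi_{\boldsymbol{\nu}}|^2$ gives
$$
\frac{1}{|W|}\int_T \bigl|\Delta(t)\,\chi_{\boldsymbol{\nu}}(t)\bigr|^2\,\mathrm{d}^HV_T(t)=1.
$$
Substituting the expansion $\Delta\,\chi_{\boldsymbol{\nu}}|_T=\sum_{\boldsymbol{\mu}} c_{\boldsymbol{\mu}}\,A_{\boldsymbol{\mu}}$ and using the elementary orthogonality $\int_T E_{\boldsymbol{\alpha}}\,\overline{E_{\boldsymbol{\beta}}}\,\mathrm{d}^HV_T = \delta_{\boldsymbol{\alpha},\boldsymbol{\beta}}\,\mathrm{vol}(T)$, together with the fact that the $W$-orbits indexing the $A_{\boldsymbol{\mu}}$'s are disjoint of size $|W|$, forces exactly one $c_{\boldsymbol{\mu}}$ to be nonzero with $|c_{\boldsymbol{\mu}}|=1$.

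Finally, to identify which $\boldsymbol{\mu}$ survives and that its coefficient is $+1$, I would compare highest weights: the highest Fourier exponent appearing in $\Delta\cdot\chi_{\boldsymbol{\nu}}|_T$ is $\boldsymbol{\delta}+\boldsymbol{\lambda}=\boldsymbol{\nu}$, and it occurs with coefficient $+1$ from multiplying the two leading terms; since the same leading term of $A_{\boldsymbol{\nu}}$ has coefficient $+1$, one concludes $\Delta\cdot\chi_{\boldsymbol{\nu}}|_T = A_{\boldsymbol{\nu}}$. Dividing on the open regular locus $T'$ where $\Delta\neq 0$ yields the desired identity. The main obstacle in this program is not analytic but structural: one must carefully set up the unique-expansion lemma for $W$-antisymmetric Fourier series on $T$ in terms of the $A_{\boldsymbol{\mu}}$'s, and verify that the dominance ordering together with Assumption \ref{asmpt:basic2} really does single out a unique strictly dominant weight at each stage. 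Everything else is a clean orthogonality computation in $L^2(T, |\Delta|^2\,\mathrm{d}^HV_T)$.
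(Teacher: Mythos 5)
Your proof is correct and follows the standard classical (Weyl) argument; the paper itself does not prove this statement but cites it directly as Theorem 4.14.4 of Varadarajan's \emph{An introduction to harmonic analysis on semisimple Lie groups}, where essentially the same argument — $W$-antisymmetry of $\Delta\cdot\chi_{\boldsymbol{\nu}}|_T$, expansion in the alternating sums $A_{\boldsymbol{\mu}}$, the Weyl integration formula applied to $\int_G|\chi_{\boldsymbol{\nu}}|^2=1$, orthogonality of the $A_{\boldsymbol{\mu}}$, and highest-weight comparison — is carried out. The only small point worth making explicit is that the $L^2(T)$ orthogonality computation requires the Haar measures on $G$ and $T$ to be normalized to total mass one (so that $f_T=f$ for class functions and $\int_T E_{\boldsymbol{\alpha}}\overline{E_{\boldsymbol{\beta}}}=\delta_{\boldsymbol{\alpha},\boldsymbol{\beta}}$), which is indeed the convention in force for $\mathrm{d}^HV_G$ and $\mathrm{d}^HV_T$ here.
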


Another basic ingredient of the following arguments
is the microlocal representation of $\Pi$ as a Fourier integral operator introduced
in \cite{bs}, and its elaboration in the projective (and sympletic) setting 
in \cite{z}, \cite{bsz}, \cite{sz}. We refer to the latter papers for a detailed discussion, and
simply recall that the latter description has the form
\begin{equation}
\label{eqn:szego oif}
\Pi (x,y)=\int_0^{+\infty}\,e^{\imath\,u\,\psi (x,y)}\,s(x,y,u)\,\mathrm{d}u,
\end{equation}
where $\psi$ is a complex phase function (with $\Im (\psi)\ge 0$) and 
$s$ is a semiclassical symbol on $X\times X$, admitting an asymptotic expansion
of the form 
\begin{equation}
\label{eqn:amplitude szego expanded}
s(x,y,u)\sim \sum_{j\ge 0}\,s_j(x,y)\,u^{d-j}. 
\end{equation}
We shall invoke the
following two properties of $\psi$:
\begin{enumerate}
\item for any $x\in X$, we have 
\begin{equation}
\label{eqn:diagodiffpsi}
\mathrm{d}_{(x,x)}\psi =(\alpha_x,-\alpha_x);
\end{equation}

\item there exists a constant $D_X>0$ such that ($\Im$ denoting imaginary part)
\begin{equation}
\label{eqn:imaginary part}
\Im \big( \psi (x,y) \big)\ge D_X\,\mathrm{dist}_X(x,y)^2\qquad \forall\,x,y\in X.
\end{equation}
\end{enumerate}

A further useful tool is given by Heisenberg local coordinates on $X$; these are
the local coordinates on $X$, introduced in \cite{sz}, in which the local scaling asymptotics
of Szeg\"{o} kernels exhibit their universal character. 
Given $x\in X$, a system of local Heisenberg coordinates on $X$ centered at $x$
will be denoted additively, in the form
$x+(\theta,\mathbf{v})$; here $\theta\in (-\pi,\pi)$ and
$\mathbf{v}\in 
\mathbb{C}^d$ varies in an open ball centered at the origin. 
Fiber rotation is represented by translation in $\theta$, and
the locus $\theta=0$ projects diffeomorphically onto its image in $M$,
hence defines a local section of $X$ which is horizontal at $x$ with respect to the
connection form $\alpha$.
Thus a system of Heisenberg local coordinates for $X$
centered at $x$ entails a choice of local coordinates for $M$ centered
at $m_x=\pi(x)$; actually the latter system determines a unitary isomorphism
$\mathbb{C}^d\cong T_mM$ with respect to $(\omega_m,J_m)$, so when writing
$x+(\theta,\mathbf{v})$ it is often assumed that $\mathbf{v}\in T_mM$
(as will be the case below).
Further, when $\theta=0$ we usually write $x+\mathbf{v}$ for
$x+(0,\mathbf{v})$ (as in the statement of Theorem \ref{thm:scal asymp}).
Referring to (\ref{eqn:amplitude szego expanded}), in Heisenberg local coordinates at $x$
we have 
\begin{equation}
\label{eqn:s_0heis}
s_0(x,x)=\frac{1}{\pi^d}.
\end{equation}
We refer the reader to \cite{sz} for a detailed discussion.

\begin{proof}[Proof of Theorem \ref{thm:rapid decrease resc}]
The proof is an adaptation of the argument used when $G=U(2)$ in \cite{gp1},
so we'll be somewhat sketchy.

To begin with, in view of Theorem \ref{thm:rapid decrease} we may assume without loss
that $x$ and $y$ belong to a small $S^1\times G$-invariant neighborhood $V_{\mathcal{O}}$
of $X_{\mathcal{O}}$.
Equivalently, $m_x:=\pi(x)$ and $m_y:=\pi(y)$ belong to a small $G$-invariant neighborhood of
$U_{\mathcal{O}}$ of $M_{\mathcal{O}}$ in $M$, and .
In particular, we may assume that $\tilde{\mu}$ is free on $V_{\mathcal{O}}$. 

Hence
$\Phi_G(m_x)$ belongs a small conic neighborhood of $\mathcal{C}(\mathcal{O}_{\boldsymbol{\nu}})$. 
Furthermore,
replacing $(x,y)$ by $\left(\tilde{\mu}_h(x),\tilde{\mu}_h(y)\right)$
for a suitable $h\in G$, we may also assume that $\Phi_G(m_y)$ 
belongs to a small conic neighborhood
of $\mathbb{R}_+\cdot \boldsymbol{\nu}$.
More precisely, we may assume that
$\Phi_G(m_y)=\lambda_y\,\boldsymbol{\nu}+\beta_y$, 
where $\lambda_y>0$,
$\beta_y\in\boldsymbol{\nu}^0$ are smooth
(here $\boldsymbol{\nu}^\perp\subset\mathfrak{g}^\vee$ is the orthcomplement of 
$\boldsymbol{\nu}$ with respect to, say, $\varphi^H$), and $\|\beta_y\|\ll \lambda_y$.

We have
\begin{equation}
\label{eqn:szego equiv proj}
\Pi_{k\,\boldsymbol{\nu}}(x,y)=
d_{k\,\boldsymbol{\nu}}\,\int_G \overline{\chi_{k\,\boldsymbol{\nu}}(g)}\,\Pi\left(
\tilde{\mu}_{g^{-1}}(x),y\right)\,\mathrm{d}^HV_G(g).
\end{equation}
%
For a suitably small $\delta_K>0$, let us introduce the open cover
$\mathcal{V}=\{V',V^{''}\}$ of
$G\times X\times X$ given by
\begin{eqnarray*}
V'&:=&\left\{ (g,x,y)\in G\times X\times X\,:\,\mathrm{dist}_X 
\left(\tilde{\mu}_{g^{-1}}(x),y   \right) <2\,\delta_K  \right\},\\
V^{''}&:=&\left\{ (g,x,y)\in G\times X\times X\,:\,\mathrm{dist}_X 
\left(\tilde{\mu}_{g^{-1}}(x),y   \right)>\delta_K  \right\}.
\end{eqnarray*}
Let $\varrho'+\varrho''=1$ be a partition of unity on $G\times X\times X$
subordinate to $\mathcal{V}$.
We then have 
\begin{equation}
\label{eqn:szego equiv proj dec}
\Pi_{k\,\boldsymbol{\nu}}(x,y)=
\Pi_{k\,\boldsymbol{\nu}}(x,y)'+\Pi_{k\,\boldsymbol{\nu}}(x,y)^{''},
\end{equation}
where $\Pi_{k\,\boldsymbol{\nu}}(x,y)'$ and $\Pi_{k\,\boldsymbol{\nu}}(x,y)^{''}$ are
defined as in (\ref{eqn:szego equiv proj}), except that the integrand has been multiplied by
$\varrho'$ and $\varrho^{''}$, respectively.
Therefore, $\mathcal{G}(g,x',y'):=\varrho^{''}\cdot \Pi\left(\tilde{\mu}_{g^{-1}}(x'),y'   \right)$
is a smooth function on $G\times X$. Hence, letting $\mathcal{G}_{x',y'}:=\mathcal{G}(\cdot,x',y')$
arguing as above we have
$$
\Pi_{k\,\boldsymbol{\nu}}(x',y')^{''}=d_{k\,\boldsymbol{\nu}}\,
\mathrm{trace}\big(\mathcal{F}(\mathcal{G}_{x',y'})(k\,\boldsymbol{\nu}-\boldsymbol{\delta})\big)
=O\left(k^{-\infty}\right),
$$
uniformly in $(x',y')\in X\times X$.
We are thus reduced to considering the asymptotics of 
the former summand $\Pi_{k\,\boldsymbol{\nu}}(x,y)'$.
On the support of $\varrho'$, $y$ belongs to a small neighborhood of
$\tilde{\mu}_{g^{-1}}(x)$; hence we may represent $\Pi$ as a Fourier integral operator.

Applying Theorems \ref{thm:weyl cf} and \ref{thm:weyl if}, 
and recalling that $\Delta\circ \sigma=\epsilon(\sigma)\,\Delta$ for any $\sigma\in W$,
we obtain 
\begin{eqnarray}
\label{eqn:szego weyl}
\Pi_{k\,\boldsymbol{\nu}}(x,y)&\sim&
 d_{k\,\boldsymbol{\nu}}\,\int_G\,\mathrm{d}^HV_G(g)\,\left[ 
\overline{\chi_{k\,\boldsymbol{\nu}}(g)}\,\varrho'(g,x,y)\,\Pi\left(\tilde{\mu}_{g^{-1}}(x),y  \right)  
\right]\\
&=&\frac{d_{k\,\boldsymbol{\nu}}•}{|W|•}\,
\int_T\,\mathrm{d}^HV_T(t)\,\int_{G/T}\,\mathrm{d}^HV_{G/T}(g\,T)\nonumber\\
&&\left[\overline{\chi_{k\,\boldsymbol{\nu}}(t)}\,|\Delta(t)|^2\,\varrho'\left( g\,t\,g^{-1},x,y \right)      \,\Pi\left( \tilde{\mu}_{g\,t^{-1}g^{-1}}(x),y  \right)\right]\nonumber\\
&=&\frac{d_{k\,\boldsymbol{\nu}}•}{|W|•}\,\sum_{\sigma\in W}\epsilon(\sigma)\,
\int_T\,\mathrm{d}^H V_T(t)\,\int_{G/T}\,\mathrm{d}^H V_{G/T}(g\,T)\nonumber\\
&&\left[\overline{E_{k\,\boldsymbol{\nu}}(t^{\sigma})}\,\Delta(t)\,\varrho'\left( g\,t\,g^{-1},x,y \right)      \,\Pi\left( \tilde{\mu}_{g\,t^{-1}g^{-1}}(x),y  \right)\right]\nonumber\\
&=&d_{k\,\boldsymbol{\nu}}\,
\int_T\,\mathrm{d}^H V_T(t)\,\int_{G/T}\,\mathrm{d}^H V_{G/T}(g\,T)\nonumber\\
&&\left[\overline{E_{k\,\boldsymbol{\nu}}(t)}\,\Delta(t)\,\varrho'\left( g\,t\,g^{-1},x,y \right)      \,\Pi\left( \tilde{\mu}_{g\,t^{-1}g^{-1}}(x),y  \right)\right].\nonumber
\end{eqnarray}

Choosing a basis of the lattice $L(G)$, we obtain an isomorphism 
$B:(S^1)^{r_G}\cong \mathfrak{t}/L(G)\cong T$;
we shall write the general element of $(S^1)^{r_G}$ as
$e^{\imath\,\boldsymbol{\vartheta}}=\begin{pmatrix}
e^{\imath\,\vartheta_1}&\cdots&e^{\imath\,\vartheta_{r_G}}
\end{pmatrix}$. 
Then with $\mathrm{d}\boldsymbol{\vartheta}=\mathrm{d}\vartheta_1\,\cdots\,\mathrm{d}\vartheta_{r_G}$
$$
B^*\big( \mathrm{d}V_T \big)=\frac{1}{(2\,\pi)^{r_G}}\,\mathrm{d}\boldsymbol{\vartheta},
\quad E_{k\,\boldsymbol{\nu}}\circ B\left(e^{\imath\,\boldsymbol{\vartheta}}\right)=
e^{\imath\,k\,\langle \boldsymbol{\nu},\boldsymbol{\vartheta}  \rangle}
$$
To simplify notation, we shall simply identify $T$ and $(S^1)^{r_G}$,
and write $e^{\imath\,\boldsymbol{\vartheta}}$ for the corresponding element of $T$;
with the same abuse, we shall think of $\imath\,\boldsymbol{\vartheta}$
as an element of $\mathfrak{t}\leqslant \mathfrak{g}$.
Inserting (\ref{eqn:szego oif}) in (\ref{eqn:szego weyl}), we obtain
\begin{eqnarray}
\label{eqn:szego weyl 1}
\Pi_{k\,\boldsymbol{\nu}}(x,y)&\sim&
\frac{•d_{k\,\boldsymbol{\nu}}}{(2\,\pi)^{r_G}•}  \,
\int_{(-\pi,\pi)^{r_G}}\,\mathrm{d}\boldsymbol{\vartheta}\,\int_{G/T}\,\mathrm{d}^HV_{G/T}(g\,T)\\
&&\left[e^{\imath\,\big[u\,\psi(\tilde{\mu}_{g\,e^{-\imath\,\boldsymbol{\vartheta}}g^{-1}}(x),y)-k\,\langle \boldsymbol{\nu},\boldsymbol{\vartheta}  \rangle\big]}\,
\mathcal{A}_k(gT,\boldsymbol{\vartheta},x,y,u)\right]\nonumber\\
&=&\frac{•k\,d_{k\,\boldsymbol{\nu}}}{(2\,\pi)^{r_G}•}  \,
\int_{(-\pi,\pi)^{r_G}}\,\mathrm{d}\boldsymbol{\vartheta}\,
\int_{G/T}\,\mathrm{d}^HV_{G/T}(g\,T)\nonumber\\
&&\left[e^{\imath\,k\,\Gamma (g\,T,\boldsymbol{\vartheta},u,x,y)}\,
\mathcal{A}_k(gT,\boldsymbol{\vartheta},x,y,k\,u)\right]\nonumber
\end{eqnarray}
where
$$
\mathcal{A}_k(gT,\boldsymbol{\vartheta},x,y,u):=\Delta\left(e^{\imath\,\boldsymbol{\vartheta}}\right)\,
\varrho'\left( g\,e^{\imath\,\boldsymbol{\vartheta}}\,g^{-1},x,y \right)\, s\left( \tilde{\mu}_{g\,e^{-\imath\,\boldsymbol{\vartheta}}g^{-1}}(x),y,u  \right),
$$
$$
\Gamma(g\,T,\boldsymbol{\vartheta},u,x,y):= u\,\psi(\tilde{\mu}_{g\,e^{-\imath\,\boldsymbol{\vartheta}}g^{-1}}(x),y)-\langle \boldsymbol{\nu},\boldsymbol{\vartheta}  \rangle .
$$
%
On the support of $\varrho'$ we have 
$\tilde{\mu}_{g\,e^{-\imath\,\boldsymbol{\vartheta}}g^{-1}}(x)\sim y$.
In view of (\ref{eqn:diagodiffpsi}), in any given coordinate system
therefore
\begin{equation}
\label{eqn:approx diagonal dpsi}
\mathrm{d}_{\big( \tilde{\mu}_{g\,e^{-\imath\,\boldsymbol{\vartheta}}g^{-1}}(x), y \big)}\psi
\sim (\alpha_y,-\alpha_y).
\end{equation}
On the other hand, 
by Lemma 2.10 of \cite{pao-IJM} 
in Heisenberg local coordinates centered at $x$ we have
\begin{eqnarray}
\label{eqn:heisenberg local coordinates}
\lefteqn{
\tilde{\mu}_{g\,e^{-\imath\,\boldsymbol{\vartheta}}g^{-1}}(x)=
\tilde{\mu}_{e^{-\mathrm{Ad}_g(\imath\,\boldsymbol{\vartheta})}}(x)
}
\\
&=&x+\Big(\big\langle \Phi(m_x),\mathrm{Ad}_g(\imath\,\boldsymbol{\vartheta})\big\rangle 
+R_3(\boldsymbol{\vartheta}),-\mathrm{Ad}_g(\imath\,\boldsymbol{\vartheta})_M(m_x)
+R_2(\boldsymbol{\vartheta})   \Big)\nonumber\\
&=&x+\Big(\big\langle \Phi\big(\mu_{g^{-1}}(m_x)\big),\imath\,\boldsymbol{\vartheta}\big\rangle 
+R_3(\boldsymbol{\vartheta}),-\mathrm{Ad}_g(\imath\,\boldsymbol{\vartheta})_M(m_x)
+R_2(\boldsymbol{\vartheta})   \Big).\nonumber
\end{eqnarray}
It follows that on the support of $\varrho'$
\begin{equation}
\label{eqn:theta derivative}
\partial_{\boldsymbol{\vartheta}}\Gamma(g\,T,\boldsymbol{\vartheta},u,x,y)
\sim u\, \left.\mathrm{Ad}_{g^{-1}}\Phi(m_y)\right|_{\boldsymbol{\mathfrak{t}}}-\boldsymbol{\nu}.
\end{equation}

Since $\mathcal{O}_{\boldsymbol{\nu}}\cap \mathfrak{t}^0=\emptyset$,
there exists $r_0>0$ such that, with $\Phi^T(m)=\left.\Phi (m)\right|_{\mathfrak{t}}$,
$$
\|\left.\Phi^T (m)\right|_{\mathfrak{t}}\|\ge r_0,\quad \forall m\in M_{\mathcal{O}}.
$$
Hence, if $U_{\mathcal{O}}$ is a sufficiently small open neighborhood of
$M_{\mathcal{O}}$, then
$$
\left\|\Phi^T(m)\right\|\ge \frac{1}{2}\, r_0,\quad \forall m\in U_{\mathcal{O}}.
$$
This applies to 
$\mathrm{Ad}_{g^{-1}}\Phi(m_y)=\Phi\circ \mu_{g^{-1}}(m_y)$. It then follows from
(\ref{eqn:theta derivative}) (arguing as in the proof of Lemma 5.3 of
\cite{gp1}) that the following holds:

\begin{lem}
\label{lem:cmpt supp u}
Suppose $D\gg 0$, and let $\rho\in \mathcal{C}^\infty_c\big( (1/(2\,D), 2\,D) \big)$
be such a bump function that $\rho\equiv 1$ on $(1/D,D)$. Then only a rapidly decreasing 
contribution to the asymptotics of 
(\ref{eqn:szego weyl 1}) is lost, if the integrand is multiplied by 
$\rho (t)$.
\end{lem}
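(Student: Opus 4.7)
The strategy is a standard non-stationary phase argument in the torus variable $\boldsymbol{\vartheta}$, carried out exactly along the lines of Lemma 5.3 of \cite{gp1}. The starting point is the explicit form of the $\boldsymbol{\vartheta}$-derivative of the phase (\ref{eqn:theta derivative}),
$$
\partial_{\boldsymbol{\vartheta}}\Gamma(g\,T,\boldsymbol{\vartheta},u,x,y) \;\sim\; u\cdot\left.\mathrm{Ad}_{g^{-1}}\Phi(m_y)\right|_{\mathfrak{t}}\;-\;\boldsymbol{\nu}.
$$
Combined with the uniform lower bound $\|\Phi^T(m)\|\ge r_0/2$ on $U_{\mathcal{O}}$, already established in the text just before the Lemma as a consequence of $\mathcal{O}_{\boldsymbol{\nu}}\cap \mathfrak{t}^0=\emptyset$, this yields a two-sided non-stationarity: there exist $c_1,c_2>0$ and $D_0>0$ so that, for every $D\ge D_0$, $\|\partial_{\boldsymbol{\vartheta}}\Gamma\|\ge c_1\,u$ on $\{u\ge 2D\}$ (the $u$-term dominates as soon as $D\gg \|\boldsymbol{\nu}\|/r_0$), while $\|\partial_{\boldsymbol{\vartheta}}\Gamma\|\ge c_2$ on $\{0<u\le 1/(2D)\}$ (the $\boldsymbol{\nu}$-term dominates as soon as $D\gg \sup_{U_{\mathcal{O}}}\|\Phi^T\|/\|\boldsymbol{\nu}\|$).

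With this in hand, I introduce the standard first-order operator
$$
\mathcal{L}:=\frac{1}{i\,k\,\|\partial_{\boldsymbol{\vartheta}}\Gamma\|^2}\;\overline{\partial_{\boldsymbol{\vartheta}}\Gamma}\cdot \partial_{\boldsymbol{\vartheta}},\qquad \mathcal{L}\!\left(e^{ik\Gamma}\right)=e^{ik\Gamma},
$$
and integrate by parts $N$ times in $\boldsymbol{\vartheta}$ on the portion of (\ref{eqn:szego weyl 1}) where $u\notin \mathrm{supp}(\rho)$. No boundary contributions arise, since $\boldsymbol{\vartheta}$ ranges over the compact torus $T\cong (S^1)^{r_G}$. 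A direct bookkeeping shows that each application of the formal transpose $\mathcal{L}^t$ produces a factor of order $k^{-1}\,\|\partial_{\boldsymbol{\vartheta}}\Gamma\|^{-1}$: derivatives falling on the weight $\|\partial_{\boldsymbol{\vartheta}}\Gamma\|^{-2}$ are dominated by the leading term because $\partial_{\vartheta_j}\partial_{\boldsymbol{\vartheta}}\Gamma$ has the same polynomial order in $u$ as $\partial_{\boldsymbol{\vartheta}}\Gamma$ itself.

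Combining this gain with the polynomial growth $|\mathcal{A}_k(g T,\boldsymbol{\vartheta},x,y,ku)|\lesssim (1+ku)^d$ of the rescaled amplitude (inherited from (\ref{eqn:amplitude szego expanded})) and the front factor $k\,d_{k\boldsymbol{\nu}}=O(k^{1+d_G-r_G})$, the tail contribution is bounded by
$$
k^{1+d_G-r_G}\cdot k^{-N}\left(\int_0^{1/(2D)} c_2^{-N}\,(1+ku)^d\,du \;+\;\int_{2D}^{+\infty} (c_1 u)^{-N}\,(ku)^d\,du\right),
$$
which is $O(k^{2+d+d_G-r_G-N})$ for every $N>d+1$. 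Letting $N\to +\infty$ gives the $O(k^{-\infty})$ estimate. The only delicate point is the large-$u$ regime, where the polynomial growth of the amplitude in $u$ competes with the semiclassical gain from integration by parts; the match is sharp because $\|\partial_{\boldsymbol{\vartheta}}\Gamma\|$ grows linearly in $u$, so each integration by parts gains a full factor of $(ku)^{-1}$, which beats the polynomial growth once $N$ exceeds $d+1$. Everything else is routine bookkeeping.
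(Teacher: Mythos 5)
Your argument is correct and matches the strategy the paper outsources to Lemma~5.3 of \cite{gp1}: establish, from (\ref{eqn:theta derivative}) and the uniform lower bound $\|\Phi^T\|\ge r_0/2$ on $U_{\mathcal{O}}$ (which is where $\mathcal{O}_{\boldsymbol{\nu}}\cap\mathfrak{t}^0=\emptyset$ enters), that $\partial_{\boldsymbol{\vartheta}}\Gamma$ is bounded below by $c_1 u$ for large $u$ and by $c_2>0$ for small $u$, then integrate by parts in $\boldsymbol{\vartheta}$ on the compact torus and balance the $k^{-1}\|\partial_{\boldsymbol{\vartheta}}\Gamma\|^{-1}$ gain per step against the symbol growth $O((1+ku)^d)$ and the polynomial front factor $k\,d_{k\boldsymbol{\nu}}$. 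This is precisely the intended proof; the only blemishes are cosmetic (the precise exponent in your final polynomial bound is slightly overcounted, and the tail ranges should be $u\le 1/D$ and $u\ge D$ rather than $1/(2D)$ and $2D$, since $1-\rho$ is supported there), neither of which affects the $O(k^{-\infty})$ conclusion.
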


Hence we may assume that integration in $u$ is compactly supported.
The proof is the completed by iteratively integrating by parts in
$\mathrm{d}u$, as in the proof of Proposition 5.2 of \cite{gp1}.

\end{proof}

\section{Proof of Theorem \ref{thm:rapid decrease off locus resc}}

In the proof of Theorem \ref{thm:rapid decrease off locus resc},
we shall rely on the Kirillov character formula (\cite{ki}, \cite{r}), which we briefly recall. 

Let 
$
\exp_G:\boldsymbol{\xi}\in \mathfrak{g}\mapsto e^{\boldsymbol{\xi}}\in G
$ be the exponential map of $G$, and let $\mathfrak{g}'\subseteq \mathfrak{g}$
and $G'\subseteq G$ be open neighborhoods of the origin $\mathbf{0}\in \mathfrak{g}$
and of the unit $1_G\in G$, respectively, such that $\exp_G$ restricts to a
diffeomorphism $\mathfrak{g}'\rightarrow G'$.
We may find an Ad-invariant Euclidean product 
$\varphi^H$ on $\mathfrak{g}$, 
such that $\mathrm{d}^HV_G(g)$ is the Riemannian density
associated to the induced bi-invariant 
Riemannian metric on $G$, which with abuse of notation
we shall also denote by $\varphi^H$.
Let $\mathrm{d}^H\boldsymbol{\xi}$ be the Lebesgue measure on
$\mathfrak{g}$ induced by $\varphi^H$.
Let the $\mathcal{C}^\infty$ function 
$\mathcal{P}:\mathfrak{g}'\rightarrow (0,+\infty)$ be defined by the equality
$$
\exp_G^*( \mathrm{d}^HV_G)=\mathcal{P}^2\,\mathrm{d}^H\boldsymbol{\xi}.
$$
Clearly $\mathcal{P}(0)=1$.

Let us set $n_{G}:=(d_{G}-r_{G})/2$.
Then for every $\boldsymbol{\nu}\in \mathcal{E}^G$ 
and $\boldsymbol{\xi}\in \mathfrak{g}'$ we have
$$
\dim(\mathcal{O}_{\boldsymbol{\nu}})=\dim(G/T)=d_{G}-r_{G}=2\,n_{G}.
$$
Furthermore, let us denote by 
$\sigma_{\boldsymbol{\nu}}$ the Kostant-Kirillov symplectic structure on
$\mathcal{O}_{\boldsymbol{\nu}}$, so that $\sigma_{\boldsymbol{\nu}}^{n_{G}}/n_{G}!$
is the symplectic volume form on $\mathcal{O}_{\boldsymbol{\nu}}$.
In the following we shall set 
$$\mathrm{d}V_{\mathcal{O}_{\boldsymbol{\nu}}}:=\frac{•\sigma_{\boldsymbol{\nu}}^{n_{G}}}{•n_{G}!}, 
\quad \mathrm{vol}\big(\mathcal{O}_{\boldsymbol{\nu}}\big):=
\int_{\mathcal{O}_{\boldsymbol{\nu}}}\,\mathrm{d}V_{\mathcal{O}_{\boldsymbol{\nu}}}.
$$

The Kirillov character formula then says that

\begin{equation}
\label{eqn:kirillovcf}
\chi_{\boldsymbol{\nu}}\left(e^{\boldsymbol{\xi}}\right)=
\frac{1}{(2\,\pi)^{n_{\boldsymbol{\nu}}}}\, \frac{1}{•\mathcal{P} (\boldsymbol{\xi})}\,
\int_{\mathcal{O}_{\boldsymbol{\nu}}}\,e^{\imath\,\langle \boldsymbol{\lambda},
\boldsymbol{\xi}\rangle}\,\mathrm{d}V_{\mathcal{O}_{\boldsymbol{\nu}}}(\boldsymbol{\lambda})\quad (\boldsymbol{\xi}\in \mathfrak{g}').
\end{equation}
Given (\ref{eqn:kirillovcf}) and (\ref{eqn:weyldf}), setting $\boldsymbol{\xi}=\mathbf{0}$ we get
\begin{equation}
\label{eqn:dnuO}
d_{\boldsymbol{\nu}}=\frac{\mathrm{vol}\big(\mathcal{O}_{\boldsymbol{\nu}}\big)}{(2\,\pi)^{n_{G}}•}\,
\quad\Rightarrow\quad
\mathrm{vol}\big(\mathcal{O}_{\boldsymbol{\nu}}\big)=
(2\,\pi)^{n_{G}}\,\prod_{\boldsymbol{\beta}\in R^+}\frac{\varphi (\boldsymbol{\nu},\boldsymbol{\beta})}{\varphi (\boldsymbol{\delta},\boldsymbol{\beta})}.
\end{equation}

For every $k\ge 1$, by a rescaling we obtain
\begin{equation}
\label{eqn:kirillovcfk}
\chi_{k\,\boldsymbol{\nu}}\left(e^{\boldsymbol{\xi}}\right)=
\left(\frac{k}{2\,\pi}\right)^{n_{G}}\, 
\frac{1}{•\mathcal{P} (\boldsymbol{\xi})}\,
\int_{\mathcal{O}_{\boldsymbol{\nu}}}\,e^{\imath\,k\,\langle \boldsymbol{\lambda},
\boldsymbol{\xi}\rangle}\,\frac{•\sigma_{\boldsymbol{\nu}}^{n_{G}}(\boldsymbol{\lambda})}{•n_{G}!}\quad (\boldsymbol{\xi}\in \mathfrak{g}').
\end{equation}
In particular,
\begin{equation}
\label{eqn:kirillovcfkdimension}
d_{k\,\boldsymbol{\nu}}=
\left(\frac{k}{2\,\pi}\right)^{n_{G}}\, 
\mathrm{vol}\big(\mathcal{O}_{\boldsymbol{\nu}}\big).
\end{equation}

\begin{proof}
[Proof of Theorem \ref{thm:rapid decrease off locus resc}]
We may assume without loss that $\epsilon\in (0,1/6)$. 
Furthermore, it suffices to prove the Theorem when $x=y$, 
since by the Cauchy-Schwartz inequality
$$
\left|\Pi^{\tilde{\mu}}_{k\,\boldsymbol{\nu}}(x,y)\right|\le 
\Pi^{\tilde{\mu}}_{k\,\boldsymbol{\nu}}(x,x)^{\frac{1}{2}}\,
\Pi^{\tilde{\mu}}_{k\,\boldsymbol{\nu}}(y,y)^{\frac{1}{2}},
$$
and on the other hand $\Pi^{\tilde{\mu}}_{k\,\boldsymbol{\nu}}(x,x)^{\frac{1}{2}}$
may be seen to satisfy an \textit{a priori} polynomial bound in $k$, by adapting the arguments
in \S 5.1.2 of \cite{gp1}.
By Theorem \ref{thm:rapid decrease}, 
we need only consider the case where $x$ belongs to a small $S^1\times G$-invariant 
neighborhood $V_{\mathcal{O}_{\boldsymbol{\nu}}}\subseteq X$
of $X_{\mathcal{O}_{\boldsymbol{\nu}}}$; 
in particular, we may assume without loss that $\tilde{\mu}$ is free
on $V_{\mathcal{O}_{\boldsymbol{\nu}}}$.
Furthermore, we may replace $x$ by $\tilde{\mu}_g(x)$ for any given $g\in G$,
and assume that
$\Phi (m_x)=\lambda_x\,\boldsymbol{\nu}+\beta_x$, where
$\lambda_x>0$, $\beta_x\in \boldsymbol{\nu}^\perp$, and
$\|\beta_x\|\ll \lambda_x$.

Let us start from (\ref{eqn:szego equiv proj}) with $x=y$:
\begin{equation}
\label{eqn:szego equiv proj diag}
\Pi_{k\,\boldsymbol{\nu}}(x,x)=
d_{k\,\boldsymbol{\nu}}\,\int_G \overline{\chi_{k\,\boldsymbol{\nu}}(g)}\,\Pi\left(
\tilde{\mu}_{g^{-1}}(x),x\right)\,\mathrm{d}^HV_G(g).
\end{equation}

Let us set, for some small $\varepsilon_1>0$,
\begin{eqnarray*}
W'&:=&\left\{ (g,x)\in G\times X\,:\,\mathrm{dist}_X\left(\tilde{\mu}_g(x),x\right)<2\,\varepsilon_1   \right\}\\
W''&:=&\left\{ (g,x)\in G\times X\,:\,\mathrm{dist}_X\left(\tilde{\mu}_g(x),x\right)>\varepsilon_2   \right\}.
\end{eqnarray*}
Let $\rho'+\rho''=1$ be a partition of unity on $G\times X$ subordinate to the open cover
$\{W',W''\}$. Then an argument as in the proof of Theorem \ref{thm:rapid decrease} shows that only a rapidly
decreasing contribution to the asymptotics of (\ref{eqn:szego equiv proj diag}) is lost, if
the integrand is multiplied by $\rho''$. Hence we are reduced to considering the asymptotics
of
\begin{equation}
\label{eqn:szego equiv proj diag1}
\Pi_{k\,\boldsymbol{\nu}}(x,x)':=
d_{k\,\boldsymbol{\nu}}\,\int_G \rho'(g,x)\,\overline{\chi_{k\,\boldsymbol{\nu}}(g)}\,\Pi\left(
\tilde{\mu}_{g^{-1}}(x),x\right)\,\mathrm{d}^HV_G(g).
\end{equation}
Since $\tilde{\mu}$ is free on $V_{\mathcal{O}_{\boldsymbol{\nu}}}$, the partial
function $\rho_x':=\rho'(\cdot,x)$ is supported on a small open neighborhood of the unit
$1_G\in G$, which we may assume to be diffeomorphic to an open neighborhood of 
$\mathbf{0}\in \mathfrak{g}$ by the exponential map.
Hence on the same neighborhood we may set $g=e^{\boldsymbol{\xi}}$
and express $\chi_{k\,\boldsymbol{\nu}}(g)$
by the Kirillov character formula (\ref{eqn:kirillovcfk}). Furthermore, since
$\tilde{\mu}_{g^{-1}}(x)\sim x$ for $\rho'_x(g)\neq 0$, we may also replace $\Pi$
by its description as an FIO (\ref{eqn:szego oif}).
After the rescaling $u\mapsto k\,u$, we obtain
\begin{eqnarray}
\label{eqn:foi+kir}
\lefteqn{\Pi_{k\,\boldsymbol{\nu}}(x,x)\sim \Pi_{k\,\boldsymbol{\nu}}(x,x)'   }\\
&\sim&k\,d_{k\,\boldsymbol{\nu}}\cdot\left(\frac{k}{2\,\pi}\right)^{n_{G}}\, 
\int_0^{+\infty}\,\mathrm{d}u\,
\int_{\mathfrak{g}'}\,\mathrm{d}^H\boldsymbol{\xi}\,\int_{\mathcal{O}_{\boldsymbol{\nu}}}\,
\mathrm{d}V_{\mathcal{O}_{\boldsymbol{\nu}}}(\boldsymbol{\lambda})
\left[
e^{\imath\,k\,\Gamma_x(u,\boldsymbol{\xi},\boldsymbol{\lambda})}\,
\mathcal{A}_{x,k}(u,\boldsymbol{\xi},\boldsymbol{\lambda})
\right]\nonumber
\end{eqnarray}
where
$$
\Gamma_x (u,\boldsymbol{\xi},\boldsymbol{\lambda}):=   u\,\psi\left(
\tilde{\mu}_{e^{-\boldsymbol{\xi}}}(x),x\right)-\langle \boldsymbol{\lambda},
\boldsymbol{\xi}  \rangle
$$
$$
\mathcal{A}_{x,k}(u,\boldsymbol{\xi},\boldsymbol{\lambda}):=
\rho'_x\left(e^{\boldsymbol{\xi}}\right) \,\mathcal{P} (\boldsymbol{\xi})\,s\left(
\tilde{\mu}_{e^{-\boldsymbol{\xi}}}(x),x,k\,u\right).
$$
Since $\tilde{\mu}_{e^{-\boldsymbol{\xi}}}(x)\sim x$ on the support of 
$\mathcal{A}_{x,k}$, we have in local coordinates
$\mathrm{d}_{(\tilde{\mu}_{e^{-\boldsymbol{\xi}}}(x), x)}\psi\sim (\alpha_x,-\alpha_x)$.
Hence
$
\partial_{\boldsymbol{\xi}}\Gamma_x (u,\boldsymbol{\xi},\boldsymbol{\lambda})
\sim u\,\Phi (m_x)-\boldsymbol{\lambda}$.
We then have an analogue of Lemma \ref{lem:cmpt supp u}, so that
integration in $\mathrm{d}u$ may be assumed to be compactly supported.
We express this by multiplying the amplitude in
(\ref{eqn:foi+kir}) by a bump function
$\rho=\rho(u)$ compactly supported in $(1/D,D)$ for some $D\gg 0$. 

Let $\gamma\in \mathcal{C}^\infty(\mathfrak{g})$ be $\ge 0$, supported on a 
ball of radius $2$ centered at the origin (say with respect to $\varphi^H$) and
$\equiv 1$ on a ball of radius $1$ centered at the origin. 
Let us define $\gamma_{k}\in  \mathcal{C}_c^\infty(\mathfrak{g})$ for $k=1,2,\ldots$ by setting
$$\gamma_{k}(\boldsymbol{\xi}):=\gamma \left(k^{1/2-\epsilon}\,\boldsymbol{\xi}  \right).$$
Let $\Pi_{k\,\boldsymbol{\nu}}(x,x)_1$ and $\Pi_{k\,\boldsymbol{\nu}}(x,x)_2$ be given
by the second line of (\ref{eqn:foi+kir}) multiplied by, respectively, 
$\gamma_{k}$ and $1-\gamma_{k}$.

\begin{lem}
\label{lem:pikD2}
$\Pi_{k\,\boldsymbol{\nu}}(x,x)_2=O\left(k^{-\infty}   \right)$
as $k\rightarrow +\infty$.
\end{lem}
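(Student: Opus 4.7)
My plan is to exploit the strictly positive imaginary part of the Szeg\"{o} phase $\psi$ recorded in (\ref{eqn:imaginary part}) to produce super-polynomial decay of the integrand on the cut-off region defining $\Pi_{k\,\boldsymbol{\nu}}(x,x)_2$, and then to check that this decay swamps the polynomial-in-$k$ growth of the prefactor and of the amplitude. In effect, no stationary-phase or integration-by-parts machinery should be needed here, since on this piece the phase already has large positive imaginary part.

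First, I would note that on the support of the cut-off $1-\gamma_{k}(\boldsymbol{\xi})$ one has $\|\boldsymbol{\xi}\|_{\varphi^H}\ge k^{\epsilon-1/2}$, while simultaneously $\boldsymbol{\xi}$ is confined to a fixed compact neighborhood of the origin in $\mathfrak{g}'$ dictated by the support of $\rho'_x$. Since $\tilde{\mu}$ is free on $V_{\mathcal{O}_{\boldsymbol{\nu}}}$, the evaluation map $\mathrm{val}_x:\mathfrak{g}\to T_xX$ is injective at every point of this set, so after possibly shrinking the neighborhood the orbit map $\boldsymbol{\xi}\mapsto \tilde{\mu}_{e^{-\boldsymbol{\xi}}}(x)$ is an embedding and there exist constants $c_1,c_2>0$, uniform for $x$ in a compact subset of the free locus, such that
$$
\mathrm{dist}_X\bigl(\tilde{\mu}_{e^{-\boldsymbol{\xi}}}(x),x\bigr)\;\ge\;c_1\,\|\boldsymbol{\xi}\|_{\varphi^H}.
$$
Combining this with (\ref{eqn:imaginary part}) and the compact support of the additional cut-off $\rho(u)$ in $u\in(1/D,D)$ coming from the analogue of Lemma \ref{lem:cmpt supp u}, I obtain
$$
\bigl|e^{\imath\,k\,\Gamma_x(u,\boldsymbol{\xi},\boldsymbol{\lambda})}\bigr|
\;=\;e^{-k\,u\,\Im\,\psi(\tilde{\mu}_{e^{-\boldsymbol{\xi}}}(x),x)}
\;\le\;e^{-c_2\,k\,\|\boldsymbol{\xi}\|_{\varphi^H}^{2}}
\;\le\;e^{-c_3\,k^{2\epsilon}}
$$
on the entire support of the integrand defining $\Pi_{k\,\boldsymbol{\nu}}(x,x)_2$, with $c_3>0$ independent of $k$.

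To conclude, I would collect the remaining factors: the explicit prefactor $k\,d_{k\,\boldsymbol{\nu}}\,(k/(2\pi))^{n_G}$ is $O\bigl(k^{1+2n_G}\bigr)$ by (\ref{eqn:kirillovcfkdimension}); the amplitude $\mathcal{A}_{x,k}$ is $O(k^d)$ because $s(\cdot,\cdot,ku)$ is of order $d$ in its third slot, cf.\ (\ref{eqn:amplitude szego expanded}), and $\rho(u)\mathcal{P}(\boldsymbol{\xi})\rho'_x(e^{\boldsymbol{\xi}})$ is uniformly bounded; and the $u$-, $\boldsymbol{\xi}$-, and $\boldsymbol{\lambda}$-integrations are over sets of bounded volume. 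Multiplying $O\bigl(k^{1+2n_G+d}\bigr)$ against $e^{-c_3\,k^{2\epsilon}}$ gives $O(k^{-\infty})$, which is the claim. The one point requiring a bit of care is the uniformity of the constants $c_1,c_2,c_3$ as $x$ varies over a compact subset of the free locus; this will be the main (mild) obstacle, and it follows from the smoothness and equivariance of $\tilde{\mu}$ together with the compactness of the ambient group and manifold.
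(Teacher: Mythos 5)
Your argument is correct, and it takes a genuinely different route from the one in the paper. The paper establishes the same key estimate
$\mathrm{dist}_X\bigl(\tilde{\mu}_{e^{-\boldsymbol{\xi}}}(x),x\bigr)\ge r_0\,k^{\epsilon-1/2}$ on $\mathrm{supp}(1-\gamma_k)$, but then uses it to bound $\big|\partial_u\Gamma_x\big|=\big|\psi\bigl(\tilde{\mu}_{e^{-\boldsymbol{\xi}}}(x),x\bigr)\big|$ \emph{from below} by $D_X r_0^2 k^{2\epsilon-1}$ and iterates integration by parts in $\mathrm{d}u$, gaining a factor $O(k^{-2\epsilon})$ per step (the same mechanism as in Proposition 5.2 of \cite{gp1}). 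You instead bypass integration by parts altogether: since $\langle\boldsymbol{\lambda},\boldsymbol{\xi}\rangle$ is real and $u$ is confined to $(1/D,D)$ by the analogue of Lemma \ref{lem:cmpt supp u}, the modulus of the exponential is exactly $e^{-k\,u\,\Im\psi}$, which the same distance estimate forces to be $\le e^{-c_3 k^{2\epsilon}}$; this sub-exponential decay then trivially overwhelms the $O(k^{1+2n_G+d})$ polynomial coming from the prefactor $k\,d_{k\boldsymbol{\nu}}\,(k/2\pi)^{n_G}$ and from the symbol estimate $s(\cdot,\cdot,ku)=O(k^d)$. Both arguments are valid; yours is more elementary and shorter, exploiting directly the strict positivity of $\Im\psi$ away from the diagonal rather than converting that positivity into a non-stationarity-of-phase statement. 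The paper's integration-by-parts route is perhaps more robust in situations where only a lower bound on the \emph{modulus} (not the imaginary part) of the $u$-derivative of the phase is available, but here, where $\Im\psi\ge D_X\,\mathrm{dist}_X^2$ holds globally, the direct pointwise bound is the cleaner option. Your remark about uniformity of the constants over a compact subset of the free locus is the right caveat and is handled identically in both arguments via the $S^1\times G$-invariance of $V_{\mathcal{O}}$ and compactness.
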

\begin{proof}
[Proof of Lemma \ref{lem:pikD2}]
On the support of 
$1-\gamma_{k}$, we have
$\|\boldsymbol{\xi}\|^H\ge k^{\epsilon-\frac{1}{•2}}$
in $\varphi^H$-norm.
Hence for a certain constant $r_0>0$ depending only on the choice of an
invariant open neighborhood $V_{\mathcal{O}}\subseteq X$ of $X_{\mathcal{O}}$ we have
\begin{equation}
\label{eqn:distance xi estimate}
\mathrm{dist}_X\left( \tilde{\mu}_{e^{-\boldsymbol{\xi}}}(x),x  \right)\ge 
r_0\,k^{\epsilon-\frac{1}{•2}}\qquad 
\big(x\in V_{\mathcal{O}},\, \boldsymbol{\xi}\in \mathrm{supp}(1-\gamma_{D_1,k})\big).
\end{equation}
Hence by (\ref{eqn:imaginary part})
\begin{equation}
\label{eqn:u deriv}
\big|\partial_u\Gamma_x (u,\boldsymbol{\xi},\boldsymbol{\lambda})  \big| = \big|\psi\left(
\tilde{\mu}_{e^{-\boldsymbol{\xi}}}(x),x\right)\big|\ge 
\big| \Im \psi\left(
\tilde{\mu}_{e^{-\boldsymbol{\xi}}}(x),x\right) \big|\ge 
D_X\,r_0^2\,k^{2\epsilon-1}.
\end{equation}
Iteratively integrating by parts in $\mathrm{d}u$ then implies the statement, since at each step we
introduce a factor $O\left( k^{-2\,\epsilon} \right)$ (see Proposition 5.2 of \cite{gp1}).
\end{proof}

Thus we are reduced to considering the asymptotics of $\Pi_{k\,\boldsymbol{\nu}}(x,x)_1$.
On the support of 
$\gamma_{k}$ we have 
$\|\boldsymbol{\xi}\|^H\le 2\,k^{\epsilon-\frac{1}{•2}}$.
Let us operate the rescaling 
$\boldsymbol{\xi}\mapsto \boldsymbol{\xi}/\sqrt{k}$, 
and rewrite $\Pi_{k\,\boldsymbol{\nu}}(x,x)_1$ as
\begin{eqnarray}
\label{eqn:foi+kir1}
\Pi_{k\,\boldsymbol{\nu}}(x,x)_1
&=&k^ {1-d_G/2}\,d_{k\,\boldsymbol{\nu}}\cdot\left(\frac{k}{2\,\pi}\right)^{n_{G}}\, 
\int_{1/D}^{D}\,\mathrm{d}u\,
\int_{\mathfrak{g}'}\,\mathrm{d}^H\boldsymbol{\xi}\,\int_{\mathcal{O}_{\boldsymbol{\nu}}}\,
\mathrm{d}V_{\mathcal{O}_{\boldsymbol{\nu}}}(\boldsymbol{\lambda})\nonumber\\
&&\left[
e^{\imath\,k\,\Gamma_x(u,\boldsymbol{\xi}/\sqrt{k},\boldsymbol{\lambda})}\,
\mathcal{A}_{x,k}(u,\boldsymbol{\xi}/\sqrt{k},\boldsymbol{\lambda})\,\rho (u)\,
\gamma \left( k^{-\epsilon}\,\boldsymbol{\xi}  \right)
\right].
\end{eqnarray}
Now integration in $\boldsymbol{\xi}$ is on an expanding ball of radius 
$O\left(k^{\epsilon}\right)$ centered at the origin.
In Heisenberg local coordinates at $x$, 
by Lemma 2.10 of \cite{pao-IJM} we have
\begin{eqnarray}
\label{eqn:heisenberg local coordinates1}
\lefteqn{
\tilde{\mu}_{e^{-\boldsymbol{\xi}/\sqrt{k}}}(x)
}\\
&=&x+\left(\frac{1}{\sqrt{k}}\,\big\langle \Phi(m_x),\boldsymbol{\xi})\big\rangle 
+R_3\left(\frac{1}{\sqrt{k}}\,\boldsymbol{\xi}\right),-\frac{1}{\sqrt{k}}\,\boldsymbol{\xi}_M(m_x)+R_2\left(\frac{1}{\sqrt{k}}\,\boldsymbol{\xi}\right)   \right).\nonumber
\end{eqnarray}
As in the proof of Theorem 1 of \cite{pao-IJM},
using the expansions in \S 3 of \cite{sz} one gets
\begin{eqnarray}
\label{eqn:heisenberg local coordinates2}
\lefteqn{
\Gamma_x \left(u,\boldsymbol{\xi}/\sqrt{k},\boldsymbol{\lambda}\right)
}\\
&=&\frac{1}{\sqrt{k}}\,\langle u\,\Phi (m_x)-\boldsymbol{\lambda},\boldsymbol{\xi}   \rangle 
+\frac{\imath\,u}{2\,k}\,\|\boldsymbol{\xi}_X(x)\|^2
+u\,R_3\left(   \frac{•\boldsymbol{\xi}}{\sqrt{k}}\right)\,e^{\imath\,\varsigma_{x,k}(\boldsymbol{\xi}
/\sqrt{k})},\nonumber
\end{eqnarray}
where 
$\varsigma_{x,k}(\boldsymbol{\xi})=
\big\langle \Phi(m_x),\boldsymbol{\xi})\big\rangle +R_3(\boldsymbol{\xi})$.
Hence we rewrite (\ref{eqn:foi+kir1}) as follows
\begin{eqnarray}
\label{eqn:foi+kir2}
\Pi_{k\,\boldsymbol{\nu}}(x,x)_1
&=&k^ {1-d_G/2}\,d_{k\,\boldsymbol{\nu}}\cdot\left(\frac{k}{2\,\pi}\right)^{n_{G}}\, 
\int_{1/D}^{D}\,\mathrm{d}u\,
\int_{\mathfrak{g}'}\,\mathrm{d}^H\boldsymbol{\xi}\,\int_{\mathcal{O}_{\boldsymbol{\nu}}}\,
\mathrm{d}V_{\mathcal{O}_{\boldsymbol{\nu}}}(\boldsymbol{\lambda})\nonumber\\
&&\left[
e^{\imath\,\sqrt{k}\,\Upsilon_x(u,\boldsymbol{\xi},\boldsymbol{\lambda})}\,
\mathcal{B}_{x,k}(u,\boldsymbol{\xi}/\sqrt{k},\boldsymbol{\lambda})\,
\gamma \left(D_1\, k^{-\epsilon}\,\boldsymbol{\xi}  \right)
\right],
\end{eqnarray}
where now
\begin{eqnarray*}
\Upsilon_x(u,\boldsymbol{\xi},\boldsymbol{\lambda})&:=&\langle u\,\Phi (m_x)-\boldsymbol{\lambda},\boldsymbol{\xi}   \rangle\\
\mathcal{B}_{x,k}(u,\boldsymbol{\xi}/\sqrt{k},\boldsymbol{\lambda})&:=&
e^{-\frac{u}{2}\,\|\boldsymbol{\xi}_X(x)\|^2}\cdot \rho (u)\,
\mathcal{A}_{x,k}(u,k^{-1/2}\,\boldsymbol{\xi},\boldsymbol{\lambda})\,
e^{u\,k\,R_3\left(   \frac{•\boldsymbol{\xi}}{\sqrt{k}}\right)\,e^{\imath\,\varsigma_{x,k}(\boldsymbol{\xi}
/\sqrt{k})}}
\end{eqnarray*}
We have $\boldsymbol{\xi}_X(x)\neq 0$ if $x\in V_{\mathcal{O}_{\boldsymbol{\nu}}}$,
$\boldsymbol{\xi}\neq 0$.

Under the present transversality assumption (Assumption \ref{asmpt:basic1}),
there exists $s_0>0$ such that 
$$
\big\| \Phi(m)-\boldsymbol{\lambda}    \big\|\ge s_0\cdot \mathrm{dist}_M(m,M_{\mathcal{O}}),
\qquad \,\forall\,m\in M,\, \forall \boldsymbol{\lambda}\in \mathcal{C}(\mathcal{O}_{\boldsymbol{\nu}}).
$$
Therefore, in the situation of the Theorem,
\begin{equation*}
\|\partial_{\boldsymbol{\xi}}\Upsilon_x(u,\boldsymbol{\xi},\boldsymbol{\lambda})\big\|
=\big\|u\,\Phi (m_x)-\boldsymbol{\lambda}\big\|
\ge \frac{•s_0}{•D} \cdot C\,k^{\epsilon-\frac{1}{•2}},\quad \forall\,
\boldsymbol{\lambda}\in \mathcal{O}_{\boldsymbol{\nu}},\quad 
\forall\,u\in \left( \frac{1}{D},D  \right).
\end{equation*}
The statement of the Theorem then follows by iteratively integrating 
by parts in $\mathrm{d}\boldsymbol{\xi}$, since each step introduces a
factor $O\left(k^{-\epsilon}\right)$.
\end{proof}

\section{Proof of Theorem \ref{thm:scal asymp}}

Before delving into the proof of Theorem \ref{thm:scal asymp}, let us make the following
remarks. 

Suppose $x\in X$, and let $\varpi_x:(\theta,\mathbf{v})\mapsto x+(\theta,\mathbf{v})$ be
a system of Heisenberg local coordinates at $x$. Then 
$\varpi_x$ induces an isomorphism $T_xX\cong \mathbb{R}\times \mathbb{R}^{2n}$, 
in terms of which we can give a meaning to the expression
$x+\upsilon$, when $\upsilon\in T_xX$ is small.
For some $c_1>c_2>0$ we have
\begin{equation}
\label{eqn:distance estimate}
c_2\,\|\upsilon_1-\upsilon_2\|\le \mathrm{dist}_X(x+\upsilon_1,x+\upsilon_2)
\le c_1\,\|\upsilon_1-\upsilon_2\|
\end{equation}
if $\upsilon_j\sim \mathbf{0}$.

Let $\mathfrak{t}_m'$ be as in (\ref{eqn:t_m' characterized});
we have the following characterization of the normal 
bundle $N\big(M_{\mathcal{O}_{\boldsymbol{\nu}}}\big)$ 
to $M_{\mathcal{O}_{\boldsymbol{\nu}}}$ in $M$, 
which can be proved by minor adaptations of the arguments used in 
Lemma 4.2 and Step 4.3 of \cite{gp1}.

\begin{lem}
\label{lem:normal bundle}
For any $m\in M_{\mathcal{O}_{\boldsymbol{\nu}}}$,
$N_m\big(M_{\mathcal{O}_{\boldsymbol{\nu}}}\big)=J_m\big( \mathfrak{t}_m'  \big)$.

\end{lem}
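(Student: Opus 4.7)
The plan is to identify $T_mM_{\mathcal{O}_{\boldsymbol{\nu}}}$ as the symplectic orthocomplement of $\mathrm{val}_m(\mathfrak{t}_m')$ in $T_mM$, and then to convert Riemannian orthogonality into symplectic orthogonality via $J_m$, using the K\"ahler identity $\rho^M_m(\cdot,\cdot)=\omega_m(\cdot,J_m\cdot)$. All containments will be turned into equalities through \emph{iff} statements, so no separate dimension count is strictly needed (though the codimensions already match: $\dim \mathfrak{t}_m'=r_G-1=\mathrm{codim}_M M_{\mathcal{O}_{\boldsymbol{\nu}}}$, and $\mathrm{val}_m$ is injective on $\mathfrak{t}_m'$ by the final paragraph before Definition \ref{defn:Dphim}).

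First, by Assumption \ref{asmpt:basic1} $\Phi$ is transverse to $\mathcal{C}(\mathcal{O}_{\boldsymbol{\nu}})$, so
$$
T_mM_{\mathcal{O}_{\boldsymbol{\nu}}}=\bigl(\mathrm{d}_m\Phi\bigr)^{-1}\bigl(T_{\Phi(m)}\mathcal{C}(\mathcal{O}_{\boldsymbol{\nu}})\bigr).
$$
Next, I would compute the annihilator $T_{\Phi(m)}\mathcal{C}(\mathcal{O}_{\boldsymbol{\nu}})^0\subseteq\mathfrak{g}$. The cone tangent space is spanned by the radial direction $\mathbb{R}\cdot\Phi(m)$ together with the coadjoint orbit tangent space $\{\mathrm{coad}_{\boldsymbol{\eta}}\Phi(m):\boldsymbol{\eta}\in\mathfrak{g}\}$. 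A vector $\boldsymbol{\xi}\in\mathfrak{g}$ annihilates the first factor iff $\boldsymbol{\xi}\in\Phi(m)^0$, and annihilates the second iff $\langle\Phi(m),[\boldsymbol{\eta},\boldsymbol{\xi}]\rangle=0$ for all $\boldsymbol{\eta}$, i.e.\ $\mathrm{coad}_{\boldsymbol{\xi}}\Phi(m)=0$. Since $\Phi(m)$ is regular (its stabilizer is the maximal torus $T_m$ with Lie algebra $\mathfrak{t}_m$ by (\ref{eqn:t_m characterized})), this forces $\boldsymbol{\xi}\in\mathfrak{t}_m$. Hence
$$
T_{\Phi(m)}\mathcal{C}(\mathcal{O}_{\boldsymbol{\nu}})^0=\mathfrak{t}_m\cap\Phi(m)^0=\mathfrak{t}_m'.
$$

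Third, using the defining identity of the moment map, $\mathrm{d}_m\Phi(\mathbf{v})(\boldsymbol{\xi})=-\omega_m(\mathbf{v},\boldsymbol{\xi}_M(m))$, the condition $\mathrm{d}_m\Phi(\mathbf{v})\in T_{\Phi(m)}\mathcal{C}(\mathcal{O}_{\boldsymbol{\nu}})$ is equivalent to $\omega_m(\mathbf{v},\mathrm{val}_m(\boldsymbol{\eta}))=0$ for every $\boldsymbol{\eta}\in\mathfrak{t}_m'$. Thus
$$
T_mM_{\mathcal{O}_{\boldsymbol{\nu}}}=\mathrm{val}_m(\mathfrak{t}_m')^{\perp_{\omega_m}}.
$$

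Finally, applying the K\"ahler relation, $\mathbf{w}\in N_m(M_{\mathcal{O}_{\boldsymbol{\nu}}})$ iff $\omega_m(\mathbf{v},J_m\mathbf{w})=0$ for every $\mathbf{v}\in T_mM_{\mathcal{O}_{\boldsymbol{\nu}}}$, i.e.\ iff $J_m\mathbf{w}\in T_mM_{\mathcal{O}_{\boldsymbol{\nu}}}^{\perp_{\omega_m}}$. By symplectic biduality and the previous step, this last space equals $\mathrm{val}_m(\mathfrak{t}_m')$. Applying $-J_m$ to both sides yields $\mathbf{w}\in J_m\bigl(\mathrm{val}_m(\mathfrak{t}_m')\bigr)$, which by definition is $J_m(\mathfrak{t}_m')$ in the notation of the statement.

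The only delicate point is making sure the sign conventions for the moment map are consistent (the paper uses $(M,J,2\omega)$ with lifted contact vector fields as in (\ref{eqn:contact}), and this needs to be tracked), and verifying that the injectivity of $\mathrm{val}_m$ on $\mathfrak{t}_m'$ (which is stated just before Definition \ref{defn:Dphim} and follows from Remark \ref{rem:equiv transv lf}) indeed promotes the chain of equivalences into the equality of vector spaces of matching dimension. Apart from that bookkeeping, the argument is a direct linear-algebra manipulation along the lines of Lemma 4.2 and Step 4.3 of \cite{gp1}.
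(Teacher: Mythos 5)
Your proof is correct. The paper itself gives no argument here, deferring to Lemma~4.2 and Step~4.3 of \cite{gp1}; the route you take --- transversality gives $T_mM_{\mathcal{O}_{\boldsymbol{\nu}}}=(\mathrm{d}_m\Phi)^{-1}\bigl(T_{\Phi(m)}\mathcal{C}(\mathcal{O}_{\boldsymbol{\nu}})\bigr)$, the annihilator of that cone tangent space is $\mathfrak{t}_m'$ because $\Phi(m)$ is regular with stabilizer algebra $\mathfrak{t}_m$, the moment-map differential identity converts this into $\mathrm{val}_m(\mathfrak{t}_m')^{\perp_{\omega_m}}$, and $J_m$ interchanges $\rho^M_m$- and $\omega_m$-orthocomplements via biduality --- is exactly the standard linear-algebra argument the citation is pointing to. Your two caveats are the right ones and both are innocuous: the moment-map sign and the factor of $2$ in the symplectic form $2\omega$ only ever appear inside vanishing conditions, and the injectivity of $\mathrm{val}_m$ on $\mathfrak{t}_m'$ (via Remark~\ref{rem:equiv transv lf}) together with $\dim\mathfrak{t}_m'=r_G-1=\mathrm{codim}_M\,M_{\mathcal{O}_{\boldsymbol{\nu}}}$ confirms that the bidual equalities produce subspaces of the correct dimension.
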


Furthermore, we may identify the normal bundle of 
$X_{\mathcal{O}_{\boldsymbol{\nu}}}\subseteq X$, 
$N\big(X_{\mathcal{O}_{\boldsymbol{\nu}}}\big)$, 
with the pull-back of
$N\big(M_{\mathcal{O}_{\boldsymbol{\nu}}}\big)$; even more explicitly,
for every $x\in X_{\mathcal{O}_{\boldsymbol{\nu}}}$ we have with $m_x=\pi(x)$
$$
N_x\big(X_{\mathcal{O}_{\boldsymbol{\nu}}}\big)=
N_{m_x}\big(X_{\mathcal{O}_{\boldsymbol{\nu}}}\big)^\sharp.
$$
Hence there is an orthogonal direct sum
\begin{equation}
\label{eqn:direct sum orbit}
N_{m_x}\big(X_{\mathcal{O}_{\boldsymbol{\nu}}}\big)^\sharp
\oplus \mathfrak{g}_X(x)\oplus
{\mathfrak{g}_M(m_x)^{\perp_h}}^\sharp\subseteq T_xX.
\end{equation}

We then have the following consequence, whose proof is omitted.

\begin{lem}
\label{lem:positive distance resc}
Suppose $x\in X_{\mathcal{O}_{\boldsymbol{\nu}}}$, and choose a system
of Heisenberg local coordinates at $x$.
Then there exists $\delta>0$ such that
for any choice of
$\boldsymbol{\xi}\in \mathfrak{g}$,
$\mathbf{v}_j\in N_{m_x}\big(X_{\mathcal{O}_{\boldsymbol{\nu}}}\big)$,
$\mathbf{w}_j\in {\mathfrak{g}_M(m_x)^{\perp_h}}$ of sufficiently small norm
we have
$$
\mathrm{dist}_X\left(\tilde{\mu}_{e^{-\boldsymbol{\xi}}}\big(x+(\mathbf{v}_1+\mathbf{w}_1)\big),
x+(\mathbf{v}_2+\mathbf{w}_2)   \right)\ge 
\delta\,\|\boldsymbol{\xi}\|^\varphi.
$$
Furthermore, $\delta$ may be chosen uniformly on $X_{\mathcal{O}_{\boldsymbol{\nu}}}$.
\end{lem}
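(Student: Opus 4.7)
The plan is to reduce the desired distance bound to a Pythagorean estimate in $T_xX$ based on the orthogonal decomposition (\ref{eqn:direct sum orbit}), after identifying the principal displacement vector in Heisenberg local coordinates at $x$.

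First, I would apply (\ref{eqn:heisenberg local coordinates}) with $g = 1_G$ and $\imath\boldsymbol{\vartheta}$ replaced by $\boldsymbol{\xi}$, and extend it to nearby basepoints by smoothness of $\tilde{\mu}$, to obtain
\[
\tilde{\mu}_{e^{-\boldsymbol{\xi}}}\bigl(x + (\theta,\mathbf{u})\bigr) = x + \Bigl(\theta + \langle \Phi(m_x),\boldsymbol{\xi}\rangle + R,\; \mathbf{u} - \boldsymbol{\xi}_M(m_x) + R'\Bigr),
\]
where $R, R'$ are higher-order remainders in $(\boldsymbol{\xi},\theta,\mathbf{u})$. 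Specializing to $(\theta,\mathbf{u}) = (0,\mathbf{v}_1 + \mathbf{w}_1)$, subtracting $x + (\mathbf{v}_2 + \mathbf{w}_2)$, and using (\ref{eqn:contact}) to rewrite $\langle \Phi(m_x),\boldsymbol{\xi}\rangle\,\partial_\theta|_x - \boldsymbol{\xi}_M(m_x)^\sharp = -\boldsymbol{\xi}_X(x)$, the Heisenberg-coordinate displacement between the two points differs from
\[
\upsilon_0 := -\boldsymbol{\xi}_X(x) + (\mathbf{v}_1 - \mathbf{v}_2)^\sharp + (\mathbf{w}_1 - \mathbf{w}_2)^\sharp \in T_xX
\]
by a term that is of strictly higher order in $(\boldsymbol{\xi},\mathbf{v}_j,\mathbf{w}_j)$.

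Next, the three summands of $\upsilon_0$ lie respectively in the mutually $\rho^X$-orthogonal subspaces $\mathfrak{g}_X(x)$, $N_{m_x}(X_{\mathcal{O}_{\boldsymbol{\nu}}})^\sharp$, and ${\mathfrak{g}_M(m_x)^{\perp_h}}^\sharp$ featured in (\ref{eqn:direct sum orbit}), so Pythagoras yields $\|\upsilon_0\|_x \geq \|\boldsymbol{\xi}_X(x)\|_x$. By Remark \ref{rem:equiv transv lf} (together with Assumption \ref{asmpt:basic1}) the evaluation map $\mathrm{val}_x:\mathfrak{g}\to T_xX$ is injective at every $x \in X_{\mathcal{O}_{\boldsymbol{\nu}}}$; compactness of $X_{\mathcal{O}_{\boldsymbol{\nu}}}$ and continuity of $x\mapsto\|\mathrm{val}_x(\cdot)\|$ then furnish a uniform constant $c_0 > 0$ with $\|\boldsymbol{\xi}_X(x)\|_x \geq c_0\,\|\boldsymbol{\xi}\|^\varphi$ for every $x$ and $\boldsymbol{\xi}$.

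For $\boldsymbol{\xi},\mathbf{v}_j,\mathbf{w}_j$ of sufficiently small norm the remainder from the first step is dominated by $(c_0/2)\,\|\boldsymbol{\xi}\|^\varphi$, so by the triangle inequality the full coordinate displacement has norm at least $(c_0/2)\,\|\boldsymbol{\xi}\|^\varphi$; the distance estimate (\ref{eqn:distance estimate}) then yields the claim with $\delta := c_2 c_0 / 2$, the constant $c_2$ being uniform on $X_{\mathcal{O}_{\boldsymbol{\nu}}}$ by smooth dependence of Heisenberg frames on the basepoint. I expect the principal technical obstacle to be the bookkeeping required to check that all constants involved (the Heisenberg expansion remainders, the injectivity bound $c_0$, and the norm-to-distance equivalence $c_2$) can be chosen uniformly over the compact submanifold $X_{\mathcal{O}_{\boldsymbol{\nu}}}$; once that uniformity is in hand, the orthogonal decomposition forces the estimate.
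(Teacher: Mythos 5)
The paper explicitly omits the proof of this lemma, offering instead precisely the ingredients you use: equation (\ref{eqn:distance estimate}), Lemma \ref{lem:normal bundle}, and the orthogonal decomposition (\ref{eqn:direct sum orbit}). Your argument is correct and is plainly the intended one: linearize $\tilde{\mu}_{e^{-\boldsymbol{\xi}}}(x+(\mathbf{v}_1+\mathbf{w}_1))$ in Heisenberg coordinates, identify the principal displacement $-\boldsymbol{\xi}_X(x)+(\mathbf{v}_1-\mathbf{v}_2)^\sharp+(\mathbf{w}_1-\mathbf{w}_2)^\sharp$, use the $\rho^X$-orthogonality of the three summands to drop the $\mathbf{v}$- and $\mathbf{w}$-contributions, invoke local freeness (Remark \ref{rem:equiv transv lf} plus compactness) to bound $\|\boldsymbol{\xi}_X(x)\|_x$ below by a uniform multiple of $\|\boldsymbol{\xi}\|^\varphi$, absorb the higher-order remainder, and convert via (\ref{eqn:distance estimate}).

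Two small points worth making more explicit in a final write-up. First, the domination of the remainder by $(c_0/2)\|\boldsymbol{\xi}\|^\varphi$ holds because the remainder vanishes identically at $\boldsymbol{\xi}=\mathbf{0}$ (as $\tilde{\mu}_{e^{\mathbf{0}}}=\mathrm{id}$), so it factors as $\|\boldsymbol{\xi}\|$ times a quantity that is $O(\|\boldsymbol{\xi}\|+\|\mathbf{v}_j\|+\|\mathbf{w}_j\|)$; without this observation a cross-term such as $\|\mathbf{v}_1\|^2$ would not be controlled by $\|\boldsymbol{\xi}\|$. Second, the Pythagorean step uses that $\boldsymbol{\xi}_X(x)$ is $\rho^X$-orthogonal to the two horizontal summands: its vertical part $-\langle\Phi(m_x),\boldsymbol{\xi}\rangle\partial_\theta$ is automatically orthogonal to horizontal vectors, while its horizontal part $\boldsymbol{\xi}_M(m_x)^\sharp$ is $\rho^M$-orthogonal to $N_{m_x}(M_{\mathcal{O}_{\boldsymbol{\nu}}})$ (since $\mathfrak{g}_M(m_x)\subseteq T_{m_x}M_{\mathcal{O}_{\boldsymbol{\nu}}}$) and to $\mathfrak{g}_M(m_x)^{\perp_{h}}$ (by (\ref{eqn:Rhsomega})). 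Both points are implicit in what you wrote but are where the ``bookkeeping'' you flag actually lives.
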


\begin{proof}
[Proof of Theorem \ref{thm:scal asymp}]
We may replace $x$ by $\tilde{\mu}_h(x)$ for a suitable $h\in G$,
and assume without loss that
$\Phi(m_x)=\varsigma (m_x)\,\boldsymbol{\nu}$.
Hence, we may assume that
\begin{equation}
\label{eqn:reductiontmtm'}
\mathfrak{t}_m=\mathfrak{t},\quad \mathfrak{t}_m'=\mathfrak{t}_{\boldsymbol{\nu}}
\end{equation}
(see (\ref{eqn:t_m characterized}), (\ref{eqn:t_m' characterized})).
Let us set
$$x_{j,k}:=\frac{1}{\sqrt{k}}\,(\mathbf{v}_j+\mathbf{w}_j)   
\qquad (j=1,2;\, k>0).$$
and replace (\ref{eqn:szego equiv proj diag}) by
\begin{equation}
\label{eqn:szego equiv proj near diag}
\Pi_{k\,\boldsymbol{\nu}}(x_{1,k},x_{2,k})=
d_{k\,\boldsymbol{\nu}}\,\int_G \overline{\chi_{k\,\boldsymbol{\nu}}(g)}\,\Pi\left(
\tilde{\mu}_{g^{-1}}(x_{1,k}),x_{2,k}\right)\,\mathrm{d}^HV_G(g).
\end{equation}
Given that $x_{j,k}\rightarrow x$, the argument leading to
(\ref{eqn:szego equiv proj diag1}) now implies
$\Pi_{k\,\boldsymbol{\nu}}(x_{1,k},x_{2,k})\sim \Pi_{k\,\boldsymbol{\nu}}(x_{1,k},x_{2,k})'$,
where
\begin{equation}
\label{eqn:szego equiv proj near diag1}
\Pi_{k\,\boldsymbol{\nu}}(x_{1,k},x_{2,k})':=
d_{k\,\boldsymbol{\nu}}\,\int_G \rho'(g,x)\,\overline{\chi_{k\,\boldsymbol{\nu}}(g)}\,\Pi\left(
\tilde{\mu}_{g^{-1}}(x_{1,k}),x_{2,k}\right)\,\mathrm{d}^HV_G(g).
\end{equation}
We then obtain in place of (\ref{eqn:foi+kir})
\begin{eqnarray}
\label{eqn:foi+kirk}
\lefteqn{\Pi_{k\,\boldsymbol{\nu}}(x_{1,k},x_{2,k})\sim 
\Pi_{k\,\boldsymbol{\nu}}(x_{1,k},x_{2,k})'   }\\
&\sim&k\,d_{k\,\boldsymbol{\nu}}\cdot\left(\frac{k}{2\,\pi}\right)^{n_{G}}\,\, 
\int_0^{+\infty}\,\mathrm{d}u\,
\int_{\mathfrak{g}'}\,\mathrm{d}^H\boldsymbol{\xi}\,\int_{\mathcal{O}_{\boldsymbol{\nu}}}\,
\mathrm{d}V_{\mathcal{O}_{\boldsymbol{\nu}}}(\boldsymbol{\lambda})
\left[
e^{\imath\,k\,\Gamma_{x,k}(u,\boldsymbol{\xi},\boldsymbol{\lambda})}\,
\mathcal{B}_{x,k}(u,\boldsymbol{\xi},\boldsymbol{\lambda})
\right]\nonumber
\end{eqnarray}
where
\begin{equation}
\label{eqn:gamma xk}
\Gamma_{x,k} (u,\boldsymbol{\xi},\boldsymbol{\lambda}):=   u\,\psi\left(
\tilde{\mu}_{e^{-\boldsymbol{\xi}}}(x_{1,k}),x_{2,k}\right)-\langle \boldsymbol{\lambda},
\boldsymbol{\xi}  \rangle
\end{equation}
\begin{equation}
\label{eqn:B xk}
\mathcal{B}_{x,k}(u,\boldsymbol{\xi},\boldsymbol{\lambda}):=
\rho'_x\left(e^{\boldsymbol{\xi}}\right) \,\mathcal{P} (\boldsymbol{\xi})\cdot 
s\left(\tilde{\mu}_{e^{-\boldsymbol{\xi}}}(x_{1,k}),x_{2,k},k\,u\right).
\end{equation}

Since $\tilde{\mu}_{e^{-\boldsymbol{\xi}}}(x_{1,k})\sim x_{2,k}$ on the support of 
$\mathcal{B}_{x,k}$, by the same argument used in the proof 
of Theorem \ref{thm:rapid decrease off locus resc} we may multiply the integrand
in (\ref{eqn:foi+kirk}) by the same cut-off function 
$\rho=\rho(u)$ without affecting the asymptotics, so as to assume
that integration in $\mathrm{d}u$ is supported in $(1/D,D)$ for some $D\gg 0$.

In view of Lemma \ref{lem:positive distance resc}, we have for $k\gg 0$
$$
\mathrm{dist}_X\left(\tilde{\mu}_{e^{-\boldsymbol{\xi}}}\big(x_{1,k})\big),
x_{2,k})   \right)\ge 
\delta\,\|\boldsymbol{\xi}\|^\varphi.
$$
Using this, we obtain an obvious analogue of (\ref{eqn:distance xi estimate}),
so that
we can reprove Lemma \ref{lem:pikD2} in the present setting.
Rescaling in $\boldsymbol{\xi}$, we obtain in place of (\ref{eqn:foi+kir1}):
\begin{eqnarray}
\label{eqn:foi+kir1k}
\lefteqn{
\Pi_{k\,\boldsymbol{\nu}}(x_{1,k},x_{2,k})
}\\
&\sim&k^ {1-d_G/2}\,d_{k\,\boldsymbol{\nu}}\cdot\left(\frac{k}{2\,\pi}\right)^{n_{G}}\,  
\int_{1/D}^{D}\,\mathrm{d}u\,
\int_{\mathfrak{g}}\,\mathrm{d}^H\boldsymbol{\xi}\,\int_{\mathcal{O}_{\boldsymbol{\nu}}}\,
\mathrm{d}V_{\mathcal{O}_{\boldsymbol{\nu}}}(\boldsymbol{\lambda})\nonumber\\
&&\left[
e^{\imath\,k\,\Gamma_{x,k}(u,\boldsymbol{\xi}/\sqrt{k},\boldsymbol{\lambda})}\,
\mathcal{B}_{x,k}(u,\boldsymbol{\xi}/\sqrt{k},\boldsymbol{\lambda})\,\rho (u)\,
\gamma \left( k^{-\epsilon}\,\boldsymbol{\xi}  \right)
\right].\nonumber
\end{eqnarray}

In view of Corollary 2.2 of \cite{pao-IJM},
and using that
$\omega_{m_x}\big( \boldsymbol{\xi}_M(m), \mathbf{w}_1 \big)=0$,
in place of (\ref{eqn:heisenberg local coordinates1}) we have
\begin{eqnarray}
\label{eqn:heisenberg local coordinates1k}
\tilde{\mu}_{e^{-\boldsymbol{\xi}/\sqrt{k}}}(x_{1,k})
=x+\left(\Theta_{k,1},V_{k,1}\right).
\end{eqnarray}
where
\begin{eqnarray}
\label{eqn:Theta_k1}
\Theta_{k,1}&=&\Theta_k(x,\boldsymbol{\xi},\mathbf{v}_1,\mathbf{w}_1)\\
&=&
\frac{1}{\sqrt{k}}\,\big\langle \Phi(m_x),\boldsymbol{\xi})\big\rangle 
+\frac{1}{k}\,\omega_{m_x}\big( \boldsymbol{\xi}_M(m_x), \mathbf{v}_1 \big)
+R_3\left(\frac{1}{\sqrt{k}}\,\boldsymbol{\xi},\frac{1}{\sqrt{k}}\,\mathbf{v}_1,
\frac{1}{\sqrt{k}}\,\mathbf{w}_1\right),\nonumber
\end{eqnarray}
\begin{eqnarray}
\label{eqn:V_k1}
V_{k,1}&=&V_k(x,\boldsymbol{\xi},\mathbf{v}_1,\mathbf{w}_1)\\
&=&
\frac{1}{\sqrt{k}}\,\big(\mathbf{v}_1+\mathbf{w}_1-\boldsymbol{\xi}_M(m_x)\big)+R_2\left(\frac{1}{\sqrt{k}}\,\boldsymbol{\xi},\frac{1}{\sqrt{k}}\,\mathbf{v}_1,
\frac{1}{\sqrt{k}}\,\mathbf{w}_1\right).\nonumber
\end{eqnarray}
We then have (see \S 3 of \cite{sz})
\begin{eqnarray}
\label{eqn:expansion psik}
\lefteqn{u\,\psi\left(
\tilde{\mu}_{e^{-\boldsymbol{\xi}/\sqrt{k}}}(x_{1,k}),x_{2,k}\right)}\\
&=&   \imath\,u\,\left[1-e^{\imath\,\Theta_k}\right]-\imath\,u\,\psi_2
\left(V_{k,1},\frac{1}{\sqrt{k}}\,(\mathbf{v}_2+\mathbf{w}_2)    \right)
+u\,R_3\left(\frac{\boldsymbol{\xi}}{\sqrt{k}},\frac{\mathbf{v}_j}{\sqrt{k}},
\frac{\mathbf{w}_j}{\sqrt{k}} \right).\nonumber
\end{eqnarray}
We have
\begin{eqnarray}
\label{eqn:exp 1st thetak}
\imath\,u\,\left[1-e^{\imath\,\Theta_{k,1}}\right]
&=& u\,\Theta_{k,1}+\frac{\imath\,u}{2}\,\Theta_{k,1}^2
+ u\,R_3\left(\frac{\boldsymbol{\xi}}{\sqrt{k}},\frac{\mathbf{v}_1}{\sqrt{k}},
\frac{\mathbf{w}_1}{\sqrt{k}} \right)\\
&=&\frac{u}{\sqrt{k}}\,\big\langle \Phi(m_x),\boldsymbol{\xi})\big\rangle
+\frac{u}{k}\,\omega_{m_x}\big( \boldsymbol{\xi}_M(m_x),\mathbf{v}_1\big)
+\frac{\imath\,u}{2\,k}\,\big\langle \Phi(m_x),\boldsymbol{\xi}  \big\rangle^2\nonumber\\
&&+u\,R_3\left(\frac{\boldsymbol{\xi}}{\sqrt{k}},\frac{\mathbf{v}_1}{\sqrt{k}},
\frac{\mathbf{w}_1}{\sqrt{k}} \right),\nonumber
\end{eqnarray}
and
\begin{eqnarray}
\label{eqn:psi2 vk}
\lefteqn{\psi_2
\left(V_{k,1},\frac{1}{\sqrt{k}}\,(\mathbf{v}_2+\mathbf{w}_2)    \right)}\\
&=&\frac{1}{k}\,\left[ -\imath\,\omega_{m_x}\big(\mathbf{v}_1+\mathbf{w}_1-\boldsymbol{\xi}_M(m_x),
\mathbf{v}_2+\mathbf{w}_2    \big)  -\frac{1}{2}\,\big\|\big (\mathbf{v}_1-\mathbf{v}_2)+
(\mathbf{w}_1-\mathbf{w}_2)-\boldsymbol{\xi}_M(m_x)  \big\|_{m_x}^2\right]\nonumber\\
&&+R_3\left(\frac{\boldsymbol{\xi}}{\sqrt{k}},\frac{\mathbf{v}_j}{\sqrt{k}},
\frac{\mathbf{w}_j}{\sqrt{k}} \right).\nonumber
\end{eqnarray}
Since $\mathbf{w}_j\in \mathfrak{g}_M(m)^{\perp_{h_m}}$, we have
$\omega_m\big( \boldsymbol{\xi}_M(m_x),\mathbf{w}_2   \big)=0$ 
for any $\boldsymbol{\xi}\in \mathfrak{g}$.

\begin{lem}
\label{lem:v_j involution}
If $m\in M_{\mathcal{O}_{\boldsymbol{\nu}}}$ and 
$\mathbf{v}_j\in N_{m}(M_{\mathcal{O}_{\boldsymbol{\nu}}})$,
then $\omega_m(\mathbf{v}_1,\mathbf{v}_2)=0$.

\end{lem}

\begin{proof}
[Proof of Lemma \ref{lem:v_j involution}]
By Lemma \ref{lem:normal bundle}, there are $\boldsymbol{\eta}_j\in \mathfrak{t}'_m
\subseteq \mathfrak{t}_m$ such that 
$\mathbf{v}_j=J_m\big( {\boldsymbol{\eta}_j}_M(m)  \big)$ (given our previous reduction
we may assume $\mathfrak{t}_m=\mathfrak{t}$).
Hence 
$\omega_m(\mathbf{v}_1,\mathbf{v}_2)=
\omega_m \big ( {\boldsymbol{\eta}_1}_M(m), {\boldsymbol{\eta}_2}_M(m)\big)$.
On the other hand, $\mu$ restricts to a Hamiltonian action of the maximal torus
$T_m$, and therefore the vector fields $\boldsymbol{\eta}_M$, with $\boldsymbol{\eta}\in \mathfrak{t}$,
are all in symplectic involution. Hence 
$\omega \big ( {\boldsymbol{\eta}_1}_M, {\boldsymbol{\eta}_2}_M\big)\equiv 0$.
Hence $\omega_m(\mathbf{v}_1,\mathbf{v}_2)=0$.
\end{proof}

Given (\ref{eqn:direct sum orbit}) and Lemma \ref{lem:v_j involution},
in view of Definition \ref{defn:psi2}
we may rewrite (\ref{eqn:psi2 vk}) as
\begin{eqnarray}
\label{eqn:psi2 vkk}
\lefteqn{\psi_2
\left(V_{k,1},\frac{1}{\sqrt{k}}\,(\mathbf{v}_2+\mathbf{w}_2)    \right)}\\
&=&\frac{1}{k}\,\left[ 
 \psi_2(\mathbf{w}_1,\mathbf{w}_2)  -\frac{1}{2}\,\big\|\mathbf{v}_1-\mathbf{v}_2\|_{m_x}^2 
 +\imath\,\omega_{m_x}\big(\boldsymbol{\xi}_M(m_x),
\mathbf{v}_2\big)
-\frac{1}{2}\,\|\boldsymbol{\xi}_M(m_x)  \big\|_m^2\right]\nonumber\\
&&+R_3\left(\frac{\boldsymbol{\xi}}{\sqrt{k}},\frac{\mathbf{v}_j}{\sqrt{k}},
\frac{\mathbf{w}_j}{\sqrt{k}} \right).\nonumber
\end{eqnarray}

Hence, (\ref{eqn:expansion psik}) may be rewritten
\begin{eqnarray}
\label{eqn:expansion psikexp}
\lefteqn{u\,\psi\left(
\tilde{\mu}_{e^{-\boldsymbol{\xi}/\sqrt{k}}}(x_{1,k}),x_{2,k}\right)}\\
&=& \frac{u}{\sqrt{k}}\,\big\langle \Phi(m_x),\boldsymbol{\xi})\big\rangle
+\frac{u}{k}\,\omega_{m_x}\big( \boldsymbol{\xi}_M(m_x),\mathbf{v}_1\big)
+\frac{\imath\,u}{2\,k}\,\big\langle \Phi(m_x),\boldsymbol{\xi}  \big\rangle^2\nonumber\\
&&+u\,R_3\left(\frac{\boldsymbol{\xi}}{\sqrt{k}},\frac{\mathbf{v}_1}{\sqrt{k}},
\frac{\mathbf{w}_1}{\sqrt{k}} \right)\nonumber\\
&&-\frac{\imath\,u}{k}\,\left[ 
 \psi_2(\mathbf{w}_1,\mathbf{w}_2)  -\frac{1}{2}\,\big\|\mathbf{v}_1-\mathbf{v}_2\|_{m_x}^2 
 +\imath\,\omega_{m_x}\big(\boldsymbol{\xi}_M(m_x),
\mathbf{v}_2\big)
-\frac{1}{2}\,\|\boldsymbol{\xi}_M(m_x)  \big\|_{m_x}^2\right]\nonumber\\
&&+u\,R_3\left(\frac{\boldsymbol{\xi}}{\sqrt{k}},\frac{\mathbf{v}_j}{\sqrt{k}},
\frac{\mathbf{w}_j}{\sqrt{k}} \right).\nonumber
\end{eqnarray}
Whence
\begin{eqnarray}
\label{eqn:expansion psikexpik}
\lefteqn{\imath\,k\,u\,\psi\left(
\tilde{\mu}_{e^{-\boldsymbol{\xi}/\sqrt{k}}}(x_{1,k}),x_{2,k}\right)}\\
&=&\imath\,\sqrt{k} \,u\,\big\langle \Phi(m_x),\boldsymbol{\xi}\big\rangle
+\imath\,u\,\omega_{m_x}\big( \boldsymbol{\xi}_M(m_x),\mathbf{v}_1+\mathbf{v}_2\big)
-\frac{u}{2}\,\big\langle \Phi(m_x),\boldsymbol{\xi}  \big\rangle^2\nonumber\\
&&+u\,\left[ 
 \psi_2(\mathbf{w}_1,\mathbf{w}_2)  -\frac{1}{2}\,\big\|\mathbf{v}_1-\mathbf{v}_2\|_{m_x}^2 
-\frac{1}{2}\,\|\boldsymbol{\xi}_M(m_x)  \big\|_{m_x}^2\right]\nonumber\\
&&+\imath\, u\,k\,R_3\left(\frac{\boldsymbol{\xi}}{\sqrt{k}},\frac{\mathbf{v}_j}{\sqrt{k}},
\frac{\mathbf{w}_j}{\sqrt{k}} \right)\nonumber\\
&=&\imath\,\sqrt{k} \,u\,\big\langle \Phi(m_x),\boldsymbol{\xi}\big\rangle
+\imath\,u\,\omega_{m_x}\big( \boldsymbol{\xi}_M(m_x),\mathbf{v}_1+\mathbf{v}_2\big)
-\frac{u}{2}\,\|\boldsymbol{\xi}_X(x)  \big\|_x^2\nonumber\\
&&+u\,\left[ 
 \psi_2(\mathbf{w}_1,\mathbf{w}_2)  -\frac{1}{2}\,\big\|\mathbf{v}_1-\mathbf{v}_2\|_{m_x}^2 
\right]\nonumber\\
&&+\imath\, u\,k\,R_3\left(\frac{\boldsymbol{\xi}}{\sqrt{k}},\frac{\mathbf{v}_j}{\sqrt{k}},
\frac{\mathbf{w}_j}{\sqrt{k}} \right).\nonumber
\end{eqnarray}

In view of (\ref{eqn:gamma xk})
\begin{eqnarray}
\label{eqn:gamma xkk}
\lefteqn{\imath\,k\,\Gamma_{x,k}(u,\boldsymbol{\xi}/\sqrt{k},\boldsymbol{\lambda})}\\
&=& 
\imath\,k \left[  u\,\psi\left(
\tilde{\mu}_{e^{-\boldsymbol{\xi}/\sqrt{k}}}(x_{1,k}),x_{2,k}\right)-\left\langle \boldsymbol{\lambda},
\frac{1}{\sqrt{k}}\,\boldsymbol{\xi}\,  \right\rangle\right]\nonumber\\
&=&\imath\,\sqrt{k} \,\big\langle u\,\Phi(m_x)-\boldsymbol{\lambda},\boldsymbol{\xi}\big\rangle
+\imath\,u\,\omega_m\big( \boldsymbol{\xi}_M(m),\mathbf{v}_1+\mathbf{v}_2\big)
-\frac{u}{2}\,\|\boldsymbol{\xi}_X(x)  \|_x^2\nonumber\\
&&+u\,\left[ 
 \psi_2(\mathbf{w}_1,\mathbf{w}_2)  -\frac{1}{2}\,\|\mathbf{v}_1-\mathbf{v}_2\|_m^2 
\right]\nonumber\\
&&+\imath\, u\,k\,R_3\left(\frac{\boldsymbol{\xi}}{\sqrt{k}},\frac{\mathbf{v}_j}{\sqrt{k}},
\frac{\mathbf{w}_j}{\sqrt{k}} \right)\nonumber
\end{eqnarray}

Let $\mathrm{d}^\varphi\boldsymbol{\xi}$ be the Lebesgue measure on $\mathfrak{g}$
associated to $\varphi$. Then
\begin{equation}
\label{eqn:lebesgue measure compd}
\mathrm{d}^H\boldsymbol{\xi}=\mathrm{vol}^{\varphi}(G)^{-1}\,\mathrm{d}^\varphi \boldsymbol{\xi}.
\end{equation}
Using this and (\ref{eqn:kirillovcfkdimension}), 
(\ref{eqn:foi+kir1k}) may be rewritten as an oscillatory integral in 
$\sqrt{k}$ with a real phase, in the form
\begin{eqnarray}
\label{eqn:foi+kir1kvarphi}
\lefteqn{
\Pi_{k\,\boldsymbol{\nu}}(x_{1,k},x_{2,k})
}\\
&\sim&\frac{\mathrm{vol}\big(\mathcal{O}_{\boldsymbol{\nu}}\big)}{\mathrm{vol}^{\varphi}(G)} \cdot
k^ {1-d_G/2}\,\left(\frac{k}{2\,\pi}\right)^{2\,n_{G}}\,  
\nonumber\\
&&\cdot\int_{1/D}^{D}\,\mathrm{d}u\,
\int_{\mathfrak{g}}\,\mathrm{d}^\varphi\boldsymbol{\xi}\,\int_{\mathcal{O}_{\boldsymbol{\nu}}}\,
\mathrm{d}V_{\mathcal{O}_{\boldsymbol{\nu}}}(\boldsymbol{\lambda})\left[
e^{\imath\,\sqrt{k}\,\Upsilon_x (u,\boldsymbol{\xi},\boldsymbol{\lambda})}\,
\mathcal{C}_{x,k}(u,\boldsymbol{\xi},\boldsymbol{\lambda};\mathbf{v}_j,\mathbf{w}_j)
\right],\nonumber
\end{eqnarray}
with phase $\Upsilon_x$ and amplitude $\mathcal{C}_{x,k}$ given by,
respectively,
\begin{equation}
\label{eqn:Upsilon_x}
\Upsilon_x (u,\boldsymbol{\xi},\boldsymbol{\lambda}):=
\big\langle u\,\Phi(m_x)-\boldsymbol{\lambda},\boldsymbol{\xi}\big\rangle,
\end{equation}
\begin{eqnarray}
\label{eqn:Cxk}
\mathcal{C}_{x,k}(u,\boldsymbol{\xi},\boldsymbol{\lambda};\mathbf{v}_j,\mathbf{w}_j)&:=&
e^{\imath\,u\,\omega_m\big( \boldsymbol{\xi}_M(m),\mathbf{v}_1+\mathbf{v}_2\big)
-\frac{u}{2}\,\|\boldsymbol{\xi}_X(x)  \|_x^2+u\,\left[ 
 \psi_2(\mathbf{w}_1,\mathbf{w}_2)  -\frac{1}{2}\,\|\mathbf{v}_1-\mathbf{v}_2\|_{m_x}^2 
\right]}\nonumber\\
&&\cdot \mathcal{B}_{x,k}'(u,\boldsymbol{\xi}/\sqrt{k},\boldsymbol{\lambda})
\end{eqnarray}
where
\begin{eqnarray}
\label{eqn:B'xk}
\lefteqn{\mathcal{B}_{x,k}'(u,\boldsymbol{\xi}/\sqrt{k},\boldsymbol{\lambda})}\\
&=&
e^{\imath\, u\,k\,R_3\left(\frac{\boldsymbol{\xi}}{\sqrt{k}},\frac{\mathbf{v}_j}{\sqrt{k}},
\frac{\mathbf{w}_j}{\sqrt{k}} \right)}\cdot 
\mathcal{B}_{x,k}(u,\boldsymbol{\xi}/\sqrt{k},\boldsymbol{\lambda})\,\rho (u)\,
\gamma \left( k^{-\epsilon}\,\boldsymbol{\xi}  \right).\nonumber
\end{eqnarray}
There exists $r_{\boldsymbol{\nu}}>0$, depending only on $\boldsymbol{\nu}$, such that 
$\|\boldsymbol{\xi}_X(x)\|_x\ge r_{\boldsymbol{\nu}}\,\|\boldsymbol{\xi}\|^\varphi$,
$\forall\,x\in X_{\mathcal{O}_{\boldsymbol{\nu}}}$.
Furthermore, Taylor expansion at the origin yields an asymptotic expansion 
of the form
\begin{equation}
\label{eqn:Bxk'expanded}
\mathcal{B}_{x,k}'(u,\boldsymbol{\xi}/\sqrt{k},\boldsymbol{\lambda})\sim
\gamma \left( k^{-\epsilon}\,\boldsymbol{\xi}  \right)\cdot 
\sum_{j\ge 0}k^{d-j/2}\,P_j(m_x,u;\boldsymbol{\xi},\mathbf{v}_j,\mathbf{w}_j),
\end{equation}
where $P_j(m_x,u;\cdot,\cdot,\cdot)$ is a polynomial of degree $\le 3\,j$,
and parity $j$ (recall that 
$\|\boldsymbol{\xi}\|^{\varphi},\,\|\mathbf{v}_j\|,\,\|\mathbf{w}_j\|\le C'\,
k^{\epsilon}$ for some fixed $C'>0$, and that $\epsilon\in (0,1/6)$). 
In view of (\ref{eqn:amplitude szego expanded}) and (\ref{eqn:s_0heis}),
\begin{equation}
\label{eqn:P_0}
P_j(m_x,u;\boldsymbol{\xi},\mathbf{v}_j,\mathbf{w}_j)=\left(\frac{u}{\pi}\right)^d.
\end{equation}
The expansion may be integrated term by term.

Recall that we have reduced to the case where 
$\Phi (m_x)\in \mathbb{R}_+\,\boldsymbol{\nu}$, hence
$\Phi(m_x)=\varsigma (m_x)\,\boldsymbol{\nu}$ (see (\ref{eqn:hmzetam})).

\begin{lem}
\label{lem:localization in lambda}
Let $\mathcal{O}'\Subset \mathcal{O}''\subset\mathcal{O}_{\boldsymbol{\nu}}$
be suitably small neighborhoods of $\boldsymbol{\nu}$. 
Let $\varrho_{\boldsymbol{\nu}}\in \mathcal{C}^\infty_c(\mathcal{O}'')$ be
$\mathcal{C}^\infty$, $\ge 0$, and $\equiv 1$ on $\mathcal{O}'$. Then the asymptotics of
 (\ref{eqn:foi+kir1kvarphi}) are unchanged, it the integrand is multiplied by
 $\varrho_{\boldsymbol{\nu}}(\boldsymbol{\lambda})$.
\end{lem}

\begin{proof}
[Proof of Lemma \ref{lem:localization in lambda}]
Since the adjoint action is unitary,
$\mathcal{O}_{\boldsymbol{\nu}}\cap \mathbb{R}_+\,\boldsymbol{\nu}=\{\boldsymbol{\nu}\}$.
Hence, by 
(\ref{eqn:Upsilon_x}) there exists $a_0>0$ such that
$$
\|\partial_{\boldsymbol{\xi}}\Upsilon_x\|=\|u\,\Phi(m_x)-\boldsymbol{\lambda}\|^\varphi\ge a_0,$$ 
for all $u>0$ and $\boldsymbol{\lambda}\in 
\mathrm{supp}(1-\varrho_{\boldsymbol{\nu}})$.
The claim follows integrating by parts in $\boldsymbol{\xi}$, which is legitimate
in view of the cut-off and the exponential factor.

\end{proof}

In the following, we shall redefine 
$\mathcal{C}_{x,k}$ implicitly incorporating the factor $\varrho_{\boldsymbol{\nu}}(\boldsymbol{\lambda})$,
so that integration in $\mathrm{d}V_{\mathcal{O}_{\boldsymbol{\nu}}}(\boldsymbol{\lambda})$
is over $\mathcal{O}''\subset\mathcal{O}_{\boldsymbol{\nu}}$.

We have an equivariant diffeomorphism 
\begin{equation}
\label{eqn:beta orbit}
\beta:g\,T\in G/T\mapsto 
g\cdot\boldsymbol{\nu}
:=\mathrm{Coad}_g(\boldsymbol{\nu})\in \mathcal{O}_{\boldsymbol{\nu}}.
\end{equation}
Let $\mathrm{d}^HV_{G/T}$ be the Haar measure on $G/T$; then
\begin{equation}
\label{eqn:beta covrb}
\beta^*\big(\mathrm{d}V_{\mathcal{O}_{\boldsymbol{\nu}}}  \big)=
\mathrm{vol}(\mathcal{O}_{\boldsymbol{\nu}})\,\mathrm{d}^HV_{G/T}.
\end{equation}
Furthermore, in view of the factor $\beta^*(\varrho_{\boldsymbol{\nu}})$ which is left implicit,
integration over $\mathcal{O}''\subset\mathcal{O}_{\boldsymbol{\nu}}$ in
(\ref{eqn:foi+kir1kvarphi}) gets replaced by integration over a small neighborhood
of $e_G\,T\in G/T$, according 
to (\ref{eqn:beta covrb}). Let us introduce local coordinates on $G/T$ near
$e_G\,T$  by composing the projection $\pi_{G/T}:G\rightarrow G/T$ with the
restriction of the exponential map of $G$ to the Euclidean orthocomplement
$\mathfrak{t}^{\perp_\varphi}\subset \mathfrak{g}$ of $\mathfrak{t}$ w.r.t.
$\varphi$:
$$
E:\boldsymbol{\gamma}\in \mathfrak{t}^{\perp_{\varphi}}
\mapsto e^{\boldsymbol{\gamma}}\,T\in G/T.
$$
When restricted to a small open neighborhood of the origin, $E$ is a diffeomorphism
onto its image, hence a local chart for $G/T$ centered at $e_G\,T$;
we have an isomorphism $\mathfrak{t}^{\perp_{\varphi}}\cong T_{e_G\,T}(G/T)$. The
Lebesgue measure on $\mathfrak{t}^{\perp_{\varphi}}$ associated to the restriction 
$\varphi'$ of $\varphi$ will be denoted $\mathrm{d}^\varphi\boldsymbol{\gamma}$. 
Being $T$-invariant, $\varphi'$ determines an equivariant Riemannian metric 
on $G/T$, whose associated Riemannian density and volume 
will be denoted $\mathrm{d}^\varphi V_{G/T}$ and $\mathrm{vol}^\varphi (G/T)$, respectively.
Then
\begin{equation}
\label{eqn:Estella}
E^*(\mathrm{d}^\varphi V_{G/T})=\mathcal{R}(\boldsymbol{\gamma})\,
\mathrm{d}^\varphi\boldsymbol{\gamma},\quad 
\mathcal{R}(\boldsymbol{\gamma})=1+R_1(\boldsymbol{\gamma}).
\end{equation}
Viewing $G$ as a principal $T$-bundle over $G/T$, by fiber integration one obtains
$\mathrm{vol}^\varphi (G/T)=\mathrm{vol}^\varphi (G)/\mathrm{vol}^\varphi (T)$.
Clearly,
\begin{equation}
\label{eqn:G/Tdensities}
\mathrm{d}^\varphi V_{G/T}=\mathrm{vol}^\varphi (G/T)\,\mathrm{d}^H V_{G/T}.
\end{equation}
Hence, if we view $\beta\circ E$, restricted to a small neighborhood of the
origin in $\mathfrak{t}^{\perp_\varphi}$ as a local coordinate chart on $\mathcal{O}_{\boldsymbol{\nu}}$,
we obtain by (\ref{eqn:Estella}) and (\ref{eqn:G/Tdensities}):
\begin{equation}
\label{eqn:betaEpb}
(\beta\circ E)^*(\mathrm{d}V_{\mathcal{O}_{\boldsymbol{\nu}}})=
\mathrm{vol}(\mathcal{O}_{\boldsymbol{\nu}})\,E^*(\mathrm{d}^HV_{G/H})=
\mathrm{vol}(\mathcal{O}_{\boldsymbol{\nu}})\,
\frac{\mathrm{vol}^\varphi(T)}{\mathrm{vol}^\varphi(G)}\,\mathcal{R}(\boldsymbol{\gamma})\,
\mathrm{d}^\varphi\boldsymbol{\gamma}.
\end{equation}

With these substitutions, recalling 
(\ref{eqn:beta orbit})
we may rewrite (\ref{eqn:foi+kir1kvarphi})
in the following manner:
\begin{eqnarray}
\label{eqn:foi+kir1kvarphiG/T}
\Pi_{k\,\boldsymbol{\nu}}(x_{1,k},x_{2,k})
&\sim&\mathrm{vol}\big(\mathcal{O}_{\boldsymbol{\nu}}\big)^2\cdot
\frac{\mathrm{vol}^{\varphi}(T)}{\mathrm{vol}^{\varphi}(G)^2} \cdot
k^ {1-d_G/2}\,\left(\frac{k}{2\,\pi}\right)^{2\,n_{G}}\,  
\\
&&\int_{1/D}^{D}\,\mathrm{d}u\,
\int_{\mathfrak{g}}\,\mathrm{d}^\varphi\boldsymbol{\xi}\,\int_{\mathfrak{t}^{\perp_\varphi}}\,
\mathrm{d}^\varphi\,\boldsymbol{\gamma}\,\left[
e^{\imath\,\sqrt{k}\,\Upsilon_x (u,\boldsymbol{\xi},e^{\boldsymbol{\lambda}}\cdot \boldsymbol{\nu})}\,
\mathcal{C}_{x,k}\left(u,\boldsymbol{\xi},e^{\boldsymbol{\gamma}}\cdot \boldsymbol{\nu}
;\mathbf{v}_j,\mathbf{w}_j
\right)
\right].\nonumber
\end{eqnarray}
Let us set $g\cdot \boldsymbol{\nu}:=\mathrm{Ad}_g(\boldsymbol{\xi})$, for $g\in G$
and $\boldsymbol{\xi}\in \mathfrak{g}$. We have:
$$
e^{\boldsymbol{\gamma}}\cdot\boldsymbol{\nu}^\varphi=
\boldsymbol{\nu}^\varphi+\left[ \boldsymbol{\gamma}, \boldsymbol{\nu}^\varphi \right]+
R_2(\boldsymbol{\gamma})=
\boldsymbol{\nu}^\varphi-\left[\boldsymbol{\nu}^\varphi, \boldsymbol{\gamma}  \right]+
R_2(\boldsymbol{\gamma}).
$$
Since $\boldsymbol{\lambda}\mapsto \boldsymbol{\lambda}^\varphi$ intertwines the
coadjoint and adjoint actions,
(\ref{eqn:Upsilon_x}) may be rewritten as follows:
\begin{eqnarray}
\label{eqn:Upsilon_xrewritten}
\Upsilon_x (u,\boldsymbol{\xi},e^{\boldsymbol{\gamma}}\cdot \boldsymbol{\nu})&=&
\big\langle u\,\Phi(m_x)-e^{\boldsymbol{\gamma}}\cdot \boldsymbol{\nu},\boldsymbol{\xi}\big\rangle
\nonumber\\
&=&
\varphi \left(  u\,\Phi(m_x)^\varphi-e^{\boldsymbol{\gamma}}\cdot\boldsymbol{\nu}^\varphi,\boldsymbol{\xi}                          \right)\nonumber\\
&=&
\varphi \left(  u\,\Phi(m_x)^\varphi-\boldsymbol{\nu}^\varphi+\left[\boldsymbol{\nu}^\varphi, \boldsymbol{\gamma}  \right]+
R_2(\boldsymbol{\gamma}),\boldsymbol{\xi}\right)\nonumber\\
&=&
\varphi \left(  \big(u\,\varsigma(m_x)-1\big)\,\boldsymbol{\nu}^\varphi+\left[\boldsymbol{\nu}^\varphi, \boldsymbol{\gamma}  \right]+
R_2(\boldsymbol{\gamma}),\boldsymbol{\xi}\right);
\end{eqnarray}
here $R_2(\boldsymbol{\gamma})$ is real-valued.
In terms of the $\varphi$-orthogonal direct sum decompositions
$$
\mathfrak{t}:=\mathrm{span}\left( \boldsymbol{\nu}^\varphi \right)\cap \mathfrak{t}_{\boldsymbol{\nu}},
\quad \mathfrak{g}=\mathrm{span}\left( \boldsymbol{\nu}^\varphi \right)
\oplus \mathfrak{t}_{\boldsymbol{\nu}}\oplus \mathfrak{t}^{\perp_\varphi},
$$
we shall write the general element of $\mathfrak{g}$ as 
$$
\boldsymbol{\xi}=s\,\boldsymbol{\nu}^\varphi_u
+\boldsymbol{\xi}'+\boldsymbol{\xi}'',
\quad \text{where}\quad s\in \mathbb{R},
\,\boldsymbol{\xi}'\in \mathfrak{t}_{\boldsymbol{\nu}},\,
\boldsymbol{\xi}''\in \mathfrak{t}^{\perp_\varphi}.
$$

Furthermore, we may introduce orthnormal basis of $\mathfrak{t}_{\boldsymbol{\nu}}$
and $\mathfrak{t}^{\perp_\varphi}$ w.r.t. $\varphi$, so as to unitarily identify
$\mathfrak{t}_{\boldsymbol{\nu}}\cong \mathbb{R}^{r_G-1}$ and $\mathfrak{t}^{\perp_\varphi}\cong 
\mathbb{R}^{2\,n_{\boldsymbol{\nu}}}$. Let $Z_{\boldsymbol{\nu}^{\varphi}}$ the skew-symmetric and
non-degenerate matrix representing $S_{\boldsymbol{\nu}^\varphi}$ w.r.t. the given 
orthonormal basis of $\mathfrak{t}^{\perp_{\varphi}}$.
Then (\ref{eqn:Upsilon_xrewritten}) may be rewritten:
\begin{eqnarray}
\label{eqn:Upsilon_xrewritten-on}
\lefteqn{
\Upsilon_{x,\boldsymbol{\xi'}} (u,s,\boldsymbol{\xi}'',\boldsymbol{\gamma}):=
\Upsilon_x (u,\boldsymbol{\xi},e^{\boldsymbol{\gamma}}\cdot \boldsymbol{\nu})}\\
&=&
\varphi \left(  \big(u\,\varsigma(m_x)-1\big)\,\boldsymbol{\nu}^\varphi+\left[\boldsymbol{\nu}^\varphi, \boldsymbol{\gamma}  \right]+
R_2(\boldsymbol{\gamma}),s\,\boldsymbol{\nu}^\varphi_u
+\boldsymbol{\xi}'+\boldsymbol{\xi}''\right)\nonumber\\
&=&s\,\big(u\,\varsigma(m_x)-1\big)\,\left\|\boldsymbol{\nu}^\varphi\right\|_\varphi
-\varphi \left( \boldsymbol{\gamma}, \left[\boldsymbol{\nu}^\varphi, \boldsymbol{\xi}''  \right]   \right)
+\varphi \left(
R_2(\boldsymbol{\gamma}),s\,\boldsymbol{\nu}^\varphi_u
+\boldsymbol{\xi}'+\boldsymbol{\xi}''\right)\nonumber\\
&=&s\,\big(u\,\varsigma(m_x)-1\big)\,\left\|\boldsymbol{\nu}^\varphi\right\|_\varphi
-\boldsymbol{\gamma}^t\,Z_{\boldsymbol{\nu}^{\varphi}} \, \boldsymbol{\xi}''   
+ 
R_2(\boldsymbol{\gamma})^t\,\left(s\,\boldsymbol{\nu}^\varphi_u
+\boldsymbol{\xi}'+\boldsymbol{\xi}''\right).\nonumber
\end{eqnarray}

We write (\ref{eqn:foi+kir1kvarphiG/T}) in the form
\begin{eqnarray}
\label{eqn:foi+kir1kvarphiG/Titerated}
\Pi_{k\,\boldsymbol{\nu}}(x_{1,k},x_{2,k})
&\sim&\mathrm{vol}\big(\mathcal{O}_{\boldsymbol{\nu}}\big)^2\cdot
\frac{\mathrm{vol}^{\varphi}(T)}{\mathrm{vol}^{\varphi}(G)^2} \cdot
k^ {1-d_G/2}\,\left(\frac{k}{2\,\pi}\right)^{2\,n_{G}}\,  
\\
&&
\cdot \int_{\mathfrak{t}_{\boldsymbol{\nu}}}\,\mathrm{d}^\varphi\boldsymbol{\xi}'\,
\big[  \mathcal{I}_{x,k}\left( \boldsymbol{\xi}' ;\mathbf{v}_j,\mathbf{w}_j \right) \big],
\nonumber
\end{eqnarray}
where
\begin{eqnarray}
\label{eqn:Ikxi}
\mathcal{I}_{x,k}\left( \boldsymbol{\xi}' ;\mathbf{v}_j,\mathbf{w}_j \right)&:=&
\int_{1/D}^{D}\,\mathrm{d}u\,\int_{-\infty}^{+\infty}\,\mathrm{d}s\,
\int_{\mathfrak{t}^{\perp_\varphi}}\,\mathrm{d}^\varphi\boldsymbol{\xi}''\,
\int_{\mathfrak{t}^{\perp_\varphi}}\,
\mathrm{d}^\varphi\,\boldsymbol{\gamma}\nonumber\\
&&\left[
e^{\imath\,\sqrt{k}\,\Upsilon_{x,\boldsymbol{\xi'}} (u,s,\boldsymbol{\xi}'',\boldsymbol{\gamma})}\,
\mathcal{C}_{x,k}\left(u,\boldsymbol{\xi},e^{\boldsymbol{\gamma}}\cdot \boldsymbol{\nu}
;\mathbf{v}_j,\mathbf{w}_j
\right)
\right].
\end{eqnarray}
We view $\mathcal{I}_{x,k}\left( \boldsymbol{\xi}'  \right)$ as an oscillatory 
integral depending on the parameter $\boldsymbol{\xi}'$, with real phase
$\Upsilon_{x,\boldsymbol{\xi'}} $. Using that $Z_{\boldsymbol{\nu}^{\varphi}} $ is non-degenerate,
and that $\boldsymbol{\gamma}$ is small in norm,
one obtains the following.

\begin{lem}
\label{lem:critical point Upsilon}
For any $\boldsymbol{\xi}'\in \mathfrak{t}_{\boldsymbol{\nu}}$,
$\Upsilon_{x,\boldsymbol{\xi'}} $ has a unique critical point, given by
$$
P_0=(u_0,s_0,\boldsymbol{\xi}''_0,\boldsymbol{\gamma}_0)=
\left( \frac{1}{\varsigma(m_x)}, 0,\mathbf{0},\mathbf{0}     \right).
$$
Hence $\Upsilon_{x,\boldsymbol{\xi'}}(P_0)=0$.
The Hessian matrix at the critical point is 
$$
H_{P_0}(\Upsilon_{x,\boldsymbol{\xi'}})=
\begin{pmatrix}
0&\varsigma(m_x)\,\left\|\boldsymbol{\nu}^\varphi\right\|_\varphi&\mathbf{0}^t&\mathbf{0}^t\\
\varsigma(m_x)\,\left\|\boldsymbol{\nu}^\varphi\right\|_\varphi&0&\mathbf{0}^t&\mathbf{0}^t\\
\mathbf{0}&\mathbf{0}&[0]&Z_{\boldsymbol{\nu}^{\varphi}}\\
\mathbf{0}&\mathbf{0}&-Z_{\boldsymbol{\nu}^{\varphi}}&
\left.\partial^2_{\boldsymbol{\gamma},\boldsymbol{\gamma}}\Upsilon_{x,\boldsymbol{\xi'}}
\right|_{P_0}
\end{pmatrix},
$$
where $[0]$ denotes the zero matrix of order
$(2\,n_G)\times (2\,n_G)$. Hence, its determinant and signature are
$$
\det\big( H_{P_0}(\Upsilon_{x,\boldsymbol{\xi'}})  \big)
=-\varsigma(m_x)^2\,\left\|\boldsymbol{\nu}^\varphi\right\|_\varphi^2\,
\det(Z_{\boldsymbol{\nu}^{\varphi}})^2,\quad
\mathrm{sign}\big( H_{P_0}(\Upsilon_{x,\boldsymbol{\xi'}})   \big)=0.
$$
In particular, $P_0$ is a non-degenerate critical point.
\end{lem}
Integrating by parts in $\mathrm{d}\boldsymbol{\gamma}$ shows that the asymptotics of 
(\ref{eqn:Ikxi}) are unchanged, if the integrand is multiplied by a cut-off function
in $\boldsymbol{\xi}''$, compactly supported and identically equal to $1$ near the origin.

We can apply the stationary phase Lemma. Recalling 
(\ref{eqn:Cxk})-(\ref{eqn:P_0}),  
we obtain for (\ref{eqn:Ikxi}) 
an asymptotic expansion of the form
\begin{eqnarray}
\label{eqn:Ikxi-expanded}
\mathcal{I}_{x,k}\left( \boldsymbol{\xi}' ;\mathbf{v}_j,\mathbf{w}_j \right)
&\sim&\gamma \left( k^{-\epsilon}\,\boldsymbol{\xi}'  \right)\cdot 
\left( \frac{2\,\pi}{k^{1/2}}  \right)^{1+d_G-r_G}\cdot 
\frac{e^{\frac{1}{\varsigma(m_x)}\,\,\left[ 
 \psi_2(\mathbf{w}_1,\mathbf{w}_2)  -\frac{1}{2}\,\|\mathbf{v}_1-\mathbf{v}_2\|_{m_x}^2 
\right]}}{\varsigma(m_x)\,\left\|\boldsymbol{\nu}^\varphi\right\|_\varphi\,
\det(Z_{\boldsymbol{\nu}^{\varphi}})}\nonumber\\
&&\cdot \frac{k^d}{\varsigma(m_x)^d\,\pi^d}\, e^{\frac{1}{\varsigma(m_x)}\,\left[ 
\imath\,\omega_{m_x}\big( \boldsymbol{\xi}_M'(m),\mathbf{v}_1+\mathbf{v}_2\big)
-\frac{1}{2}\,\|    \boldsymbol{\xi}'_M(m_x)  \|_{m_x}^2       \right]}\nonumber\\
&&\cdot\left[ 1+\sum_{j\ge 1}k^{-j/2}\,P'_j(m_x;\boldsymbol{\xi}',\mathbf{v}_j,\mathbf{w}_j)  \right],
\end{eqnarray}
where again $P_j(m_x;\cdot,\cdot,\cdot)$ is a polynomial of degree $\le 3\,j$ and
parity $j$.
We have replaced 
$\|  \boldsymbol{\xi}'_X(x) \|_x$ by $\|  \boldsymbol{\xi}'_M(m_x)  \|_{m_x}$
in view of the fact that $\langle\Phi_M(m_x),\boldsymbol{\xi}'\rangle =0$
since $\boldsymbol{\xi}'\in \mathfrak{t}^{\perp_\varphi}$,
so that $\boldsymbol{\xi}_X'(x) =\boldsymbol{\xi}'_M(m_x)^\sharp$.

The final expansion is obtained by inserting 
(\ref{eqn:Ikxi-expanded}) in (\ref{eqn:foi+kir1kvarphiG/Titerated}) and
integrating term by term. 
The front cut-off, in view of the Gaussian type exponential, may be omitted
without affecting the asymptotics.
The $j$-th summand in
(\ref{eqn:Ikxi-expanded}), $j\ge 0$, contributes by a factor given by 
the
Gaussian type integral
\begin{equation}
\label{eqn:jth-factor}
k^{-j/2}\,\int_{\mathfrak{t}_{\boldsymbol{\nu}}}\,\mathrm{d}^\varphi\boldsymbol{\xi}'\,
\left[e^{\frac{1}{\varsigma(m_x)}\,\left[ 
\imath\,\omega_{m_x}\big( \boldsymbol{\xi}_M'(m_x),\mathbf{v}_1+\mathbf{v}_2\big)
-\frac{1}{2}\,\big\|    \boldsymbol{\xi}'_M(m_x) \big \|_{m_x}^2       \right]} \,
P'_j(m_x;\boldsymbol{\xi}',\mathbf{v}_j,\mathbf{w}_j)\right],
\end{equation}
where we set $P_0=1$.

We compute the leading order term. Recall that we have fixed an orthonormal basis of
$\mathfrak{t}_m'=\mathfrak{t}_{\boldsymbol{\nu}}\cong \mathbb{R}^{r_G-1}$ (\ref{eqn:reductiontmtm'}).
Let $D^\varphi(m_x)$ be as in Definition \ref{defn:Dphim}, and let
$P^\varphi(m_x)$ denote its positive definite square root. 
Furthermore, by Lemma \ref{lem:normal bundle} there exist unique $\boldsymbol{\upsilon}_j\in 
\mathfrak{t}_{\boldsymbol{\nu}}$ such that
$\mathbf{v}_j=J_{m_x}\big( \boldsymbol{{\upsilon}_j}_M(m_x)  \big)$.
Let $\langle\cdot,\cdot\rangle_{st}$ denote the standard scalar product on
$\mathbb{R}^{r_G-1}$,
then
\begin{eqnarray}
\label{eqn:upsilon-Dphi}
\lefteqn{\varsigma(m_x)^{-1}\,
\omega_{m_x}\big( \boldsymbol{\xi}_M'(m),\mathbf{v}_1+\mathbf{v}_2\big)}\\
&=&
\varsigma(m_x)^{-1}\,\omega_{m_x}\Big( \boldsymbol{\xi}_M'(m_x),
J_{m_x}\big( \boldsymbol{{\upsilon}_1}_M(m_x)  \big)+
J_{m_x}\big( \boldsymbol{{\upsilon}_2}_M(m_x)  \big)\Big)\nonumber\\
&=&\varsigma(m_x)^{-1}\, \rho^M_{m_x}\Big( \boldsymbol{\xi}_M'(m_x),
{\boldsymbol{\upsilon}_1}_M(m_x)  +
 {\boldsymbol{\upsilon}_2}_M(m_x)  \Big)     \nonumber\\
 &=& \varsigma(m_x)^{-1}\, {\boldsymbol{\xi}'}^T\, D^\varphi(m_x)\,(  \boldsymbol{\upsilon}_1+
 \boldsymbol{\upsilon}_2)\nonumber\\
 &=&\left\langle\varsigma(m_x)^{-1/2}\,  P^\varphi(m_x)\,\boldsymbol{\xi}',
\varsigma(m_x)^{-1/2}\, P^\varphi(m_x)\,( \boldsymbol{\upsilon}_1+
 \boldsymbol{\upsilon}_2) \right\rangle_{st}.                      \nonumber
\end{eqnarray}
Similarly,
if $\|\cdot\|$ is the standard Euclidean norm then

\begin{eqnarray}
\label{eqn:normsquarexiM}
\varsigma(m_x)^{-1}\,\left\|    \boldsymbol{\xi}'_M(m_x)  \right\|_{m_x}^2    =
\left\|    \varsigma(m_x)^{-1/2}\,P^\varphi(m_x)\,\boldsymbol{\xi} '\right\|^2.
\end{eqnarray}
Hence, setting 
$\boldsymbol{\eta}=\varsigma(m_x)^{-1/2}\,P^\varphi(m_x)\,\boldsymbol{\xi}' $,
we obtain
\begin{eqnarray}
\label{eqn:change of variable gaussian}
\lefteqn{  \int_{\mathfrak{t}_{\boldsymbol{\nu}}}\,\mathrm{d}^\varphi\boldsymbol{\xi}'\,
\left[e^{\frac{1}{\varsigma(m_x)}\,\left[ 
\imath\,\omega_{m_x}\big( \boldsymbol{\xi}_M'(m_x),\mathbf{v}_1+\mathbf{v}_2\big)
-\frac{1}{2}\,\big\|    \boldsymbol{\xi}'_M(m_x)  \big\|_{m_x}^2       \right]} \,
\right]  }\nonumber\\
&=&\frac{\varsigma(m_x)^{\frac{r_G-1}{2}}}{\det\left(P^\varphi(m_x)\right)}\,
\int_{\mathbb{R}^{r_G-1}}\,\mathrm{d}\boldsymbol{\eta}\,
\left[e^{ 
\imath\, \left\langle\boldsymbol{\eta},
\varsigma(m_x)^{-1/2}\, P^\varphi(m_x)\,( \boldsymbol{\upsilon}_1+
 \boldsymbol{\upsilon}_2) \right\rangle_{st}
-\frac{1}{2}\,\|    \boldsymbol{\eta} \|^2       } \,
\right] \nonumber\\
&=&\frac{(2\,\pi)^{\frac{r_G-1}{2}}\,\varsigma(m_x)^{\frac{r_G-1}{2}}}{\mathcal{D}^\varphi (m)}\,
e^{-\frac{1}{2\,\varsigma(m_x)}\,\|\mathbf{v}_1+\mathbf{v}_2\|_m^2}.
\end{eqnarray}
Plugging (\ref{eqn:change of variable gaussian}) into (\ref{eqn:Ikxi-expanded}) and then
in (\ref{eqn:foi+kir1kvarphiG/Titerated}) we obtain the leading order term
in the statement of the Theorem. The other terms can be handled similarly.

\end{proof}

\end{document}